\DeclarePairedDelimiter\floor{\lfloor}{\rfloor}
\newtheorem{theorem}{Theorem}[section]
\newtheorem{lemma}[theorem]{Lemma}
\newtheorem{corollary}[theorem]{Corollary}
\newtheorem{proposition}[theorem]{Proposition}
 \theoremstyle{definition}
 \newtheorem{definition}[theorem]{Definition}
 \newtheorem{remark}[theorem]{Remark}
 \newtheorem{example}[theorem]{Example}
\newtheorem{question}{Question}
\numberwithin{equation}{section}
\newcommand {\N}{\mathbb{N}} 
\newcommand {\Z}{\mathbb{Z}} 
\newcommand {\R}{\mathbb{R}} 
\newcommand{\WW}{\mathcal{W}}
 \newcommand{\Ham}{\mathrm{Ham}}
\newcommand{\diam}{\mathrm{diam}}
\DeclareMathOperator{\CA}{CA}
\DeclareMathOperator{\Sym}{Sym}
\DeclareMathOperator{\Id}{Id}
\DeclareMathOperator{\Map}{Map}
\begin{document}
\title[Strongly sofic monoids]{Strongly sofic monoids, sofic topological entropy, and surjunctivity}
\author[T.Ceccherini-Silberstein]{Tullio Ceccherini-Silberstein}
\address{Dipartimento di Ingegneria, Universit\`a del Sannio, I-82100 Benevento, Italy}
\address{Istituto Nazionale di Alta Matematica ``Francesco Severi'', I-00185 Rome, Italy}
\email{tullio.cs@sbai.uniroma1.it}
\author[M.Coornaert]{Michel Coornaert}
\address{Universit\'e de Strasbourg, CNRS, IRMA UMR 7501, F-67000 Strasbourg, France}
\email{michel.coornaert@math.unistra.fr}
\author[X.K.Phung]{Xuan Kien Phung}
\address{D\'epartement d'Informatique et de Recherche Op\'erationnelle, Universit\'e de Montr\'eal, Montr\'eal, Qu\'ebec, H3T 1J4, Canada}
\address{D\'epartement de Math\'ematiques et de Statistique, Universit\'e de Montr\'eal, Montr\'eal, Qu\'ebec, H3T 1J4, Canada}
\email{phungxuankien1@gmail.com}
\subjclass[2020]{37B10, 37B40, 37B15, 20F65, 20M25, 16S36, 68Q80}
\keywords{Monoid, cellular automaton, surjunctive monoid, strongly sofic monoid, sofic topological entropy}

\begin{abstract}
We introduce the class of strongly sofic monoids. 
This class of monoids strictly contains the class of sofic groups and is a proper subclass of the class of sofic monoids.  
We define and investigate sofic topological entropy for actions of strongly sofic monoids on compact spaces.
We show that sofic topological entropy is a topological conjugacy invariant for such actions
and use this fact to prove that every strongly sofic monoid is surjunctive.
This means that if $M$ is a strongly sofic monoid and $A$ is a finite alphabet set,
then every injective cellular automaton $\tau \colon A^M \to A^M$ is surjective.
As an application, we prove that the monoid algebra of a strongly sofic monoid with coefficients in an arbitrary field is always stably finite. Our results are extensions to strongly sofic monoids of two previously known properties of sofic groups.  
The first one is the celebrated Gromov-Weiss theorem asserting that every sofic group is surjunctive.  
The second is the Elek-Szab\'o theorem which says that group algebras of sofic groups satisfy Kaplansky's stable finiteness conjecture. 
\end{abstract}
\date{\today}
\maketitle

\setcounter{tocdepth}{1}
\tableofcontents

\section{Introduction}
Sofic groups were introduced by Gromov~\cite{gromov-esav} and Weiss~\cite{weiss-sgds}
twenty-five years ago (see for example \cite[Chapter~7]{csc-cag2} for a detailed exposition).
Roughly speaking, a sofic group is a group that can be well approximated by finite symmetric groups.
There are many groups, such as all linear groups and all residually amenable groups, which are known to be sofic, 
and also groups, such as Thompson groups $F$, $T$, and $V$, for which it is still unknown whether they are sofic or not.
In fact, the question of the existence of a non-sofic group is open up to now.
In~\cite{sofic-monoids}, the first two authors investigated a notion of soficity for monoids,
extending the one for groups in the sense that
a group is sofic if and only if it is sofic with respect to its underlying monoid structure.
For defining sofic monoids, one replaces finite symmetric groups by finite symmetric monoids
(the \emph{symmetric monoid} of a set $X$ is the set $\Map(X)$ of all self-mappings of $X$ with the composition of maps as the monoid operation).
It was shown in~\cite{sofic-monoids} that all finite monoids, all commutative monoids, all cancellative one-sided amenable monoids, and all residually finite monoids are sofic (see~\cite{kambites-large-class} for other examples of sofic monoids).
The \emph{bicyclic monoid}, that is, the monoid with presentation $B = \langle p,q : pq = 1 \rangle$, is not sofic
\cite[Theorem~5.1]{sofic-monoids}. 
The bicyclic monoid is an amenable inverse monoid.
By contrast, every amenable group is sofic (see e.g.~\cite[Proposition~7.5.6]{csc-cag2})
and, as mentioned above, no example of a non-sofic group has been found up to now.
\par
Given a finite set $A$, called the \emph{alphabet}, a \emph{cellular automaton} over a monoid $M$ is a map $\tau \colon A^M \to A^M$ which is both continuous (with respect to the prodiscrete topology on $A^M$) and $M$-equivariant (with respect to the shift action of $M$ on $A^M$).
A monoid $M$ is called \emph{surjunctive} if every injective cellular automaton with finite alphabet over $M$ is surjective 
(see~\cite{surjunctive-monoids}).
Surjunctivity for groups was first investigated by Gottschalk in~\cite{gottschalk}.
The bicyclic monoid is known to be non-surjunctive~\cite[Theorem~5.5]{surjunctive-monoids}.
No example of a non-surjunctive group has been found yet. 
Actually, it has been shown by Gromov~\cite{gromov-esav} and Weiss~\cite{weiss-sgds} that every sofic group is surjunctive
(see also \cite{phung-geometric, phung-cjm, phung-weakly, phung-gottschalk} for some generalizations),
so that an example of a non-surjunctive group would also yield an example of a non-sofic group.
\par
In the present paper, we introduce the class of \emph{strongly sofic monoids}, a proper subclass of the class of sofic monoids 
(see Definition~\ref{d:strongly-sofic-monoid} and Corollary~\ref{c:ssofic-strictly-sofic}).
A group is strongly sofic as a monoid if and only if it is sofic as a group (Proposition~\ref{p:strongly-sofic-groups-monoids}).
As every submonoid of a strongly sofic monoid is itself strongly sofic (Proposition~\ref{p:submonoids-strongly-sofic}),
this implies that every monoid that can be embedded into a sofic group is strongly sofic.
It follows that all free monoids, all cancellative commutative monoids, and all cancellative one-sided amenable monoids are strongly sofic.
However, there exist right-cancellative strongly sofic monoids that are not left-cancellative and therefore not embeddable into a group 
(Example~\ref{ex:no-group}).
The class of strongly sofic monoids  is also closed under direct products, projective limits, and inductive limits.
\par
In \cite[Question (Q6)]{surjunctive-monoids}, it was asked whether or not the Gromov-Weiss theorem on the surjunctivity of sofic groups extends to all sofic monoids. In the present paper, we shall give a partial answer to this question by establishing the following result.

\begin{theorem}
\label{t:main}
Every strongly sofic monoid is surjunctive.
\end{theorem}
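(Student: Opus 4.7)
My plan is to follow the Gromov--Weiss entropy strategy, using the notion of sofic topological entropy for strongly sofic monoid actions announced in the abstract. Let $M$ be a strongly sofic monoid, fix a sofic approximation $\Sigma$ of $M$, let $A$ be a finite alphabet, and let $\tau \colon A^M \to A^M$ be an injective cellular automaton. Set $Y := \tau(A^M)$. Since $A^M$ is compact and $\tau$ is continuous and injective, $Y$ is a closed subset of $A^M$ and $\tau \colon A^M \to Y$ is a homeomorphism; because $\tau$ is $M$-equivariant, $Y$ is shift-invariant and $\tau$ realises a topological conjugacy between the shift action on $A^M$ and its restriction to $Y$. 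By the (to-be-established) topological conjugacy invariance of sofic topological entropy, one immediately obtains
\[
h_\Sigma(Y) \;=\; h_\Sigma(A^M).
\]

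The argument then reduces to two entropy comparisons. First, a direct count of ``almost-equivariant'' maps from the $\Sigma$-approximating sets into $A$ should give $h_\Sigma(A^M) = \log|A|$, since for the full shift every such function is admissible. Second, assuming for contradiction that $\tau$ is not surjective, we have $Y \subsetneq A^M$; by compactness there exist a finite $F \subseteq M$ and a pattern $p \in A^F$ that appears in no configuration of $Y$. The goal is then to establish the strict inequality
\[
h_\Sigma(Y) \;<\; \log|A|,
\]
which contradicts the displayed equality and forces $\tau$ to be surjective.

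The crux is precisely this strict drop, and it is where strong soficity should enter essentially. In the classical sofic-group setting, one uses a F{\o}lner-type pigeonhole inside each large approximating set to extract a positive-density family of almost-disjoint ``windows'' each shaped like $F$ on which the approximate action genuinely matches the $M$-action on $F$; the forbidden pattern $p$ then excludes a definite fraction of the $|A|^{d_i}$ choices at each window, yielding a multiplicative loss $\rho < 1$ per window and hence a strictly smaller exponential growth rate. Reproducing this combinatorial step over a monoid requires the approximating almost-action to respect the local structure of $M$ on a positive proportion of points \emph{simultaneously} for every element of a sufficiently large finite subset of $M$, which is exactly what the ``strong'' part of the definition of strongly sofic monoids should guarantee (contrast this with ordinary soficity, where the approximation is controlled only element by element, and where the bicyclic monoid obstruction of \cite{surjunctive-monoids} lives). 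I expect this counting-and-planting step to be the main technical obstacle; the remaining ingredients (the equivariant conjugacy, the evaluation of $h_\Sigma(A^M)$, and the conjugacy invariance itself) should follow the template from sofic groups essentially verbatim once the correct monoid-theoretic definitions have been set up.
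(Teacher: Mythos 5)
Your proposal follows essentially the same route as the paper: the paper's proof of Theorem~\ref{t:main} is exactly your reduction --- $\tau(A^M)$ is a subshift topologically conjugate to $(A^M,M)$, so conjugacy invariance of sofic topological entropy (Theorem~\ref{t:topo-invariant}) together with $h_\Sigma(A^M,M)=\log|A|$ and the strict inequality $h_\Sigma(X,M)<\log|A|$ for proper subshifts (Theorem~\ref{t:monotonicity}) forces surjectivity, and the bounded-fiber condition (SM4) enters precisely as you predict, to extract a positive-density family of pairwise disjoint windows $\sigma(F)(u)$ on which the forbidden pattern yields a definite multiplicative loss. The only small adjustments are that the entropy theory requires $\Sigma$ to be a \emph{strong} sofic approximation (not merely a sofic one) and $M$ infinite, the finite case being trivial and treated separately.
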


One says that a monoid $M$ is \emph{directly finite} if $mm' = 1_M$ implies $m'm = 1_M$ for all $m,m' \in M$.
Equivalently, a monoid is directly finite if and only if it contains no submonoid isomorphic to the bicyclic monoid.
Given a monoid $M$ and a finite set $A$, the set $\CA(M,A)$ consisting of all cellular automata $\tau \colon A^M \to A^M$ with the composition of maps as the monoid operation, is a monoid, with the identity map $\Id_{A^M} \colon A^M \to A^M$ as the identity element.
It is known that the monoid $\CA(M,A)$ is directly finite if and only if every injective cellular automaton $\tau \colon A^M \to A^M$ is surjective (cf.~\cite[Proposition~2.3]{ccp-stabmomo}).
Thus, as a consequence of Theorem~\ref{t:main}, we get the following:

\begin{corollary}
\label{c:CA-is-df}
Let $M$ be a strongly sofic monoid and let $A$ be a finite set.
Then the monoid $\CA(M,A)$ is directly finite.
In other words, the monoid $\CA(M,A)$ contains no submonoid isomorphic to the bicyclic monoid.
\end{corollary}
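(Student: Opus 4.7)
The plan is to deduce this corollary directly from Theorem~\ref{t:main} together with the equivalence cited in the paragraph immediately preceding the corollary. First, Theorem~\ref{t:main}, applied to the strongly sofic monoid $M$, ensures that every injective cellular automaton $\tau \colon A^M \to A^M$ with finite alphabet $A$ is automatically surjective. Second, by \cite[Proposition~2.3]{ccp-stabmomo}, the monoid $\CA(M,A)$ is directly finite if and only if every injective cellular automaton $\tau \colon A^M \to A^M$ is surjective. Chaining these two facts immediately yields direct finiteness of $\CA(M,A)$.

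For the reformulation as the absence of a bicyclic submonoid, I would invoke the standard characterization recalled in the excerpt: a monoid is directly finite precisely when it contains no submonoid isomorphic to the bicyclic monoid $B = \langle p, q : pq = 1 \rangle$. Indeed, the ``only if'' direction is clear because in $B$ one has $pq = 1_B$ while $qp \neq 1_B$, so a copy of $B$ inside a monoid produces a pair $(m, m')$ witnessing the failure of direct finiteness. Conversely, if $mm' = 1_M$ but $m'm \neq 1_M$, then the submonoid of $M$ generated by $m$ and $m'$ admits a surjective homomorphism from $B$ (by $p \mapsto m'$, $q \mapsto m$), and a short argument shows this homomorphism is in fact injective, so a bicyclic submonoid appears.

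There is essentially no obstacle here: the real content is already packaged in Theorem~\ref{t:main}, and the corollary is a one-step translation via the equivalence of \cite[Proposition~2.3]{ccp-stabmomo}. The only minor check is that the conventions used in that reference for $\CA(M,A)$ and its monoid structure (composition, with $\Id_{A^M}$ as identity) agree with those adopted here, which they do.
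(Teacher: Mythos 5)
Your proposal is correct and follows essentially the same route as the paper: Theorem~\ref{t:main} combined with the equivalence of \cite[Proposition~2.3]{ccp-stabmomo}, together with the standard identification of direct finiteness with the absence of a bicyclic submonoid, which the paper likewise simply invokes. One cosmetic slip: since $mm' = 1_M$, the homomorphism from $B = \langle p,q : pq = 1\rangle$ should send $p \mapsto m$ and $q \mapsto m'$ (so that $pq \mapsto mm' = 1_M$), not $p \mapsto m'$, $q \mapsto m$ as written.
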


A ring $R$ is called \emph{stably finite} if the multiplicative monoid  of $d \times d$ matrices with entries in $R$ is directly finite
for every integer $d \geq 1$. 
In~\cite{ccp-stabmomo}, it is shown that if $M$ is a surjunctive monoid then the monoid algebra $K[M]$ is stably finite for any field $K$.
As a consequence, we deduce from Theorem~\ref{t:main} the following result.

\begin{corollary}
\label{c:surj-monoid-stably-finite-alg}
Let $M$ be a strongly sofic monoid and let $K$ be a field.
Then the monoid algebra $K[M]$ is stably finite.
\end{corollary}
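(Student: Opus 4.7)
The plan is to chain together two facts, one proved earlier in this paper and one cited from \cite{ccp-stabmomo}. The core observation is that the corollary decomposes cleanly into \emph{(i)} surjunctivity of $M$, and \emph{(ii)} the passage from surjunctivity of the monoid to stable finiteness of its monoid algebra. Neither step requires any new argument, so the entire proof should reduce to citing the relevant results in the correct order.

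First, I would invoke Theorem~\ref{t:main} on the hypothesis that $M$ is strongly sofic to conclude that $M$ is surjunctive, that is, every injective cellular automaton $\tau \colon A^M \to A^M$ with finite alphabet $A$ is surjective. This is the substantive content and it is already established above.

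Next, I would appeal to the result from \cite{ccp-stabmomo} quoted in the paragraph immediately preceding the statement of Corollary~\ref{c:surj-monoid-stably-finite-alg}: if $M$ is surjunctive, then the monoid algebra $K[M]$ is stably finite for every field $K$. Combining this with the first step immediately yields that $K[M]$ is stably finite, completing the proof.

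There is essentially no obstacle: the corollary is a direct concatenation of Theorem~\ref{t:main} with a known implication. If one wanted a self-contained argument, the only nontrivial step would be to reprove the implication "surjunctive $\Rightarrow$ $K[M]$ stably finite," which in \cite{ccp-stabmomo} is carried out by encoding a putative failure of direct finiteness in the matrix algebra $\Mat_d(K[M])$, namely matrices $P,Q \in \Mat_d(K[M])$ with $PQ = I_d$ and $QP \ne I_d$, into an injective but non-surjective cellular automaton with alphabet a suitable finite subset of $K^d$, contradicting surjunctivity of $M$. Since this is precisely the result cited, I would not reproduce it but merely reference it.
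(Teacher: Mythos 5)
Your proposal is correct and coincides with the paper's own argument: the corollary is obtained exactly by combining Theorem~\ref{t:main} (strongly sofic $\Rightarrow$ surjunctive) with the result of \cite{ccp-stabmomo} that surjunctivity of $M$ implies stable finiteness of $K[M]$ for every field $K$. The additional sketch of how the cited implication is proved is not needed, just as in the paper.
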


In the group setting, Corollary~\ref{c:surj-monoid-stably-finite-alg} is due to Elek and Szab{\'o}~\cite[Corollary~4.7]{es-direct}
(see also \cite[Corollary 1.4]{cc-IJM}, \cite{cscp-model}, and \cite[Corollary~8.15.8]{csc-cag2}).
The question whether the group algebra $K[G]$ is stably finite for every group $G$ and any field $K$ is known as
\emph{Kaplansky's stable finiteness conjecture} and is open up to know,
although Kaplansky proved it for fields of characteristic~$0$.
By contrast, the monoid algebra $K[M]$ is clearly not stably finite, whatever the field $K$, if $M$ is a monoid containing a submonoid isomorphic to the bicyclic monoid.

\par
In order to establish Theorem~\ref{t:main}, we shall use the same strategy as the one developed by Kerr and Li~\cite[Theorem~10.30]{kerr-li-book} for proving the Gromov-Weiss theorem on the surjunctivity of sofic groups.
As the case when $M$ is finite is trivial, we start with an infinite strongly sofic monoid $M$.
Given a strong sofic approximation $\Sigma$ of $M$ (cf.\ Proposition~\ref{p:equiv-def-strongly-sofic} for
the definition of a strong sofic approximation), we define the \emph{sofic topological entropy} $h_\Sigma(X,M)$ for any compact topological space $X$ equipped with a continuous action of $M$ and admitting a dynamically generating continuous pseudometric
(this last condition is satisfied if $X \subset A^M$ is a subshift, or $X$ is metrizable).
We then show that sofic topological entropy is a topological conjugacy invariant for actions of strongly sofic monoids 
(see Theorem~\ref{t:topo-invariant}).
It follows that if $A$ is a finite set and $\tau \colon A^M \to A^M$ is an injective cellular automaton, then
$h_\Sigma(\tau(A^M),M) = h_\Sigma(A^M,M) = \log |A|$.
Using the fact that $h_\Sigma(X,M) < \log |A|$ for any proper subshift $X \subsetneqq A^M$ (Theorem~\ref{t:monotonicity}),
we deduce that $\tau(A^M) = A^M$. This shows that $\tau$ is surjective and completes the proof of the surjunctivity of $M$.
\par
The paper is organized as follows. 
In Section~\ref{s:preliminaries}, we fix notation and review some background material on monoids, monoid actions,
shifts and subshifts over monoids, cellular automata over monoids and surjunctivity, pseudometrics, 
symmetric monoids, Hamming metrics, and sofic monoids.
In Section~\ref{s:ssmonoids}, we introduce the notion of strong soficity for monoids. 
We first establish some closure properties for the class of strongly sofic monoids.
We show that the class of strongly sofic monoids lies strictly between the class of monoids embeddable into sofic groups and the class of sofic monoids (Corollary~\ref{c:embeddable-group}, Example~\ref{ex:no-group}, Proposition~\ref{p:strongly-sofic-implies-sofic}, and Corollary~\ref{c:ssofic-strictly-sofic}).
We also show that a finite monoid is strongly sofic if and only if it is a group (Corollary~\ref{c:finite-monoid-strong-s}).
In Section~\ref{s:topo-invar}, we introduce and study the notion of sofic topological entropy for sofic monoid actions.
Given a continuous action of a sofic monoid $M$ on a compact topological space $X$, we first define the sofic topological entropy
$h_\Sigma(M,X,\rho)$ relative to a sofic approximation $\Sigma$ of $M$ and a continuous pseudometric $\rho$ on $X$.
Then we show that when $M$ is an infinite strongly sofic monoid, $\Sigma$ is a strong sofic approximation of $M$,
and $(X,M)$ admits a dynamically generating continuous pseudometric $\rho$, then the sofic topological entropy
$h_\Sigma(X,M) \coloneqq h_\Sigma(M,X,\rho)$ is in fact independent of $\rho$ (Proposition~\ref{p:h-rho-h-rho'}).
In Section~\ref{s:ste-ss}, we study sofic topological entropy for subshifts $X \subset A^M$, where $A$ is a finite set and
$M$ is an infinite strongly sofic monoid.
We show that, for any strong sofic approximation $\Sigma$ of $M$, one has 
$h_\Sigma(X,M) \leq \log |A|$, with  equality if and only if $X = A^M$ (Theorem~\ref{t:monotonicity}).
This is then used to prove Theorem~\ref{t:main}, as explained above.

\section{Preliminaries}
\label{s:preliminaries}

\subsection{Notation} 
We use the convention $\N = \{0,1, \ldots \}$ for the set of natural numbers.
The symbol $\Z$ denotes the set of integers $\{\dots,-2,-1,0,1,2,\dots\}$.
\par
The cardinality of a finite set $D$ is written $|D|$.
\par 
Given a real number $\alpha$, we denote by $\floor{\alpha} \in \Z$ (resp. $\lceil \alpha \rceil \in \Z$)
the greatest integer less (the least integer greater) than or equal to $\alpha$.
Note that $n - \lceil \alpha \rceil = \floor{n - \alpha}$ for all $n \in \Z$ and $\alpha \in \R$.
\par
Given a set $D$, we denote by $\Map(D)$ the set of all maps $f \colon D \to D$. 
We denote by $\Id_D \in \Map(D)$ the identity map of $D$, that is, $\Id_D(v) \coloneqq v$ for all $v \in D$.
Given subsets $S \subset \Map(D)$ and $D_0 \subset D$, we set
\[
S(D_0) \coloneqq  \{f(v) \colon f \in S, v \in D_0\} \subset D 
\] 
and
\begin{equation}
\label{e:inverse-set-map}
S^{-1}(D_0) \coloneqq  \bigcup_{f \in S, v \in D_0} f^{-1}(v) \subset D,
\end{equation}
where, as usual, $f^{-1}(v) \coloneqq \{u \in D: f(u) = v\} \subset D$ for all $v \in D$.
Note that if $\Id_D \in S$, then $S(D_0) \supset D_0$ (resp.\ $S^{-1}(D_0) \supset D_0$).
\par
Also, given a set $M$ and a map $\sigma \colon M \to \Map(D)$, in order to simplify notation, 
for $s \in M$ and $v \in D$ we shall simply write $\sigma(s)(v)$ instead of $(\sigma(s))(v)$.

\subsection{Monoids}
A \emph{monoid} is a semigroup admitting an identity element. 
Thus, a monoid is a set $M$ equipped with a binary  operation $M \times M \to M$, $(m_1,m_2) \mapsto m_1 m_2$,
which is \emph{associative}, i.e., such that  $(m_1 m_2) m_3 = m_1(m_2 m_3)$ for all $m_1,m_2,m_3 \in M$, and  
admits a (necessarily unique) element $1_M \in M$ such that $1_M m = m 1_M = m$ for all $m \in M$.
\par
Let $M$ be a monoid.
An element $e \in M$ such that $e^2 = e$ is called an \emph{idempotent}.
The identity element $1_M$ of $M$ is clearly an idempotent of $M$.
An idempotent $e$ of $M$ is a \emph{non-trivial idempotent} if $e \not= 1_M$.
\par
An element $a \in M$ such that $am = a$ (resp.\ $ma = a$) for all $m \in M$
is called \emph{left-absorbing} (resp.\ \emph{right-absorbing}). An element which is both left- and right-absorbing
is termed \emph{absorbing}, or a \emph{zero}. Note that there could be distinct left-absorbing (resp.\ right-absorbing)
elements, for instance if $D$ is a set, any constant map is left-absorbing in the monoid $\Map(D)$ consisting of all
maps $f \colon D \to D$ (cf.\ Section \ref{s:symm-monoids-hamming}).
On the other hand, it is clear that in any monoid there is at most one zero element.
\par
Given two monoids $M$ and $M'$, one says that a map $f \colon M \to M'$ is a \emph{monoid morphism} if
$f(1_M) = 1_{M'}$ and $f(m_1m_2) = f(m_1)f(m_2)$ for all $m_1,m_2 \in M$.
The category whose objects are monoids and arrows are monoid morphisms between them is a subcategory of the category of sets.
Note that the category of groups is a full subcategory of the category of monoids.
A \emph{submonoid} of a monoid $M$ is a subset $N \subset M$ such that $1_M \in N$ and $m m' \in N$ for all $m, m' \in N$.
If $N \subset M$ is a submonoid, then $N$ inherits from $M$ a monoid structure obtained by restricting to $N$ the monoid operation on $M$.
One says that a monoid $M$ is \emph{embeddable} into a monoid $M'$ if there exists an injective monoid morphism $M \to M'$.
This amounts to saying that there exists a submonoid of $M'$ which is isomorphic to $M$.

\subsection{Monoid actions}
An \emph{action} of a monoid $M$ on a set $X$ is a map $M \times X \to X$, $(m,x) \mapsto mx$, such that
$m_1(m_2x) = (m_1m_2)x$ and $1_M x = x$ for all $m_1,m_2 \in M$ and $x \in X$.
\par
Let $X$ be a set equipped with an action of a monoid $M$. 
One says that a subset $Y \subset X$ is \emph{$M$-invariant} if one has $my \in Y$ for all $m \in M$ and $y \in Y$.
\par
In the case when the set $X$ is equipped with a topology, one says that the action of $M$ on $X$ is \emph{continuous} if the map $X \to X$, given by $x \mapsto m x$, is continuous for every $m \in M$.
\par
If $M$ is a monoid acting on two sets $X$ and $X'$, one says that a map $f \colon X \to X'$ is \emph{$M$-equivariant} if one has 
$f(mx) = mf(x)$ for all $m \in M$ and $x \in X$.
In the case when $X$ and $X'$ are topological spaces and the actions of $M$ on $X$ and $X'$ are continuous,
one says that a map $f \colon X \to X'$ is a \emph{topological conjugacy} if $f$ is an $M$-equivariant homeomorphism.
One says that the dynamical systems $(X,M)$ and $(X',M)$ are \emph{topologically conjugate} if there exists a topological conjugacy 
$f \colon X \to X'$.

\subsection{Shifts and subshifts}
Let $M$ be a monoid, called the \emph{universe}, and let $A$ be a finite set, called the \emph{alphabet} or the \emph{set of symbols}.
Consider the set $A^M$ consisting of all maps $x \colon M \to A$.
The elements of $A^M$ are called the \emph{configurations} over the monoid $M$ and the alphabet $A$.
We equip $A^M$ with its \emph{prodiscrete topology}, i.e., the product topology obtained by taking the discrete topology on every factor $A$ of $A^M = \prod_{m \in M} A$.
Since any product of Hausdorff (resp.~totally disconnected, resp.~compact) spaces is itself
Hausdorff (resp.~totally disconnected, resp.~compact), the configuration space $A^M$ is Hausdorff, totally disconnected, and compact.  
We also equip $A^M$ with the action of $M$ defined by $(m,x) \mapsto mx  \coloneqq  x \circ R_m$
for all $m \in M$ and $x \in A^M$, where $R_m \colon M \to M$ denotes the right multiplication by $m$.
Thus, the configuration $m x$ is given by $mx(m') = x(m' m)$ for all  $m' \in M$.
This action of $M$ on $A^M$ is clearly continuous. It is called the $M$-\emph{shift}, or simply the \emph{shift}, on $A^M$.
\par
A closed $M$-invariant subset $X \subset A^M$ is called a \emph{subshift}.
\par
Let $F \subset M$ be a finite subset. A map $p \colon F \to M$, that is, an element $p \in A^F$ is called a \emph{pattern}.
The set $F$ is then called the \emph{support} of $p$.
Given a configuration $x \in A^M$, we denote by $x\vert_F \in A^F$ the \emph{restriction} of $x$ to $F$, that is, 
the pattern defined by setting $x\vert_F(m) \coloneqq x(m)$ for all $m \in F$.
Suppose that $X \subset A^M$ is a subshift. We set $X_F \coloneqq \{x\vert_F: x\in X\} \subset A^F$. 
On often refers to $X_F$ (resp.\ $A^F \setminus X_F$) 
as to the set of all $X$-\emph{admissible} (resp.\ $X$-\emph{forbidden}) patterns with \emph{support} $F$.

\subsection{Cellular automata and surjunctivity}
Let $M$ be a monoid and let $A$ be a finite set. We equip the set $A^M$ of configurations with the prodiscrete topology
and the $M$-shift. Let $X, Y \subset A^M$ be two subshifts.
A map $\tau \colon X \to Y$ which is continuous and $M$-equivariant is called a \emph{cellular automaton}.

The following algebraic characterization of cellular automata on monoids can be easily deduced from
\cite[Theorem~4.6]{surjunctive-monoids} 
(cf.\, in the group case, the classical Curtis–Hedlund–Lyndon theorem \cite[Theorem~1.8.1]{csc-cag2} and \cite[Exercise~1.41]{ECAG}).

\begin{theorem}
\label{t:CHL}
Let $M$ be a monoid, let $A$ be a finite set, and let $X,Y \subset A^M$ be two subshifts. 
Let $\tau \colon X \to Y$ be a map.
Then the following conditions are equivalent:
\begin{enumerate}[{\rm (a)}]
\item $\tau$ is a cellular automaton;
\item there exists a finite subset $S \subset M$ and a map $\mu \colon A^S \to A$
such that
\begin{equation}
\label{e:local-def-map}
\tau(x)(m) = \mu((mx)\vert_S)
\end{equation}
for all $x \in X$ and $m \in M$.
\end{enumerate}
\end{theorem}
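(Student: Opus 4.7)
The plan is to prove the two implications separately, following the standard strategy of the Curtis--Hedlund--Lyndon theorem, suitably adapted to the monoid setting.

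For the implication (b) $\Rightarrow$ (a), I would verify equivariance and continuity directly from the local formula \eqref{e:local-def-map}. Equivariance reduces to the identity $\tau(mx)(m') = \mu(((m'm)x)\vert_S) = \tau(x)(m'm) = (m\tau(x))(m')$, where the middle equality uses the associativity of the shift action, $(m'(mx))(s) = x(s m'm) = ((m'm)x)(s)$. For continuity, note that $(mx)\vert_S$ is entirely determined by the restriction $x\vert_{Sm}$, so each coordinate $\tau(x)(m)$ depends on only finitely many coordinates of $x$; by the universal property of the product topology on $A^M$, this makes $\tau$ continuous.

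For the harder implication (a) $\Rightarrow$ (b), the plan is first to produce a single finite ``memory set'' $S \subset M$ such that $\tau(x)(1_M)$ depends only on $x\vert_S$ for every $x \in X$. To this end, fix $x \in X$; continuity of $\tau$ together with the fact that the evaluation at $1_M$ is continuous produces a finite $S_x \subset M$ and a cylindrical open neighborhood $U_x \coloneqq \{x' \in X : x'\vert_{S_x} = x\vert_{S_x}\}$ on which $\tau(\cdot)(1_M)$ is constant. Compactness of $X$ allows one to extract a finite subcover $U_{x_1},\dots,U_{x_n}$, and then $S \coloneqq S_{x_1} \cup \cdots \cup S_{x_n}$ has the required property. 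This yields a well-defined map $\mu_0 \colon X_S \to A$ given by $\mu_0(x\vert_S) \coloneqq \tau(x)(1_M)$, which I would extend arbitrarily to a map $\mu \colon A^S \to A$.

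It then remains to promote the formula at the identity to all of $M$ via equivariance. The key identity is $\tau(x)(m) = (m\tau(x))(1_M) = \tau(mx)(1_M)$, the first equality being the definition of the shift action and the second being $M$-equivariance of $\tau$. Combining this with the definition of $\mu_0$ gives $\tau(x)(m) = \mu_0((mx)\vert_S) = \mu((mx)\vert_S)$, which is precisely \eqref{e:local-def-map}.

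The only place where one might worry about the passage from groups to monoids is the compactness argument in step~1 and the concluding identity in step~3; neither requires inverses, since the formula $\tau(x)(m) = \mu((mx)\vert_S)$ is phrased entirely in terms of the left shift action $x \mapsto mx$, which is defined for any monoid. The main conceptual point, and the only mildly delicate one, is the harmless extension of $\mu_0$ from $X_S$ to $A^S$: the statement does not require $\mu$ to be canonical, only to exist, so any extension works.
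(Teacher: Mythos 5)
Your proof is correct. Note that the paper does not actually prove Theorem~\ref{t:CHL}; it only remarks that the statement ``can be easily deduced'' from Theorem~4.6 of the cited paper on surjunctive monoids, so your argument is a self-contained substitute rather than a parallel of an in-text proof. What you give is the standard Curtis--Hedlund--Lyndon argument: for (b)~$\Rightarrow$~(a) the identity $m'(mx)=(m'm)x$ and the fact that $(mx)\vert_S$ depends only on $x\vert_{Sm}$ give equivariance and continuity, and for (a)~$\Rightarrow$~(b) continuity of $x\mapsto\tau(x)(1_M)$ into the discrete set $A$ plus compactness of the closed subset $X\subset A^M$ yield a common finite memory set $S$, after which the relation $\tau(x)(m)=(m\tau(x))(1_M)=\tau(mx)(1_M)$ (using $M$-invariance of $X$) promotes the formula from $1_M$ to all of $M$; as you observe, no inverses are used anywhere, and the arbitrary extension of $\mu_0$ from $X_S$ to $A^S$ is harmless since the theorem only asserts existence of some local defining map.
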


If $S \subset M$ and $\mu \colon A^S \to A$ are as in the statement of the above theorem, one
says that $S$ is a \emph{memory set} for $\tau$ and that $\mu$ is an associated 
\emph{local defining map}. Note that if $X = A^M$, then $\mu$ is entirely determined by $\tau$ and $S$, 
since the restriction map $A^M \to A^S$ is surjective and $\mu(x\vert_S) = \tau(x)(1_M)$ 
for all $x \in A^M$ by \eqref{e:local-def-map}.
\par
Since the composition of continuous (resp.\ $M$-equivariant) maps is continuous (resp.\ $M$-equivariant),
the set $\CA(M,A)$ consisting of all cellular automata $\tau \colon A^M \to A^M$
is a monoid for the composition of maps, the identity element being the identity cellular automaton $\Id_{A^M} \colon A^M \to A^M$.
\par

The following definition was given in \cite[Definition~5.1]{surjunctive-monoids}. It extends to monoids the analogous
notion of surjunctivity for groups given by Gottschalk \cite{gottschalk} (see also \cite[Chap.~3]{csc-cag2}).

\begin{definition}
A monoid $M$ is termed \emph{surjunctive} provided the following holds: for any finite set $A$, every injective cellular automaton
$\tau \colon A^M \to A^M$ is surjective (and therefore bijective).
\end{definition}

In \cite{surjunctive-monoids} it is shown that all finite monoids, all finitely generated commutative
monoids, all cancellative commutative monoids, all residually finite monoids, all finitely
generated linear monoids, and all cancellative one-sided amenable monoids are surjunctive.
It is also shown in~\cite{surjunctive-monoids} that the bicyclic monoid $B$ and, more generally, 
all monoids containing a submonoid  isomorphic to $B$ are non-surjunctive.

\subsection{Pseudometrics}
Let $X$ be a set. A \emph{pseudometric} on $X$ is a  map $\rho \colon X \times X \to \R$ such that
$\rho(x,x) = 0$,  $\rho(x,y) = \rho(y,x)$, and $\rho(x,y) \leq \rho(x,z) + \rho(z,y)$ for all $x,y,z \in X$.
These properties imply that $\rho(x,y) \geq 0$ for all $x,y \in X$.
A pseudometric $\rho \colon X \times X \to \R$ is a \emph{metric} if, in addition,
$\rho(x,y) = 0$ implies $x = y$ for all $x,y \in X$.
\par
Given a pseudometric $\rho$ on a set $X$ and a real number $\varepsilon > 0$,
the \emph{open $(\rho,\varepsilon)$-ball} around a point $x \in X$
is the set $B_x(\varepsilon)$ consisting of all $y \in X$ such that $\rho(x,y) < \varepsilon$.
A subset $Z \subset X$ is called $(\rho,\varepsilon)$-\emph{separated} if $\rho(x,y) \geq \varepsilon$ for all distinct $x,y \in Z$.
Note that $Z \subset X$ is $(\rho,\varepsilon)$-separated if and only if $Z \cap B_z(\varepsilon) = \{z\}$ for every $z \in Z$.

\begin{lemma}
\label{l:separated-compact}
Let $X$ be a compact topological space, let $\rho \colon X \times X \to \R$ be a continuous pseudometric on $X$, and let $\varepsilon > 0$. 
Then there exists an integer $N \geq 1$ such that every $(\rho,\varepsilon)$-separated subset 
$Z \subset X$ is finite with cardinality $|Z| \leq N$.
\par
Moreover, denoting by $N_\varepsilon(X,\rho)$ the maximal cardinality of a $(\rho,\varepsilon)$-separated subset $Z \subset X$, one has
\begin{equation}
\label{e:N-eps-dec}
N_{\varepsilon'}(X,\rho) \geq N_\varepsilon(X,\rho)
\end{equation} 
for all $0 < \varepsilon' \leq \varepsilon$. 
\end{lemma}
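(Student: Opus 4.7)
The plan is to use a standard compactness-plus-triangle-inequality argument to bound the size of any $(\rho,\varepsilon)$-separated subset.

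First, I would verify that for every $x \in X$ and every $\delta > 0$, the open $(\rho,\delta)$-ball $B_x(\delta)$ is an open subset of $X$: this is immediate from continuity of $\rho$, since $B_x(\delta)$ is the preimage of the open set $(-\infty,\delta)$ under the continuous map $y \mapsto \rho(x,y)$. Next, since the family $\{B_x(\varepsilon/2) : x \in X\}$ is an open cover of the compact space $X$, I would extract a finite subcover, say $X = B_{x_1}(\varepsilon/2) \cup \cdots \cup B_{x_N}(\varepsilon/2)$ for some points $x_1,\ldots,x_N \in X$. I claim this integer $N$ works. Indeed, let $Z \subset X$ be $(\rho,\varepsilon)$-separated. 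For each index $i$, I would observe that the intersection $Z \cap B_{x_i}(\varepsilon/2)$ contains at most one point: if $y_1, y_2$ lay in this intersection, the triangle inequality would give $\rho(y_1,y_2) \leq \rho(y_1,x_i) + \rho(x_i,y_2) < \varepsilon/2 + \varepsilon/2 = \varepsilon$, forcing $y_1 = y_2$ by the separation assumption. Since the balls $B_{x_i}(\varepsilon/2)$ cover $X$ and hence cover $Z$, we conclude that $|Z| \leq N$, as desired.

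For the second assertion, I would simply note that the condition of being $(\rho,\varepsilon)$-separated is monotone in $\varepsilon$: any $(\rho,\varepsilon)$-separated subset $Z \subset X$ is automatically $(\rho,\varepsilon')$-separated whenever $0 < \varepsilon' \leq \varepsilon$, because $\rho(x,y) \geq \varepsilon \geq \varepsilon'$ for all distinct $x,y \in Z$. Applying this to a $(\rho,\varepsilon)$-separated subset of maximal cardinality $N_\varepsilon(X,\rho)$ yields a $(\rho,\varepsilon')$-separated subset of the same cardinality, hence $N_{\varepsilon'}(X,\rho) \geq N_\varepsilon(X,\rho)$.

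No step here looks genuinely difficult; the only subtlety worth double-checking is that the well-definedness of $N_\varepsilon(X,\rho)$ as a maximum (and not merely a supremum) follows from the finiteness bound established in the first part, so the finite bound must be proved before the monotonicity statement can even be formulated. The whole argument is essentially the standard proof that a totally bounded pseudometric space has finite $\varepsilon$-packing number, adapted to the continuous-pseudometric-on-compact-space setting.
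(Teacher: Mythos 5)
Your proof is correct and follows essentially the same route as the paper: cover $X$ by open $(\rho,\varepsilon/2)$-balls, extract a finite subcover by compactness, and use the triangle inequality to see that each such ball meets a $(\rho,\varepsilon)$-separated set in at most one point, with the monotonicity in $\varepsilon$ handled exactly as in the paper.
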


\begin{proof}
For $x \in X$, denote by $U_x$ the set of all points $y \in X$ such that $\rho(x,y) < \varepsilon/2$.
Observe that $U_x$ is an open subset of $X$ by continuity of $\rho$, and that $x \in U_x$ since $\rho(x,x) = 0$.
Thus, the family $(U_x)_{x \in X}$ is an open cover of $X$.
By compactness of $X$, there exists a finite subset $S \subset X$ such that the subfamily $(U_x)_{x \in S}$ covers $X$.
Observe now that if $Z \subset X$ is a $(\rho,\varepsilon)$-separated subset and $x \in X$,
then there is at most one point of $Z$ in $U_x$.
This implies $|Z| \leq |S|$.
\par
The last assertion follows from the fact that if $0 < \varepsilon' \leq \varepsilon$ then every 
$(\rho,\varepsilon)$-separated subset $Z \subset X$ is, \emph{a fortiori}, $(\rho,\varepsilon')$-separated.
\end{proof}

Suppose now that $X$ is a set equipped with an action of a monoid $M$.
A pseudometric $\rho$ on $X$ is said to be \emph{dynamically generating} for $(X,M)$ if, for all distinct points $x,y \in X$,
there exists $m = m(x,y) \in M$ such that $\rho(m x, m y) > 0$.

\begin{example}
Every metric on $X$ is dynamically generating.
Indeed, if $\rho$ is a metric on $X$, then $\rho(1_Mx,1_My) = \rho(x,y) > 0$ for all distinct $x,y \in X$.
\end{example}

\begin{example}
\label{e:rho-X-rho-Y}
Suppose that $Y \subset X$ is an $M$-invariant subset.
If $\rho$ is a pseudometric on $X$ then $\rho$ induces by restriction a pseudometric $\rho_Y$ on $Y$.
Observe that every $(\rho_Y,\varepsilon)$-separated subset $Z \subset Y$ is also $(\rho,\varepsilon)$-separated.
Consequently, if $X$ is a compact topological space, $\rho$ is continuous, and $Y$ is closed in $X$,
we have
\begin{equation}
\label{e:N-epsilon-for-inv-sub}
N_\varepsilon(Y,\rho_Y) \leq N_\varepsilon(X,\rho)
\end{equation}
for all $\varepsilon > 0$.
\par
Note also that if $\rho$ is dynamically generating for $(X,M)$ then $\rho_Y$ is clearly dynamically generating for $(Y,M)$.
\par
In the sequel, we shall simply write $\rho$ instead of $\rho_Y$ if there is no risk of confusion.
\end{example}

\begin{example}
Let $M$ be a monoid and let $A$ be a finite set.
Consider the configuration space $X \coloneqq A^M$ equipped with tis prodiscrete topology and the shift action of $M$.
Then the pseudometric $\rho \colon X \times X \to \R$ defined by
\begin{equation}
\label{e:pseudo-met-shift}
\rho(x,y) \coloneqq
\begin{cases}
0 &\text{ if } x(1_M) = y(1_M) \\
1 &\text{ otherwise,}
\end{cases}
\end{equation}
for all $x,y \in X$, is dynamically generating.
Indeed, let $x,y$ be two distinct points of $X$. Then there exists $m \in M$ such that $x(m) \neq y(m)$.
As $mx(1_M)=x(m)$ and $my(1_M)=y(m)$, this implies $mx(1_M) \neq my(1_M)$ and hence $\rho(mx, my) = 1 > 0$.
\par
Observe that the pseudometric $\rho$ is continuous.
Note also that, given any $\varepsilon > 0$, a subset $Z \subset X$ is $(\rho,\varepsilon)$-separated if and only if
the evaluation map $Z \to A$, $z \mapsto z(1_M)$ is injective.
As the set of constant configurations is $(\rho,\varepsilon)$-separated, we deduce that
\begin{equation}
\label{e:N-eps-shift}
N_\varepsilon(X,\rho) = |A|
\end{equation}
for all $\varepsilon > 0$. 
\par
When $A$ has more than one element and $M$ is uncountable, the configuration space $A^M$ is not metrizable.
This shows in particular that a compact topological space equipped with a continuous monoid action that admits 
a dynamically generating continuous pseudometric may fail to be metrizable.
\end{example}

\begin{remark}
Let $X$ be a topological space equipped with a continuous action of a monoid $M$.
Suppose that there exists a dynamically generating continuous pseudometric $\rho \colon X \times X \to \R$.
Let $x$ and $y$ be distinct points of $X$. Then we can find $m \in M$ such that $\varepsilon \coloneqq \rho(mx,my) > 0$.
Then  $U \coloneqq \{z \in X: \rho(mz,mx) < \varepsilon/2\}$ and $V \coloneqq \{z \in X: \rho(mz,my) < \varepsilon/2\}$.
are disjoint open neighborhoods of $x$ and $y$, respectively. This shows that the topology on $X$ is Hausdorff.
\end{remark}

\begin{remark}
\begin{remark}
Let $X$ be a uniform space equipped with an action of a monoid $M$.
One  says that the action of $M$ on $X$ is \emph{expansive} if there exists an entourage $U_0$
of $X$ such that, for all distinct $x,y \in X$, there exists $m \in M$ such that $(mx,my) \notin U_0$.
Such an entourage $U_0$ is then called an \emph{expansiveness entourage} for the action of $M$ on $X$.
If the action of $M$ on $X$ is expansive, then there exists a continuous pseudometric on $X$
which is dynamically generating (cf.~\cite[Chapitre 9, Section 4, Th\'eor\`eme 1]{bourbaki-tg-5-10}).
Indeed, for every entourage $U$ of $X$,
there exist a continuous pseudometric $\rho = \rho_{U}$ on $X$ and a real number $\varepsilon = \varepsilon_U >0$ such that
$\{(x,y) \in X \times X : \rho(x,y) < \varepsilon\} \subset U$ (cf.~\cite[Lemma~6.11]{kelley}).
Now, if $U_0$ is an expansiveness entourage for the action of $M$ on $X$ and $x,y \in X$ are distinct,
then there exists $m \in M$ such that $(mx,my) \notin U_0$.
This implies that $\rho_{U_0}(mx,my) \geq \varepsilon_{U_0} > 0$, showing that $\rho$ is dynamically generating for $(X,M)$.
See also~\cite[Section~3]{JDCS2}.
\end{remark}
\end{remark}

\subsection{Symmetric monoids and the Hamming metric} 
\label{s:symm-monoids-hamming}
Let $D$ be a set. We denote by $\Map(D)$ the \emph{symmetric monoid} of $D$, i.e., the set consisting of all maps
$f \colon D \to D$ with the composition of maps as the monoid operation.
The identity element of the symmetric monoid $\Map(D)$ is the identity map $\Id_D \colon D \to D$.
\par
Suppose  that  $D$ is a non-empty finite set.
The \emph{Hamming metric} $d_D^\Ham$ on $\Map(D)$ is the metric defined by 
\begin{equation}
\label{e:def-hamming}
d_D^\Ham(f,g) \coloneqq \frac{1}{|D|} |\{v \in D : f(v) \not= g(v)\}|
\end{equation} 
for all $f,g \in \Map(D)$. 
\par
Note that $0 \leq  d_D^\Ham(f,g) \leq 1$ for all $f,g \in \Map(D)$.
\par

\begin{proposition}
Let $D_1, D_2, \ldots, D_n$ be a finite sequence of non-empty finite sets.  
Consider the Cartesian product $D = \prod_{i=1}^n D_i$ and the natural monoid morphism  
$\Phi \colon \prod_{i=1}^n \Map(D_i) \to \Map(D)$ given by
\[
\Phi(f)(v) = (f_1(v_1),  \ldots, f_n(v_n))
\]
for all $f = (f_i)_{i=1}^n \in \prod_{i=1}^n\Map(D_i)$ and $v = (v_i)_{i=1}^n \in D$.
Then
\begin{equation}
\label{e;hamming-product}
d_D^\Ham (\Phi(f),\Phi(g)) = 1 - \prod_{i=1}^n \left(1 - d_{D_i}^\Ham(f_i,g_i)\right)
\end{equation}
for all $f = (f_i)_{i=1}^n$ and $g = (g_i)_{i=1}^n$ in $\prod_{i=1}^n \Map(D_i)$.
\end{proposition}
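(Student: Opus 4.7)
The plan is to compute both sides directly from the definition of the Hamming metric by partitioning $D = \prod_{i=1}^n D_i$ according to where the coordinate maps agree. The key observation is that, by the definition of $\Phi$, two values $\Phi(f)(v)$ and $\Phi(g)(v)$ are equal in $D$ if and only if $f_i(v_i) = g_i(v_i)$ for every index $i \in \{1, \ldots, n\}$, since equality in the product set is coordinatewise.

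With this in hand, I would first introduce, for each $i$, the ``agreement set'' $A_i \coloneqq \{v_i \in D_i : f_i(v_i) = g_i(v_i)\} \subset D_i$, and the global agreement set $A \coloneqq \{v \in D : \Phi(f)(v) = \Phi(g)(v)\} \subset D$. The observation above gives $A = \prod_{i=1}^n A_i$, and hence $|A| = \prod_{i=1}^n |A_i|$ by the multiplicativity of cardinality for Cartesian products of finite sets. Dividing by $|D| = \prod_{i=1}^n |D_i|$ yields
\[
\frac{|A|}{|D|} = \prod_{i=1}^n \frac{|A_i|}{|D_i|} = \prod_{i=1}^n \bigl(1 - d_{D_i}^\Ham(f_i,g_i)\bigr),
\]
where the last equality uses the definition \eqref{e:def-hamming} applied to each factor, namely $|A_i|/|D_i| = 1 - d_{D_i}^\Ham(f_i,g_i)$.

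Finally, applying \eqref{e:def-hamming} on the left to the complementary ``disagreement'' set gives
\[
d_D^\Ham(\Phi(f),\Phi(g)) = \frac{|D \setminus A|}{|D|} = 1 - \frac{|A|}{|D|} = 1 - \prod_{i=1}^n \bigl(1 - d_{D_i}^\Ham(f_i,g_i)\bigr),
\]
which is the claimed identity \eqref{e;hamming-product}. There is no real obstacle here: the entire argument reduces to the coordinatewise nature of equality in a product set and the multiplicativity of cardinality; the only thing to be careful about is keeping track of the complementation $|A|/|D| = 1 - d_D^\Ham(\Phi(f),\Phi(g))$ versus the product formula for $|A|/|D|$ itself.
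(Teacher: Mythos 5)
Your proof is correct and follows the same route as the paper: identify the agreement set $\{v \in D : \Phi(f)(v) = \Phi(g)(v)\}$ with the product $\prod_{i=1}^n \{v_i \in D_i : f_i(v_i) = g_i(v_i)\}$, take cardinalities, and convert to the Hamming metrics via complementation. Your write-up just makes explicit the normalization and complementation steps that the paper leaves implicit.
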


\begin{proof}
The formula \eqref{e;hamming-product} immediately follows from the equality 
\[
\{v \in D: \Phi(f)(v) = \Phi(g)(v) \} = \prod_{i=1}^n \{v_i \in D_i : f_i(v_i) = g_i(v_i)\}
\]
after taking cardinalities of both sides.
\end{proof}

\subsection{Sofic monoids}
\label{subsec:sofic}

By Lemma~6.2 in~\cite{sofic-monoids},
the definition of sofic monoids proposed in~\cite[Definitions 3.1 and 3.2]{sofic-monoids} is equivalent to the following one (cf.~\cite{kambites-large-class}).

\begin{definition}
\label{d:sofic-monoid}
A monoid $M$ is called \emph{sofic} if it satisfies the following condition:
for every finite subset $K \subset M$ and every $\varepsilon > 0$,
there exist a non-empty finite set $D$ and a map $\sigma \colon M \to \Map(D)$ such that
\begin{enumerate}[{\rm (SM1)}]
\item $\sigma(1_M) = \Id_D$;
\item $d_D^\Ham(\sigma(k_1k_2),\sigma(k_1)\sigma(k_2)) \leq \varepsilon$ for all $k_1,k_2 \in K$;
\item $d_D^\Ham(\sigma(k_1),\sigma(k_2)) \geq 1 - \varepsilon$ for all distinct $k_1, k_2 \in K$.
\end{enumerate}
\end{definition}

To define sofic topological entropy, it will be more convenient to use the following characterization of soficity.

\begin{proposition}
\label{p:equiv-def-sofic}
Let $M$ be a monoid.
Then the following conditions are equivalent:
\begin{enumerate}[\rm (a)]
\item
$M$ is sofic;
\item
there exist a directed set $I$ and  a net $\Sigma = (D_i,\sigma_i)_{i \in I}$, where $D_i$ is a non-empty finite set and
$\sigma_i \colon M \to \Map(D_i)$ is a map for all $i \in I$, such that:
\begin{enumerate}[{\rm ({SA}1)}]
\item $\sigma_i(1_M) = \Id_{D_i}$ for all $i \in I$;
\item $\lim_{i \in I}  d_{D_i}^\Ham(\sigma_i(m_1m_2),\sigma_i(m_1)\sigma_i(m_2)) = 0$ for all $m_1,m_2 \in M$;
\item $\lim_{i \in I}  d_{D_i}^\Ham(\sigma_i(m_1),\sigma_i(m_2)) = 1$ for all distinct $m_1, m_2 \in M$.
\end{enumerate}
\end{enumerate}
\end{proposition}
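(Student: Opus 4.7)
The plan is to prove the equivalence by going in both directions, with the reverse implication (b) $\Rightarrow$ (a) being essentially a read-off of the net conditions at a single late index, and the forward implication (a) $\Rightarrow$ (b) proceeding by choosing the directed set to be the set of pairs $(K,\varepsilon)$ that appear as parameters in Definition~\ref{d:sofic-monoid}.

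For (b) $\Rightarrow$ (a), I would fix a finite subset $K \subset M$ and an $\varepsilon > 0$. Since $K \times K$ is finite, conditions (SA2) and (SA3) applied to each of the finitely many pairs $(k_1,k_2) \in K \times K$ (respectively each pair of distinct elements) yield, by the usual argument on nets, indices $i_{k_1,k_2}^{(2)}$ and $i_{k_1,k_2}^{(3)}$ beyond which each individual approximation holds to within $\varepsilon$. Since $I$ is directed and the collection of such indices is finite, I can pick a common upper bound $i_0 \in I$ such that the bounds (SM2) and (SM3) hold simultaneously for all relevant pairs at $i = i_0$. Setting $(D,\sigma) \coloneqq (D_{i_0},\sigma_{i_0})$ then exhibits the finite approximation required by Definition~\ref{d:sofic-monoid}, condition (SM1) being inherited immediately from (SA1).

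For (a) $\Rightarrow$ (b), I would take $I$ to be the set of pairs $i = (K,\varepsilon)$, where $K \subset M$ is a finite subset and $\varepsilon > 0$, equipped with the partial order $(K,\varepsilon) \leq (K',\varepsilon')$ iff $K \subseteq K'$ and $\varepsilon' \leq \varepsilon$. This is directed since given $(K,\varepsilon)$ and $(K',\varepsilon')$ the pair $(K \cup K', \min(\varepsilon,\varepsilon'))$ dominates both. For each $i = (K,\varepsilon) \in I$, Definition~\ref{d:sofic-monoid} furnishes a pair $(D_i,\sigma_i)$ satisfying (SM1)--(SM3) for these specific parameters; axiom (SA1) is then immediate.

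To verify (SA2), fix $m_1,m_2 \in M$ and any $\varepsilon_0 > 0$, and set $i_0 \coloneqq (\{m_1,m_2\},\varepsilon_0)$. For every $i = (K,\varepsilon) \geq i_0$, one has $m_1,m_2 \in K$ and $\varepsilon \leq \varepsilon_0$, so (SM2) gives $d_{D_i}^\Ham(\sigma_i(m_1m_2),\sigma_i(m_1)\sigma_i(m_2)) \leq \varepsilon \leq \varepsilon_0$; since $\varepsilon_0 > 0$ was arbitrary, the limit is $0$. For (SA3), the same choice of $i_0$ combined with (SM3) gives $d_{D_i}^\Ham(\sigma_i(m_1),\sigma_i(m_2)) \geq 1 - \varepsilon_0$ eventually for any distinct $m_1,m_2$, and since Hamming distances are bounded above by $1$, the limit equals $1$. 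The only step requiring mild care is the bookkeeping around picking the common upper bound in the $(b) \Rightarrow (a)$ direction and confirming that the directed set $I$ in the converse is genuinely directed, but no real obstacle is expected since both are standard translations between an $(\varepsilon,K)$-formulation and a convergent-net formulation.
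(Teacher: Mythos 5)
Your proposal is correct and follows essentially the same route as the paper: the same directed set of pairs $(K,\varepsilon)$ with the same ordering for (a) $\Rightarrow$ (b), and the same evaluation of the net at a single sufficiently large index (obtained from finiteness of $K$ and directedness of $I$) for (b) $\Rightarrow$ (a). The extra bookkeeping you spell out (common upper bound over finitely many pairs, boundedness of the Hamming metric by $1$ in the (SA3) limit) is exactly what the paper leaves implicit, so there is no substantive difference.
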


\begin{proof}
Suppose first that $M$ satisfies (b).
Let $K$ be a finite subset of $M$ and let $\varepsilon > 0$.
Using the fact that $K$ is finite, we deduce from (SA2) and (SA3) that there exists $i_0 \in I$ such that
every $i \in I$ with $i \geq i_0$ satisfies
$d_{D_i}^\Ham(\sigma_i(k_1k_2),\sigma_i(k_1)\sigma_i(k_2)) \leq \varepsilon$ for all $k_1,k_2 \in K$, and
  $d_{D_i}^\Ham(\sigma_i(k_1),\sigma_i(k_2)) \geq 1 - \varepsilon$ for all distinct $k_1, k_2 \in K$.
As $\sigma_i(1_M) = \Id_{D_i}$ for all $i \in I$ by (SA1), 
 it follows that  $D \coloneqq D_{i_0}$ and  $\sigma \coloneqq \sigma_{i_0}$
  satisfy the conditions in Definition~\ref{d:sofic-monoid}.
This shows that  (b) implies (a).
\par
Conversely, suppose that $M$ is sofic.
Let $I$ denote the set consisting of all pairs $(K,\varepsilon)$, where $K$ is a finite subset of $M$ and $\varepsilon > 0$.
We partially order $I$ by setting $(K_1,\varepsilon_1) \leq (K_2,\varepsilon_2)$ for $(K_1,\varepsilon_1), (K_2,\varepsilon_2) \in I$ if
$K_1 \subset K_2$ and $\varepsilon_2 \leq \varepsilon_1$.
Observe that $I$ is a directed set for this partial ordering.
Indeed, for all $(K_1,\varepsilon_1), (K_2,\varepsilon_2) \in I$, the element $(K_1 \cup K_2,\min(\varepsilon_1,\varepsilon_2))$
is an upper bound for $(K_1,\varepsilon_1)$ and $(K_2,\varepsilon_2)$ in $I$.
Since $M$ is sofic, for every $i = (K,\varepsilon) \in I$, there exists a non-empty finite set $D_i$
and a map $\sigma_i \colon M \to \Map(D_i)$ such that
$\sigma_i(1_M) = \Id_{D_i}$,
$d_{D_i}^\Ham(\sigma_i(m_1m_2),\sigma_i(m_1)\sigma_i(m_2)) \leq \varepsilon$ for all $m_1,m_2 \in K$,
and $d_{D_i}^\Ham(\sigma_i(m_1),\sigma_i(m_2)) \geq 1 - \varepsilon$ for all distinct $m_1, m_2 \in K$.
Let us show that the net $\Sigma \coloneqq (D_i,\sigma_i)_{i \in I}$ satisfies the conditions in (b).
Condition (SA1) is satisfied by construction.
Let us fix now $m_1,m_2 \in M$ and $\varepsilon > 0$.
Let $i \coloneqq (K,\varepsilon)$ with $K \coloneqq \{m_1,m_2\}$.
For all $i' = (K',\varepsilon') \in I$ such that $i' \geq i$,
we have $m_1,m_2 \in K'$ and $\varepsilon' \leq \varepsilon$,
so that $d_{D_{i'}}^\Ham(\sigma_{i'}(m_1m_2),\sigma_{i'}(m_1)\sigma_{i'}(m_2)) \leq \varepsilon' \leq \varepsilon$.
Therefore $\Sigma$ satisfies (SA2).
On the other hand, if $m_1 \not= m_2$, then
$d_{D_{i'}}^\Ham(\sigma_{i'}(m_1),\sigma_{i'}(m_2))  \geq 1 - \varepsilon' \geq 1 - \varepsilon$.
This shows that $\Sigma$ also satisfies (SA3) and completes the proof that (a) implies (b).
The equivalence (a) $\iff$ (b) is established.
\end{proof}

A net $\Sigma = (D_i,\sigma_i)_{i \in I}$ as in Proposition \ref{p:equiv-def-sofic}.(b) is called a \emph{sofic approximation}
of the sofic monoid $M$.

\begin{proposition}
\label{p:sa-infinite-monoid}
Let $M$ be an infinite sofic monoid and let $\Sigma = (D_i,\sigma_i)_{i \in I}$ be a sofic approximation of $M$.
Then one has $\lim_{i \in I} |D_i| =~\infty$.
\end{proposition}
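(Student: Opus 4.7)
The plan is to prove the contrapositive formulation: for every integer $N \geq 1$, there exists $i_0 \in I$ such that $|D_i| > N$ for all $i \geq i_0$. Since $M$ is infinite, I can fix $N+1$ distinct elements $m_0, m_1, \dots, m_N \in M$. There are only finitely many pairs of such elements, so by (SA3) and the directedness of $I$, for any prescribed $\varepsilon_N > 0$ I can find $i_0 \in I$ such that, for every $i \geq i_0$ and every $0 \leq j < k \leq N$,
\begin{equation*}
d_{D_i}^\Ham\bigl(\sigma_i(m_j),\sigma_i(m_k)\bigr) \geq 1 - \varepsilon_N.
\end{equation*}

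The core step is a pigeonhole/union-bound argument. Assume, aiming for a contradiction, that some $i \geq i_0$ satisfies $|D_i| \leq N$. For every $v \in D_i$, the $N+1$ values $\sigma_i(m_0)(v), \dots, \sigma_i(m_N)(v)$ all lie in $D_i$, a set of cardinality at most $N$; by pigeonhole, at least two of them coincide. Setting
\begin{equation*}
A_{j,k} \coloneqq \{v \in D_i : \sigma_i(m_j)(v) = \sigma_i(m_k)(v)\} \qquad (0 \leq j < k \leq N),
\end{equation*}
this says $D_i = \bigcup_{j<k} A_{j,k}$. By the definition of the Hamming distance \eqref{e:def-hamming},
\begin{equation*}
|A_{j,k}| = |D_i|\bigl(1 - d_{D_i}^\Ham(\sigma_i(m_j),\sigma_i(m_k))\bigr) \leq \varepsilon_N |D_i|.
\end{equation*}
Taking cardinalities of both sides of $D_i = \bigcup_{j<k} A_{j,k}$ and dividing by $|D_i| > 0$ gives $1 \leq \binom{N+1}{2}\varepsilon_N$, which contradicts the prior choice $\varepsilon_N < 2/(N(N+1))$. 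Hence $|D_i| > N$ for all $i \geq i_0$.

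I do not anticipate any serious obstacle. The only subtle points are to choose $\varepsilon_N$ \emph{before} invoking (SA3) so that the contradiction is quantitative, and to use the directedness of $I$ to fuse the finitely many indices obtained from the pairwise limits in (SA3) into a single $i_0$ valid for all pairs $(m_j,m_k)$ simultaneously. Neither observation requires anything beyond the hypotheses already in Proposition~\ref{p:equiv-def-sofic}.
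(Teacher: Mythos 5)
Your proof is correct, but it runs along a different line from the paper's. The paper also fixes a large finite subset of $M$ and uses (SA3) together with directedness to pass to a tail of the net, but it only extracts the \emph{qualitative} consequence that the Hamming distances are eventually nonzero: for $i$ large, $\sigma_i$ is then injective on a finite set $K \subset M$ with $|K| = d^d$, so $|D_i|^{|D_i|} = |\Map(D_i)| \geq d^d$, forcing $|D_i| \geq d$. You instead take only $N+1$ elements of $M$ but use the \emph{quantitative} strength of (SA3) (distances at least $1-\varepsilon_N$ with $\varepsilon_N < 2/(N(N+1))$), and conclude by a pigeonhole argument on the points of $D_i$: if $|D_i| \leq N$, each $v \in D_i$ lies in some coincidence set $A_{j,k}$, each of which has at most $\varepsilon_N |D_i|$ elements, giving the contradiction $1 \leq \binom{N+1}{2}\varepsilon_N$. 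Both arguments are complete; the trade-off is that your version needs far fewer elements of $M$ (only $N+1$ rather than $d^d$) but genuinely uses that the Hamming distances tend to $1$, whereas the paper's counting of maps in $\Map(D_i)$ would still work under the weaker hypothesis that distinct elements are eventually separated by a positive distance. Your two flagged subtleties (fixing $\varepsilon_N$ before invoking (SA3), and using directedness to obtain a single $i_0$ for the finitely many pairs) are exactly the right ones, and the paper handles the latter in the same implicit way.
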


\begin{proof}
Let us fix some integer $d \geq 1$.
Since $M$ is infinite, we can find a finite subset $K \subset M$ such that $|K| = d^{d}$.
As $K$ is finite, we deduce from Condition (SA3) that there exists $i_0 \in I$ such that,
for all $i \in I$ with $i \geq i_0$, we have $d_{D_i}^\Ham(\sigma_i(k_1), \sigma_i(k_2))  > 0$
for all distinct $k_1, k_2 \in K$.
This implies that the restriction of $\sigma_i$ to $K$ is injective and hence that
$|\Map(D_i)| \geq |K| = d^d$.
As $|\Map(D_i)| = |D_i|^{|D_i|}$, we deduce that $|D_i| \geq d$ for all $i \geq i_0$.
This shows  that $\lim_{i \in I} |D_i| =~\infty$.
\end{proof}

\begin{remark}
In the case when the monoid $M$ is countable, a similar argument shows that $M$ is sofic if and only if there exists a sequence
$(D_n,\sigma_n)_{n \in \N}$, where $D_n$ is a non-empty finite set and $\sigma_n \colon M \to D_n$ is a map for all $n \in \N$,
such that $\sigma_n(1_M) = \Id_{D_n}$ for all $n \in \N$, and
\begin{align*}
&\lim_{n \to \infty}  d_{D_n}^\Ham(\sigma_n(m_1m_2),\sigma_n(m_1)\sigma_n(m_2)) = 0 \text{ for all $m_1,m_2 \in M$;} \\
&\lim_{n \to \infty}  d_{D_n}^\Ham(\sigma_n(m_1),\sigma_n(m_2)) = 1 \text{ for all distinct $m_1, m_2 \in M$}.
\end{align*}
Such a sequence is called a \emph{sofic approximation sequence} for $M$.
In the case when the sofic monoid $M$ is countably infinite, every sofic approximation sequence $(D_n,\sigma_n)_{n \in \N}$  
satisfies $\lim_{n \to \infty} |D_n| =~\infty$.
\end{remark}

\section{Strongly sofic monoids}
\label{s:ssmonoids}
While the question whether or not all sofic monoids are surjunctive remains open, we provide a positive answer 
for the subclass of strongly sofic monoids that we now introduce and study.  

\begin{definition}
\label{d:strongly-sofic-monoid}
A monoid $M$ is called \emph{strongly sofic} if it satisfies the following condition:
for every finite subset $K \subset M$ there exists an integer $\Delta_K \geq 1$ 
such that for every $\varepsilon > 0$ there exist a non-empty finite set $D$ and 
a map $\sigma \colon M \to \Map(D)$ satisfying: 
\begin{enumerate}[{\rm (SM1)}]
\item $\sigma(1_M) = \Id_D$;
\item $d_D^\Ham(\sigma(k_1k_2),\sigma(k_1)\sigma(k_2)) \leq \varepsilon$ for all $k_1,k_2 \in K$;
\item $d_D^\Ham(\sigma(k_1),\sigma(k_2)) \geq 1 - \varepsilon$ for all distinct $k_1, k_2 \in K$;
\item $|\sigma(k)^{-1}(v)| \leq \Delta_K$ for all $k \in K$ and $v \in D$. 
\end{enumerate} 
\end{definition}

We have the following immediate observations. 

\begin{proposition}
\label{p:strongly-sofic-implies-sofic}
Every strongly sofic monoid is sofic.
\end{proposition}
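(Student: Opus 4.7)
The plan is to observe that this proposition follows immediately by unwinding the two definitions and noting that conditions (SM1), (SM2), (SM3) appear verbatim in both Definition~\ref{d:sofic-monoid} (sofic monoid) and Definition~\ref{d:strongly-sofic-monoid} (strongly sofic monoid). The only additional requirement in the strongly sofic case is the uniform preimage bound (SM4), which plays no role for soficity.

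Concretely, I would proceed as follows. Let $M$ be a strongly sofic monoid. To verify that $M$ is sofic in the sense of Definition~\ref{d:sofic-monoid}, I fix an arbitrary finite subset $K \subset M$ and an arbitrary $\varepsilon > 0$. Invoking Definition~\ref{d:strongly-sofic-monoid} for this $K$, I obtain an integer $\Delta_K \geq 1$; then, for the given $\varepsilon > 0$, the same definition yields a non-empty finite set $D$ and a map $\sigma \colon M \to \Map(D)$ satisfying conditions (SM1)--(SM4). In particular, the pair $(D,\sigma)$ satisfies (SM1), (SM2), and (SM3), which is exactly what Definition~\ref{d:sofic-monoid} demands. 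Hence $M$ is sofic.

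There is no real obstacle here; the statement is a direct consequence of the fact that strong soficity is defined by strengthening the sofic approximation conditions rather than by relaxing them. The integer $\Delta_K$ and the preimage bound (SM4) are simply discarded in the reduction. The genuine content of the notion of strong soficity (and the place where (SM4) becomes essential) appears only later, in the proof of the topological conjugacy invariance of sofic topological entropy and the surjunctivity theorem.
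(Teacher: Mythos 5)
Your proposal is correct and follows exactly the same reasoning as the paper, which simply observes that Definition~\ref{d:strongly-sofic-monoid} is obtained from Definition~\ref{d:sofic-monoid} by adding the extra condition (SM4), so any $(D,\sigma)$ witnessing strong soficity for $(K,\varepsilon)$ already witnesses soficity. Nothing is missing.
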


\begin{proof}
This is clear since Definition~\ref{d:strongly-sofic-monoid} is obtained from Definition~\ref{d:sofic-monoid}
by just adding the condition on the existence of $\Delta_K$ satisfying (SM4).
\end{proof}

\begin{proposition}
\label{p:strongly-sofic-groups-monoids}
Let $G$ be a group.
Then $G$ is sofic as a group if and only if it is strongly sofic as a monoid.
\end{proposition}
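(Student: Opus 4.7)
The plan is to deduce both implications from a single observation: permutations are bijections, hence have singleton fibers, so condition (SM4) is automatically satisfied with $\Delta_K = 1$ by any sofic-group approximation viewed as a monoid approximation.

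For the forward direction, I would assume that $G$ is sofic as a group and invoke the standard definition of soficity for groups, which provides, for every finite subset $K \subset G$ and every $\varepsilon > 0$, a non-empty finite set $D$ and a map $\sigma \colon G \to \Sym(D)$ such that $\sigma(1_G) = \Id_D$, $d_D^\Ham(\sigma(k_1k_2),\sigma(k_1)\sigma(k_2)) \leq \varepsilon$ for all $k_1, k_2 \in K$, and $d_D^\Ham(\sigma(k_1),\sigma(k_2)) \geq 1 - \varepsilon$ for all distinct $k_1, k_2 \in K$. Composing with the inclusion $\Sym(D) \hookrightarrow \Map(D)$ yields a map into the symmetric monoid that satisfies (SM1), (SM2) and (SM3) of Definition~\ref{d:strongly-sofic-monoid} verbatim. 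Since each $\sigma(k)$ is a permutation of $D$, every fiber $\sigma(k)^{-1}(v)$ is a singleton. Hence (SM4) is met with the uniform choice $\Delta_K \coloneqq 1$, which is moreover independent of $K$ and $\varepsilon$. This shows that $G$ is strongly sofic as a monoid.

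For the reverse direction, I would combine Proposition~\ref{p:strongly-sofic-implies-sofic}, which gives that $G$ is sofic as a monoid, with the equivalence ``a group is sofic as a group if and only if it is sofic with respect to its underlying monoid structure'', which is recalled in the introduction and established in~\cite{sofic-monoids}. This immediately yields that $G$ is sofic as a group.

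There is essentially no obstacle: the argument rests entirely on the observation that permutations have singleton preimages, and on a previously established equivalence. The only real care is in matching the conventions of the standard sofic-group definition (maps into $\Sym(D)$) with those of Definition~\ref{d:strongly-sofic-monoid} (maps into $\Map(D)$), but the embedding $\Sym(D) \subset \Map(D)$ makes both the verification of (SM1)--(SM3) and the bound $\Delta_K = 1$ in (SM4) entirely formal.
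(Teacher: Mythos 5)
Your proposal is correct and follows essentially the same route as the paper: the forward direction uses the sofic-group approximation $\sigma \colon G \to \Sym(D) \subset \Map(D)$ and notes that permutations have singleton fibers so (SM4) holds with $\Delta_K = 1$, while the reverse direction combines Proposition~\ref{p:strongly-sofic-implies-sofic} with the known equivalence between soficity of a group and soficity of its underlying monoid from~\cite{sofic-monoids}. No gaps.
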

\begin{proof}
Suppose first that $G$ is sofic as a group. Thus (cf.\ \cite[Definition~7.5.2]{csc-cag2}), for every finite subset $K \subset G$ and every 
$\varepsilon >0$ there exist a non-empty finite set $D$ and a map $\sigma \colon G \to \Sym(D) \subset \Map(D)$ satisfying
(SM1), (SM2), and (SM3). As $\sigma(g)$ is bijective for all $g \in G$, Condition (SM4) is trivially satisfied for 
$\Delta_K \coloneqq 1$. This shows that $G$ is strongly sofic as a monoid.
\par
Conversely, suppose that $G$ is strongly sofic as a monoid.
This implies that $G$ is sofic as a monoid by Proposition~\ref{p:strongly-sofic-implies-sofic}.
The fact that $G$ is sofic as a group then follows from \cite[Proposition 3.4]{sofic-monoids}
(see also \cite[Definition~1.1 and Definition~1.2]{es-sofic}).
\end{proof}

\begin{proposition}
\label{p:submonoids-strongly-sofic}
Every submonoid of a strongly sofic monoid is strongly sofic.
\end{proposition}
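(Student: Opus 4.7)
The plan is to start with any submonoid $N$ of a strongly sofic monoid $M$ and to observe that, given a finite subset $K \subset N$, we have $K \subset M$ as well, so the strong soficity of $M$ directly supplies a witness that we may reuse for $N$. More precisely, I would first fix an integer $\Delta_K \geq 1$ that witnesses Condition (SM4) for $M$ on the finite set $K$, and then, for every $\varepsilon > 0$, invoke the strong soficity of $M$ to obtain a non-empty finite set $D$ and a map $\sigma_M \colon M \to \Map(D)$ satisfying (SM1)–(SM4) relative to $K$, $\varepsilon$, and $\Delta_K$.

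The candidate map witnessing strong soficity of $N$ would then be the restriction $\sigma \coloneqq \sigma_M|_N \colon N \to \Map(D)$. I would verify the four conditions one by one. For (SM1), the key point is that $1_N = 1_M$ because $N$ is a submonoid of $M$, hence $\sigma(1_N) = \sigma_M(1_M) = \Id_D$. For (SM2) and (SM3), the values $\sigma(k_1 k_2)$, $\sigma(k_1)\sigma(k_2)$, $\sigma(k_1)$, $\sigma(k_2)$ coincide with the corresponding values of $\sigma_M$ whenever $k_1, k_2 \in K$, so the required Hamming inequalities transfer verbatim. For (SM4), the set $\sigma(k)^{-1}(v) \subset D$ equals $\sigma_M(k)^{-1}(v)$ for every $k \in K$ and $v \in D$, so the bound by $\Delta_K$ carries over.

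There is essentially no obstacle of substance in this argument; the only subtlety to flag explicitly is that $N$ shares the identity element with $M$ (which is part of the definition of a submonoid used in the paper), so (SM1) holds without having to alter the approximation. The verification of (SM2)--(SM4) reduces to the tautology that $\sigma_M$ and its restriction to $N$ agree on elements of $K \subset N$, which is why the same $\Delta_K$ and the same pair $(D,\sigma_M)$ work for the submonoid. Hence $N$ satisfies Definition~\ref{d:strongly-sofic-monoid}, completing the proof.
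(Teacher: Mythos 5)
Your proposal is correct and follows essentially the same route as the paper: restrict a strong sofic approximation map of $M$ (for the finite set $K \subset N \subset M$ and the same $\Delta_K$) to the submonoid $N$, noting that $1_N = 1_M$, so that Conditions (SM1)--(SM4) transfer verbatim. Your explicit attention to the quantifier order (fixing $\Delta_K$ before $\varepsilon$) is if anything slightly more careful than the paper's phrasing, but the argument is the same.
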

\begin{proof}
Suppose that $M$ is a strongly sofic monoid and let $N \subset M$ be a submonoid. Let $K \subset N$ be a finite subset and let
$\varepsilon > 0$. As $M$ is  strongly sofic, there exist an integer $\Delta_K \geq 1$, a non-empty finite set 
$D$ and a map $\sigma \colon M \to \Map(D)$ satisfying (SM1) - (SM4). Denoting by $\sigma' \colon N \to \Map(D)$ the restriction
of $\sigma$ to $N$, that is, $\sigma'(m) \coloneqq \sigma(m)$ for all $m \in N$, it is clear that Conditions (SM1) - (SM4)
are satisfied verbatim by $\sigma'$. This shows that $N$ is a strongly sofic monoid.
\end{proof}

\begin{corollary}
\label{c:embeddable-group}
Every monoid that can be embedded into a sofic group is strongly sofic. 
\end{corollary}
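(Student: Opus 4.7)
The plan is to combine the two immediately preceding propositions in a direct chain. Suppose $M$ is a monoid that embeds into a sofic group $G$, so that there is an injective monoid morphism $\iota \colon M \to G$. The image $\iota(M) \subset G$ is then a submonoid of $G$ (viewed as a monoid under its group multiplication), and $\iota$ gives an isomorphism of monoids between $M$ and $\iota(M)$. Since strong soficity is clearly preserved under isomorphism of monoids (the definition is formulated entirely in terms of the abstract monoid structure), it suffices to show that $\iota(M)$ is strongly sofic.

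Next I would apply Proposition~\ref{p:strongly-sofic-groups-monoids} to $G$: since $G$ is sofic as a group, it is strongly sofic when regarded as a monoid. Then Proposition~\ref{p:submonoids-strongly-sofic}, applied to the submonoid $\iota(M) \subset G$, yields that $\iota(M)$ is itself strongly sofic as a monoid. Transporting this property back along $\iota$, we conclude that $M$ is strongly sofic, as desired.

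There is essentially no obstacle here — the statement is a formal consequence of the two preceding propositions together with the trivial observation that the class of strongly sofic monoids is invariant under monoid isomorphism. The only mild point worth noting is that even though $G$ is a group, we use its monoidal structure when speaking of its submonoid $\iota(M)$; in general $\iota(M)$ need not be a subgroup of $G$ (it need not be closed under inversion), which is exactly why having Proposition~\ref{p:submonoids-strongly-sofic} for arbitrary submonoids, rather than merely for subgroups, is what makes the argument work.
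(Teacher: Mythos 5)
Your argument is exactly the paper's: the corollary is deduced by applying Proposition~\ref{p:strongly-sofic-groups-monoids} to the ambient sofic group and Proposition~\ref{p:submonoids-strongly-sofic} to the image submonoid, with the (correct and routine) remark that strong soficity is an isomorphism invariant. The proof is correct and matches the paper's approach.
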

\begin{proof}
This follows from Proposition~\ref{p:strongly-sofic-groups-monoids} and Proposition~\ref{p:submonoids-strongly-sofic}.
\end{proof}

\begin{corollary}
\label{ex:amen-sof}
All free monoids, all cancellative commutative monoids, and all cancellative one-sided amenable monoids are strongly sofic.
\end{corollary}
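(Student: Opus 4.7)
The plan is uniform across the three cases: to show that each monoid in question embeds as a submonoid into a sofic group, and then to invoke Corollary~\ref{c:embeddable-group}. Recall that amenable groups and residually finite groups are both sofic (both facts are classical, see e.g.\ \cite[Chapter~7]{csc-cag2}), which gives a ready supply of sofic groups into which to embed.

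First I would handle free monoids. The free monoid $F(S)$ on a set $S$ embeds canonically into the free group $F_g(S)$ on $S$: regarding a word $s_1 s_2 \cdots s_n \in F(S)$ as the same (already reduced, since no letter is inverted) word in $F_g(S)$ gives an injective monoid morphism. Free groups are residually finite, hence sofic, so the claim follows from Corollary~\ref{c:embeddable-group}.

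Next I would treat cancellative commutative monoids. Given such a monoid $M$, the Grothendieck construction yields its group of fractions $G(M)$, which is abelian; cancellativity is precisely the condition ensuring that the canonical map $M \to G(M)$ is injective. Since $G(M)$ is abelian, it is amenable, and therefore sofic. Another application of Corollary~\ref{c:embeddable-group} concludes this case.

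The most delicate case is that of cancellative one-sided amenable monoids, and it is where I would expect the main obstacle to lie. Here the plan is to appeal to the classical theorem that a cancellative left-amenable (respectively right-amenable) monoid $M$ satisfies the right (respectively left) Ore condition, hence, by Ore's theorem, embeds into its group of right (respectively left) fractions $G$; moreover, the amenability of $M$ transfers to $G$, so that $G$ is an amenable group. The difficulty is not conceptual but bibliographic, namely isolating and citing the correct combination of the Ore embedding theorem with the amenability transfer; once this is in place, Corollary~\ref{c:embeddable-group} again delivers the conclusion.
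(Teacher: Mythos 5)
Your proposal is correct and follows essentially the same route as the paper: embed each monoid into a sofic group and invoke Corollary~\ref{c:embeddable-group}, with the free-monoid case handled identically via residual finiteness of free groups. The only cosmetic differences are that the paper treats cancellative commutative monoids as a special case of cancellative one-sided amenable monoids (all commutative monoids being amenable) rather than via the Grothendieck group, and that for the one-sided amenable case it simply cites \cite[Corollary~3.6]{wilde-witz} -- every cancellative left-amenable monoid embeds into an amenable group, the right-amenable case following by passing to opposite semigroups -- which is precisely the Ore-type embedding-with-amenability-transfer statement whose reference you were unsure of.
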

\begin{proof}
Let $X$ be a set and let $M \coloneqq  X^*$ denote the free monoid based on $X$.
Then $M$ is embeddable into the free group $F_X$ based on $X$.
As free groups, being residually finite, are sofic (cf.\ \cite[Corollary 7.5.11]{csc-cag2}),
it follows from Corollary~\ref{c:embeddable-group} that $M$ is strongly sofic.
\par
It is known (cf.\ \cite[Corollary~3.6]{wilde-witz}) that every cancellative left-amenable monoid is isomorphic to a submonoid of an amenable group. As the opposite semigroup of a right-amenable semigroup is left-amenable and every group is isomorphic to its opposite, we deduce that every cancellative right-amenable semigroup is also isomorphic to a submonoid of an amenable group.
Thus, the fact that every cancellative one-sided amenable monoid is strongly sofic follows from Corollary~\ref{c:embeddable-group}
since every amenable group is sofic (cf.\ \cite[Proposition 7.5.6]{csc-cag2}).
As all commutative monoids are amenable, this implies in particular that all cancellative commutative monoids are strongly sofic.
\end{proof}

\begin{proposition}
\label{p:locally-strongly-sofic}
Every locally strongly sofic monoid is strongly sofic.
\end{proposition}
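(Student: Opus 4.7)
The plan is to unwind the definition of ``locally strongly sofic'' --- namely, that every finitely generated submonoid of $M$ is strongly sofic --- and to pull the required sofic data back from such a submonoid. Specifically, given a finite subset $K \subset M$, I consider the submonoid $\langle K \rangle \subset M$ generated by $K$. Since $K$ is finite, $\langle K \rangle$ is finitely generated, and hence strongly sofic by hypothesis. Applying Definition~\ref{d:strongly-sofic-monoid} to $\langle K \rangle$ with the finite subset $K \subset \langle K \rangle$ yields an integer $\Delta_K \geq 1$ with the property that, for every $\varepsilon > 0$, there exist a non-empty finite set $D$ and a map $\sigma' \colon \langle K \rangle \to \Map(D)$ satisfying (SM1)--(SM4) for the monoid $\langle K \rangle$.

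I then extend $\sigma'$ to a map $\sigma \colon M \to \Map(D)$ in an essentially arbitrary way: set $\sigma(m) \coloneqq \sigma'(m)$ for $m \in \langle K \rangle$, and $\sigma(m) \coloneqq \Id_D$ for $m \in M \setminus \langle K \rangle$. To check (SM1)--(SM4) for $M$ relative to $K$, with this $\sigma$ and the same integer $\Delta_K$ chosen above, I observe that (SM1) holds because $1_M \in \langle K \rangle$ and $\sigma'(1_M) = \Id_D$. Conditions (SM3) and (SM4) only involve the values of $\sigma$ at elements of $K$, while (SM2) additionally involves the values $\sigma(k_1 k_2)$ for $k_1, k_2 \in K$; these products lie in $\langle K \rangle$ by closure under the monoid operation, so $\sigma(k_1 k_2) = \sigma'(k_1 k_2)$. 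Thus (SM2), (SM3), (SM4) reduce verbatim to the corresponding properties already established for $\sigma'$, and the proof is complete.

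I do not anticipate a genuine obstacle: the content of the proposition is simply that the four defining conditions of strong soficity are witnessed locally on the test set $K$, so that the data supplied by the finitely generated submonoid $\langle K \rangle$ extends trivially. The only subtlety worth flagging is that in Definition~\ref{d:strongly-sofic-monoid} the bound $\Delta_K$ depends on $K$ but not on $\varepsilon$; this property is preserved in the transfer from $\langle K \rangle$ to $M$ because we pick $\Delta_K$ once and for all from the strongly sofic data of $\langle K \rangle$ before choosing $\varepsilon$.
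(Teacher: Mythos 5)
Your proof is correct and follows essentially the same route as the paper: pass to the finitely generated submonoid $\langle K\rangle$, take $\Delta_K$ from its strong soficity applied to $K$, and for each $\varepsilon$ extend the resulting map $\sigma' \colon \langle K\rangle \to \Map(D)$ arbitrarily (the paper leaves the extension arbitrary, you use the identity off $\langle K\rangle$, which makes no difference since (SM1)--(SM4) only test elements of $K \cup K^2 \cup \{1_M\} \subset \langle K\rangle$). Your remark about fixing $\Delta_K$ before $\varepsilon$ is exactly the right point to flag, and it is handled correctly.
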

\begin{proof}
Let $M$ be a locally strongly sofic monoid. Let $K \subset M$ be a finite subset.
Denote by $N$ the submonoid of $M$ generated by $K$. As $N$ is strongly sofic, there exist an integer $\Delta_K \geq 1$,
such that for every $\varepsilon > 0$ there exist
a non-empty finite set $D$ and a map $\sigma \colon N \to \Map(D)$ satisfying (SM1) - (SM4).
Let $\sigma' \colon M \to \Map(D)$ be an extension of $\sigma$, that is, $\sigma'(m) = \sigma(m)$ for all $m \in N$.
It is clear that Conditions (SM1) - (SM4) are also satisfied by $\sigma'$. This shows that $M$ is a strongly sofic monoid.
\end{proof}

\begin{proposition}
\label{p:prod-strongly-sofic}
Let $(M_i)_{i\in I}$ be a family of strongly sofic monoids. 
Then the product monoid $M \coloneqq \prod_{i \in I}M_i$ is also strongly sofic.
\end{proposition}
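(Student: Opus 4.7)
The plan is to reduce the infinite-product setting to a finite-coordinate construction. Given a finite subset $K \subset M = \prod_{i \in I} M_i$, I will let $K_i \coloneqq \{k_i : k \in K\} \subset M_i$ denote the (finite) projection to the $i$-th factor. For each pair of distinct elements $k, k' \in K$ I will pick one index of $I$ at which they differ, and take $J \subset I$ to be the finite set consisting of all such chosen indices; by construction, $J$ separates the points of $K$.

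For each $i \in J$, strong soficity of $M_i$ applied to $K_i$ furnishes an integer $\Delta_{K_i} \geq 1$, and I will take $\Delta_K \coloneqq \prod_{i \in J} \Delta_{K_i}$, which is a finite product depending only on $K$. Given $\varepsilon > 0$, I will set $\varepsilon' \coloneqq \varepsilon / |J|$ (the case $|J| = 0$ being trivial, handled by a singleton $D$), and for each $i \in J$ use strong soficity of $M_i$ at parameter $\varepsilon'$ for $K_i$ to produce a non-empty finite set $D_i$ and a map $\sigma_i \colon M_i \to \Map(D_i)$ satisfying (SM1)--(SM4). I then put $D \coloneqq \prod_{i \in J} D_i$ and define $\sigma \colon M \to \Map(D)$ coordinate-wise by $\sigma(m)(v) \coloneqq (\sigma_i(m_i)(v_i))_{i \in J}$, where only the coordinates of $m$ belonging to $J$ are read off.

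The verification of (SM1)--(SM4) will then be routine via the Hamming product formula \eqref{e;hamming-product}. Condition (SM1) is immediate from the definition. For (SM2) applied to $k_1, k_2 \in K$, the formula together with Bernoulli's inequality yields $d_D^\Ham(\sigma(k_1 k_2), \sigma(k_1)\sigma(k_2)) \leq 1 - (1-\varepsilon')^{|J|} \leq |J|\varepsilon' = \varepsilon$. For (SM3), given distinct $k_1, k_2 \in K$, the separating index $i_0 \in J$ supplies $d_D^\Ham(\sigma(k_1), \sigma(k_2)) \geq d_{D_{i_0}}^\Ham(\sigma_{i_0}((k_1)_{i_0}), \sigma_{i_0}((k_2)_{i_0})) \geq 1 - \varepsilon'$, since all remaining factors in the product formula contribute a number in $[0,1]$. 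For (SM4), the coordinate-wise definition yields $|\sigma(k)^{-1}(v)| = \prod_{i \in J} |\sigma_i(k_i)^{-1}(v_i)| \leq \Delta_K$.

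The main obstacle, and precisely where strong soficity rather than plain soficity is essential, is that $\Delta_K$ must be fixed in advance of $\varepsilon$; this forces $J$ to be kept finite, so one is obliged to select only enough coordinates of $I$ to distinguish pairs in $K$, rather than all coordinates on which $K$ acts non-trivially (which for an arbitrary product could be an infinite subset of $I$). Strong soficity of each factor $M_i$ then delivers the uniform preimage bound $\Delta_{K_i}$ independent of the approximation parameter, and these combine into the finite constant $\Delta_K$ that the definition requires.
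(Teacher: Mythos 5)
Your proposal is correct and follows essentially the same route as the paper's proof: choose a finite set $J \subset I$ of coordinates separating the points of $K$, take the product of strong sofic approximations of the factors $M_j$ over $j \in J$, control (SM2) and (SM3) via the Hamming product formula \eqref{e;hamming-product}, and set $\Delta_K \coloneqq \prod_{j \in J}\Delta_{K_j}$, which indeed depends only on $K$. The only (inessential) difference is your parameter bookkeeping: you take $\varepsilon' = \varepsilon/|J|$ and invoke Bernoulli's inequality, whereas the paper picks $\eta$ with $1-(1-\eta)^{|J|}\leq \varepsilon$ and $\eta \leq \varepsilon$.
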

\begin{proof}
For each $i \in I$, let  $\pi_i \colon M \to M_i$ denote the projection  morphism.
Fix a finite subset $K \subset M$.
Then there exists a finite subset $J \subset I$ such that the projection map
$\pi_J \colon M \to M_J \coloneqq \prod_{j \in J}M_j$ is injective on $K$.
For each $j \in J$, the monoid $M_j$ is strongly sofic by our hypothesis.
Let $\Delta_{\pi_j(K)}$ be the positive integer provided by Definition~\ref{d:strongly-sofic-monoid}
(after replacing $M$ by $M_j$ and $K$ by $\pi_j(K)$).
Let now $\varepsilon > 0$. 
Choose a constant $0 < \eta < 1$ small enough so that
\begin{equation}
\label{e;eta-1}
1 - (1 - \eta)^{|J|} \leq \varepsilon
\end{equation}
and $\eta\leq \varepsilon$.
Then, for each $j \in J$, there exist a non-empty finite set $D_j$, and a map $\sigma_j \colon M_j \to \Map(D_j)$ 
satisfying Conditions (SM1) - (SM4) relative to $\pi_j(K)\subset M_j$ (instead of $K \subset M$) and $\eta$ (instead of $\varepsilon$).
Consider the positive integer $\Delta_K \coloneqq \prod_{j \in J} \Delta_{\pi_j(K)}$, the non-empty finite set 
$D \coloneqq \prod_{j \in J}D_j$, and the map $\sigma \colon M \to \Map(D)$ defined by setting
\[
\sigma(m)(v) \coloneqq (\sigma_j(m_j)(v_j))_{j \in J}
\]
for all $m = (m_i)_{i \in I} \in M$ and $v = (v_j)_{j \in J} \in D$.
\par
Since $\sigma_j$ satisfies (SM1) for all $j \in J$, we have
\[
\sigma(1_M)(v) = (\sigma_j(1_{M_j})(v_j))_{j \in J} = (v_j)_{j \in J} = v = \Id_D(v)
\]
for all $v = (v_j)_{j \in J} \in D$, that is, $\sigma(1_M) = \Id_D$, and Condition (SM1) is satisfied.
\par
Moreover, for all $k = (k_i)_{i \in I}, k' = (k_i')_{i \in I} \in K$, we have
\begin{align*}
d_D^\Ham(\sigma(kk'), \sigma(k)\sigma(k'))  & =
1 - \prod_{j \in J} \left(1 - d_{D_j}^\Ham(\sigma_j(k_jk_j'),\sigma_j(k_j)\sigma_j(k_j'))\right)
&& \text{(by \eqref{e;hamming-product})} \\ 
& \leq 1 - (1 - \eta)^{|J|}  && \text{by (SM2))} \\
& \leq \varepsilon && \text{(by \eqref{e;eta-1})},
\end{align*}
where (SM2) refers to $(D_j, \pi_j(K))$ for each $j \in J$, and $\eta$. Thus, Condition (SM2) is satisfied as well.
On the other hand, if $k$ and $k'$ are distinct elements in $K$, then there exists $j_0 \in J$ such that $k_{j_0} \neq k_{j_0}'$. 
This implies
\begin{align*}
d_D^\Ham(\sigma(k), \sigma(k'))
& = 1 - \prod_{j \in J}\left(1 - d_{D_j}^\Ham(\sigma_j(k_j), \sigma_j(k_j'))\right)
&& \text{(by \eqref{e;hamming-product})} \\ 
& \geq 1 - \left(1 - d_{D_{j_0}}^\Ham(\sigma_{j_0}(k_{j_0}),\sigma_{j_0}(k_{j_0}'))\right) \\
& \geq 1 -  \eta && \text{(by (SM2))} \\
& \geq 1 - \varepsilon && \text{(since $\eta \leq \varepsilon$)},
\end{align*}
where, (SM2) now refers to $(D_{j_0}, \pi_{j_0}(K))$ and $\eta$.
It follows that Condition (SM3) is also satisfied.
Finally, for all $k \in K$ and $v \in D$ we have
\[
|\sigma(k)^{-1}(v)| = \prod_{j \in J}|\sigma_j(k_j)^{-1}(v_j)| \leq \prod_{j \in J} \Delta_{\pi_j(K)} = \Delta_K
\]
and Condition (SM4) follows as well.
This shows that $M$ is strongly sofic.
\end{proof}

\begin{corollary}
Let $(M_i)_{i \in I}$ be a family of strongly sofic monoids.
Then their direct sum $M \coloneqq \oplus_{i \in I} M_i$ is also strongly sofic.
\end{corollary}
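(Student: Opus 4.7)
The plan is to combine the two structural results already established in this section, namely Proposition~\ref{p:prod-strongly-sofic} (closure under arbitrary direct products) and Proposition~\ref{p:submonoids-strongly-sofic} (closure under submonoids). The key observation is that the direct sum $\oplus_{i \in I} M_i$ embeds canonically as a submonoid of the direct product $\prod_{i \in I} M_i$: by definition, $\oplus_{i \in I} M_i$ consists of those tuples $(m_i)_{i \in I} \in \prod_{i \in I} M_i$ for which $m_i = 1_{M_i}$ for all but finitely many indices $i \in I$. One verifies immediately that this set contains the identity $(1_{M_i})_{i \in I}$ and is closed under the componentwise monoid operation (the coordinates where a product can differ from the identity are contained in the union of the finite supports of the two factors, which is still finite).

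First I would invoke Proposition~\ref{p:prod-strongly-sofic} applied to the family $(M_i)_{i \in I}$ to conclude that the full direct product $P \coloneqq \prod_{i \in I} M_i$ is strongly sofic. Second, I would identify $M \coloneqq \oplus_{i \in I} M_i$ as a submonoid of $P$ via the natural inclusion described above. Finally, applying Proposition~\ref{p:submonoids-strongly-sofic} to the pair $M \subset P$ yields that $M$ is itself strongly sofic.

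There is no real obstacle here, since the work has already been done in the preceding two propositions; the only thing to check is the purely formal claim that $\oplus_{i \in I} M_i$ is indeed a submonoid of $\prod_{i \in I} M_i$, which is a direct consequence of the fact that the union of two finite subsets of $I$ is finite. Thus the corollary is essentially a one-line consequence of the previously established closure properties.
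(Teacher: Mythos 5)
Your proof is correct and follows exactly the paper's own argument: the direct sum is a submonoid of the direct product, so strong soficity follows from Proposition~\ref{p:prod-strongly-sofic} together with Proposition~\ref{p:submonoids-strongly-sofic}. The extra verification that $\oplus_{i \in I} M_i$ is indeed a submonoid of $\prod_{i \in I} M_i$ is a harmless addition.
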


\begin{proof}
This immediately follows from Proposition~\ref{p:submonoids-strongly-sofic} and Proposition~\ref{p:prod-strongly-sofic}
since $M = \oplus_{i \in I} M_i$ is a submonoid of the product monoid $\prod_{i \in I} M_i$. 
\end{proof}

\begin{corollary}
\label{c:projective-strongly-sofic}
If a monoid $M$ is the limit of a projective system of strongly sofic monoids, then $M$ is strongly sofic.
\end{corollary}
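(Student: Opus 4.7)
The plan is to realize the projective limit $M$ as a submonoid of the direct product of the strongly sofic monoids in the projective system, and then invoke the two closure properties already established in the paper: closure under direct products (Proposition~\ref{p:prod-strongly-sofic}) and closure under submonoids (Proposition~\ref{p:submonoids-strongly-sofic}).

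More precisely, I would first recall the standard construction of the projective limit. A projective system of monoids over a directed set $I$ consists of a family $(M_i)_{i \in I}$ of monoids together with transition morphisms $\varphi_{ij} \colon M_j \to M_i$ for $i \leq j$ satisfying $\varphi_{ii} = \Id_{M_i}$ and $\varphi_{ij} \circ \varphi_{jk} = \varphi_{ik}$ whenever $i \leq j \leq k$. The projective limit $M \coloneqq \varprojlim_{i \in I} M_i$ is canonically identified with the subset of $\prod_{i \in I} M_i$ consisting of all tuples $(m_i)_{i \in I}$ such that $\varphi_{ij}(m_j) = m_i$ for all $i \leq j$. Since the defining conditions are preserved under the componentwise monoid operation and contain the identity tuple $(1_{M_i})_{i \in I}$, this set is a submonoid of the product monoid $\prod_{i \in I} M_i$.

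Once this identification is in place, the proof reduces to a two-step invocation: by Proposition~\ref{p:prod-strongly-sofic}, the product $\prod_{i \in I} M_i$ is strongly sofic because each factor $M_i$ is; then, by Proposition~\ref{p:submonoids-strongly-sofic}, the submonoid $M \subset \prod_{i \in I} M_i$ is itself strongly sofic.

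There is no genuine obstacle here: the corollary is a purely formal consequence of the already-proved closure properties. The only point that deserves a line of verification is that the projective limit, as constructed, really is a submonoid of the product — but this is immediate from its definition as an equalizer of monoid morphisms. I would therefore expect the proof to occupy no more than a few lines.
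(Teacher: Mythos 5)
Your proposal is correct and follows exactly the same route as the paper: identify the projective limit with a submonoid of the product $\prod_{i \in I} M_i$ and then apply Proposition~\ref{p:prod-strongly-sofic} together with Proposition~\ref{p:submonoids-strongly-sofic}. The paper's proof is precisely this two-line argument, so nothing further is needed.
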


\begin{proof}
If $M$ is the limit of a projective system of monoids $(M_i)_{i \in I}$, then $M$ is a submonoid of the product 
$\prod_{i \in I} M_i$. Thus, it follows from Proposition~\ref{p:submonoids-strongly-sofic} and Proposition~\ref{p:prod-strongly-sofic} 
that if every $M_i$, $i \in I$, is strongly sofic, so is $M$.
\end{proof}

Recall that an inductive system of monoids, denoted $(M_i,\psi_{ji})$, consists of the following data: a directed set $I$, 
a family $(M_i)_{i \in I}$ of monoids and, for all $i, j \in I$ such that $i \leq j$, a monoid morphism $\psi_{ji} \colon M_i \to M_j$. Moreover these morphisms must satisfy $\psi_{ii} = \Id_{M_i}$ and $\psi_{kj} \circ \psi_{ji} = \psi_{ki}$ for all $i < j < k$ in $I$. 
Then the associated limit is the monoid $M\coloneqq (\coprod_{i \in I} M_i)/\sim$ (here $\coprod$ denotes a disjoint union of sets) 
where $\sim$ is the equivalence relation on $\coprod_{i \in I} M_i$ defined as follows: for $x_i \in M_i$ and $x_j \in M_j$, $i,j \in I$, one has $x_i \sim x_j$ provided there exists $\ell \in I$ such that $i \leq \ell$ and $j \leq \ell$ and 
$\psi_{\ell i}(x_i) = \psi_{\ell j}(x_j)$ in $M_\ell$. Denoting by $[x_i] \coloneqq \{x_j \in M_j: x_i \sim x_j, j \in I\} \in M$ the equivalence class of $x_i \in M_i$, the multiplication in $M$ is defined by $[x_i][y_j] \coloneqq [\psi_{\ell i}(x_i)\psi_{\ell j}(y_j)]$, where $\ell \in I$ is such that $i \leq \ell$ and $j \leq \ell$. 
This implies that the canonical map $x_i \mapsto [x_i]$ is a monoid morphism from $M_i$ into $M$ for all $i \in I$. 

\begin{proposition}
\label{p:inductive-strongly-sofic}
If a monoid $M$ is the limit of an inductive system of strongly sofic monoids, then $M$ is strongly sofic.
\end{proposition}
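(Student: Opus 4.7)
The plan is to reduce the strong soficity of the inductive limit $M$ to that of one of the $M_\ell$ in the system, by finding, for each finite $K \subset M$, an index $\ell \in I$ at which $K$ admits a consistent lift respecting products and the identity. Concretely, given a finite $K \subset M$, I would first form the enlarged finite set $\hat K \coloneqq K \cup (K \cdot K) \cup \{1_M\}$. Every $m \in \hat K$ has the form $m = [x_m]$ for some $x_m \in M_{i_m}$, and directedness of $I$ supplies an index $i_0 \in I$ dominating all the $i_m$; setting $\tilde m \coloneqq \psi_{i_0,i_m}(x_m) \in M_{i_0}$ (and choosing $\tilde{1_M} \coloneqq 1_{M_{i_0}}$) produces a lift of every element of $\hat K$ into a single $M_{i_0}$.

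The key refinement is to pass to a further index where products are respected. For each pair $k_1,k_2 \in K$, one has $[\tilde k_1 \tilde k_2] = k_1 k_2 = [\widetilde{k_1 k_2}]$, so by the defining relation of the inductive limit there exists $\ell_{k_1,k_2} \geq i_0$ with $\psi_{\ell_{k_1,k_2},i_0}(\tilde k_1 \tilde k_2) = \psi_{\ell_{k_1,k_2},i_0}(\widetilde{k_1 k_2})$ in $M_{\ell_{k_1,k_2}}$. Since $K \times K$ is finite, I would choose a single $\ell \in I$ dominating all these indices simultaneously and put $\hat m \coloneqq \psi_{\ell,i_0}(\tilde m)$ for $m \in \hat K$. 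Then $\hat{1_M} = 1_{M_\ell}$, the identity $\hat k_1 \hat k_2 = \widehat{k_1 k_2}$ holds in $M_\ell$ for all $k_1,k_2 \in K$, and $m \mapsto \hat m$ is injective on $\hat K$ (two equal lifts would have equal classes, contradicting distinctness in $M$).

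I would then apply strong soficity of $M_\ell$ to the finite set $L \coloneqq \{\hat m : m \in \hat K\} \subset M_\ell$ to obtain an integer $\Delta_L \geq 1$, setting $\Delta_K \coloneqq \Delta_L$. For any prescribed $\varepsilon > 0$, strong soficity of $M_\ell$ produces a non-empty finite $D$ and a map $\sigma_\ell \colon M_\ell \to \Map(D)$ satisfying (SM1)--(SM4) relative to $L$ and $\varepsilon$. I would define $\sigma \colon M \to \Map(D)$ by $\sigma(m) \coloneqq \sigma_\ell(\hat m)$ for $m \in \hat K$ and $\sigma(m) \coloneqq \Id_D$ otherwise. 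Condition (SM1) is immediate from $\sigma(1_M) = \sigma_\ell(1_{M_\ell}) = \Id_D$. Condition (SM2) for $k_1,k_2 \in K$ follows because $k_1 k_2 \in \hat K$ and the key identity $\widehat{k_1 k_2} = \hat k_1 \hat k_2$ reduces the required estimate to (SM2) for $\sigma_\ell$ applied to $\hat k_1, \hat k_2 \in L$. Condition (SM3) uses the injectivity of $m \mapsto \hat m$ on $\hat K$, and (SM4) holds since $\sigma(k) = \sigma_\ell(\hat k)$ with $\hat k \in L$ and $\Delta_K = \Delta_L$.

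The only real obstacle is the bookkeeping in the lifting step: making sure one single $\ell$ simultaneously absorbs all $|K|^2$ product relations, carries the identity element to $1_{M_\ell}$, and keeps the lifts pairwise distinct. This is a routine but essential manipulation with inductive limits of monoids over a directed set, and once $\ell$ is secured the transfer of conditions (SM1)--(SM4) from $\sigma_\ell$ to $\sigma$ is mechanical.
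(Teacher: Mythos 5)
Your proposal is correct and follows essentially the same route as the paper: enlarge $K$ to $K \cup K^2$ (you also add $1_M$, which is harmless), lift to a single $M_\ell$ where all product relations hold and distinct elements stay distinct, apply strong soficity of $M_\ell$ with a $\Delta$ chosen independently of $\varepsilon$, and extend the resulting map to $M$ by $\Id_D$ off the lifted set. The verification of (SM1)--(SM4) then transfers exactly as in the paper's proof.
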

\begin{proof}
Let $(M_i,\psi_{ji})$ be an inductive system of strongly sofic monoids and denote by $M$ its limit.
Let $K \subset M$ be a finite subset.
Let us set $H \coloneqq K \cup K^2$. For every $h \in H$ we can find $i = i(h) \in I$ and $h_i \in M_i$ such that
$h = [h_i]$. Let $\kappa \in I$ be such that $i(h) \leq \kappa$ for all $h \in H$.
We then set $h_{\kappa} \coloneqq \psi_{\kappa i}(h_i) \in M_{\kappa}$ for all $h \in H$.
Thus $h = [h_{\kappa}]$ for all $h \in H$.
Let now $k,k' \in K$. We have $kk' \in H$ and $kk'= [k_{\kappa}][k'_{\kappa}] = [k_{\kappa} k'_\kappa]$.
Thus we can find $j = j(k,k') \in I$ such that $\kappa \leq j$ and
$\psi_{j \kappa}((kk')_\kappa) = \psi_{j \kappa}(k_\kappa k'_\kappa)$ in $M_j$.
Let $\ell \in I$ be such that $j(k,k') \leq \ell$ for all $k,k' \in K$, and set
$h_\ell \coloneqq \psi_{\ell \kappa}(h_\kappa) \in M_\ell$ for all $h \in H$.
Again, we have $h = [h_\ell]$ for all $h \in H$.  Also note that $(kk')_\ell = k_\ell k'_\ell$ for all $k,k' \in K$. 
Finally, we set $K_\ell \coloneqq \{k_\ell: k \in K\}\subset M_\ell$.
The monoid  $M_\ell$ is strongly sofic by our hypothesis.
Thus, there exists a positive integer $\Delta_K$ such that, for every $\varepsilon >0$, there exist 
a non-empty finite set $D$ and a map $\sigma \colon M_\ell \to \Map(D)$ satisfying (SM1) - (SM4)
(with $M_\ell$ instead of $M$).
We then define a map $\overline{\sigma} \colon M \to \Map(D)$ by setting
\[
\overline{\sigma}(s) \coloneqq \begin{cases} \sigma(s_\ell) & \mbox{ if } s \in H\\
\Id_D & \mbox{ otherwise.}
\end{cases}
\]
In either case, whether $1_M \in H$ or not, we have $\overline{\sigma}(1_M) = \Id_D$, showing that $\overline{\sigma}$ satisfies (SM1).
Let now $k,k' \in K$. We then have
\[
d_D^\Ham(\overline{\sigma}(kk'),\overline{\sigma}(k)\overline{\sigma}(k')) =  d_D^\Ham(\sigma(k_\ell k'_\ell), \sigma(k_\ell)\sigma(k'_\ell)) < \varepsilon.
\]
Suppose now that $k \neq k'$. Then $k_\ell \neq k'_\ell$ so that
\[
d_D^\Ham(\overline{\sigma}(k),\overline{\sigma}(k')) =  d_D^\Ham(\sigma(k_\ell), \sigma(k'_\ell)) \geq 1 - \varepsilon.
\]
This shows that Conditions (SM2) and (SM3) are satisfied.
\par
Finally, for all $k \in K$ and $v \in D$ we have
\[
|\overline{\sigma}(k)^{-1}(v)| = |\sigma(k_\ell)^{-1}(v)| \leq \Delta_K,
\]
and Condition (SM4) follows as well.
This shows that $M$ is strongly sofic.
\end{proof}

Recall that a directed family of submonoids of a monoid $M$ is a family $(M_i)_{i \in I}$ of submonoids of $M$ such that for all $i,j \in I$ there exists $k \in I$ such that $M_i \cup M_j \subset M_k$. Note that setting $i < j$ whenever $M_i \subset M_j$, we have that 
a directed family of submonoids yields an inductive system $(M_i,\psi_{ji})$ where
$\psi_{ji}$ is the inclusion morphism $M_i \hookrightarrow M_j$ for $i < j$.

\begin{corollary}
\label{c:increasing-union-strongly-sofic}
If a monoid $M$ is the union of a directed family of strongly sofic submonoids, then $M$ is also strongly sofic.
\end{corollary}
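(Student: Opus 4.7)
The plan is to reduce this statement directly to Proposition~\ref{p:inductive-strongly-sofic} by recognising that a directed union of submonoids is canonically an inductive limit of those submonoids. The paragraph preceding the statement has already done most of the bookkeeping: given a directed family $(M_i)_{i \in I}$ of submonoids with $M = \bigcup_{i \in I} M_i$, one orders $I$ by $i \leq j \iff M_i \subseteq M_j$, takes $\psi_{ji} \colon M_i \hookrightarrow M_j$ to be the inclusion for $i \leq j$, and checks instantly that $\psi_{ii} = \Id_{M_i}$ and $\psi_{kj} \circ \psi_{ji} = \psi_{ki}$. So $(M_i, \psi_{ji})$ is an inductive system of strongly sofic monoids.

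Next I would verify that $M$ is the limit of this inductive system. Let $\widetilde{M} \coloneqq \bigl(\coprod_{i \in I} M_i\bigr)/\!\!\sim$ be the limit, as defined in the excerpt. Consider the map $\Phi \colon \widetilde{M} \to M$ sending $[x_i]$ to $x_i$ viewed as an element of $M$. This is well defined: if $x_i \sim x_j$, there is $\ell \in I$ with $i, j \leq \ell$ and $\psi_{\ell i}(x_i) = \psi_{\ell j}(x_j)$ in $M_\ell$, but the $\psi$'s are inclusions into $M_\ell \subseteq M$, so $x_i = x_j$ in $M$. The multiplication in $\widetilde{M}$ is defined precisely so that $\Phi$ is a monoid morphism. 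Surjectivity is immediate from $M = \bigcup_{i \in I} M_i$, and injectivity reverses the well-definedness argument: if $x_i, x_j \in M$ coincide, pick any $\ell \in I$ with $M_i \cup M_j \subseteq M_\ell$ (which exists by directedness); then $\psi_{\ell i}(x_i) = x_i = x_j = \psi_{\ell j}(x_j)$ in $M_\ell$, witnessing $x_i \sim x_j$. Hence $\Phi$ is a monoid isomorphism.

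The conclusion is then immediate from Proposition~\ref{p:inductive-strongly-sofic}, since $M \cong \widetilde{M}$ is the limit of an inductive system of strongly sofic monoids. I do not expect any genuine obstacle: no new sofic-approximation construction is required, and the only delicate point is the categorical identification of the inductive limit with the set-theoretic union, which is a standard fact about directed unions.
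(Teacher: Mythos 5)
Your proposal is correct and follows the same route as the paper: the paper's proof is the one-line observation that $M$ is the limit of the inductive system associated to the directed family (via the inclusion morphisms), and then it invokes Proposition~\ref{p:inductive-strongly-sofic}. Your explicit verification that the union is isomorphic to the inductive limit merely spells out what the paper leaves implicit.
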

\begin{proof}
The monoid $M$ is the limit of the associated inductive system.
\end{proof}

\begin{remark}
Since every monoid is the union of the directed family of its finitely generated submonoids, from
Corollary \ref{c:increasing-union-strongly-sofic}, we recover the fact
that every locally strongly-sofic monoid is strongly sofic (cf.\ Proposition \ref{p:locally-strongly-sofic}).
\end{remark}

Up to now, we have collected a few ``positive'' properties of the class of strongly sofic monoids.
It is now time to clarify that this class is indeed a proper subclass of the class of sofic monoids. 

\begin{proposition}
\label{p:strong-s-2}
No monoid admitting a non-trivial idempotent is strongly sofic.
\end{proposition}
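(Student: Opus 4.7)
The plan is to argue by contradiction. Suppose $M$ is strongly sofic and contains a non-trivial idempotent $e$ (so $e^2 = e$ and $e \neq 1_M$). Apply Definition~\ref{d:strongly-sofic-monoid} to the finite subset $K \coloneqq \{1_M, e\}$: this yields an integer $\Delta_K \geq 1$ such that, for every $\varepsilon > 0$, we obtain a non-empty finite set $D$ and a map $\sigma \colon M \to \Map(D)$ satisfying (SM1)--(SM4). The strategy is to extract enough information about $f \coloneqq \sigma(e) \in \Map(D)$ from the four conditions to force a contradiction once $\varepsilon$ is small compared with $1/(1+\Delta_K)$.

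First I would extract three pieces of data about $f$. From (SM1) and (SM3) applied to the distinct pair $1_M, e$ of $K$, one has $d_D^\Ham(\Id_D,f) \geq 1 - \varepsilon$, so the set of fixed points $F \coloneqq \{v \in D : f(v) = v\}$ satisfies $|F| \leq \varepsilon |D|$. From (SM2) applied to the pair $e,e$, together with $e^2=e$, one gets $d_D^\Ham(f,f\circ f) \leq \varepsilon$, so the set $A \coloneqq \{v \in D : f(f(v)) = f(v)\}$ satisfies $|A| \geq (1-\varepsilon)|D|$. From (SM4) we have $|f^{-1}(v)| \leq \Delta_K$ for every $v \in D$.

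The key observation is that for every $v \in A$, the element $f(v)$ is a fixed point of $f$, so $f(A) \subset F$. Combining this with the fiber bound from (SM4), we get
\[
\frac{(1-\varepsilon)|D|}{\Delta_K} \leq \frac{|A|}{\Delta_K} \leq |f(A)| \leq |F| \leq \varepsilon |D|,
\]
whence $1 - \varepsilon \leq \varepsilon \Delta_K$, i.e., $\varepsilon \geq 1/(1+\Delta_K)$. Choosing $\varepsilon < 1/(1+\Delta_K)$ at the outset (which is allowed because $\Delta_K$ is provided \emph{before} $\varepsilon$ in the definition of strong soficity), we reach the desired contradiction.

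The one delicate point to emphasize in writing this up is the order of quantifiers in Definition~\ref{d:strongly-sofic-monoid}: the integer $\Delta_K$ depends only on the finite set $K$ and must be chosen uniformly in $\varepsilon$. This uniformity is exactly what lets us pick $\varepsilon$ small enough afterwards to violate the inequality $1 \leq \varepsilon(1+\Delta_K)$; without it, the argument would collapse, which is consistent with the fact that mere soficity (Definition~\ref{d:sofic-monoid}) does not exclude non-trivial idempotents.
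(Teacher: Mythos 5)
Your proposal is correct and follows essentially the same route as the paper's proof: same choice of $K=\{1_M,e\}$, the same two sets (your $F$ and $A$ are the paper's $D'$ and $D''$), the same key observation $f(A)\subset F$, and the same contradiction with (SM4) as $\varepsilon\to 0$. The only cosmetic difference is that you bound $|A|\le\Delta_K\,|f(A)|$ directly instead of invoking the pigeonhole principle on a single fiber, which amounts to the same estimate $\Delta_K\ge(1-\varepsilon)/\varepsilon$.
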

\begin{proof}
Let $M$ be a monoid and suppose that there exists $a \in M$ such that $a^2 = a \neq 1_M$.
Set $K \coloneqq \{1_M, a\}$ and suppose that there exists an integer $\Delta_K \geq 1$ such that
for every $\varepsilon > 0$ there exist a non-empty finite set $D$ and a map $\sigma \colon M \to \Map(D)$ 
satisfying Conditions (SM1)-(SM4).
\par
Set $f \coloneqq \sigma(a)\in \Map(D)$, $D' \coloneqq \{v \in D: f(v) = v\} \subset D$, and
$D'' \coloneqq \{v \in D: f(v) = f^2(v)\} \subset D$. 
Then, by (SM2), we have 
\begin{equation}
\label{e:inegalite-D''}
|D''| \geq (1-\varepsilon) |D|
\end{equation} 
and, by (SM1) and (SM3), 
\begin{equation}
\label{e:inegalite-D'}
|D'| \leq \varepsilon |D|.
\end{equation} 

Moreover, if $u \in D''$ and $v \coloneqq f(u)$, then $f(v) = f^2(u) = f(u) = v$. This shows that $f(D'') \subset D'$.
Thus, by the pigeonhole principle, there exists $v \in f(D'')$ such that  $|f^{-1}(v)| \geq |D''|/|D'|$.
Consequently, from \eqref{e:inegalite-D''}, \eqref{e:inegalite-D'}, and (SM4) we deduce that $\Delta_K \geq (1 - \varepsilon)/\varepsilon$, 
which is absurd since $\varepsilon$ was arbitrary and $\lim_{\varepsilon \to 0} (1 - \varepsilon)/\varepsilon =~\infty$.
\end{proof}

\begin{corollary}
\label{c:zero-not-ss}
If a non-trivial monoid $M$ admits a one-sided absorbing element (e.g., a zero element), then $M$ is not strongly sofic.
\end{corollary}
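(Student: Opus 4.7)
The plan is to reduce the corollary directly to Proposition~\ref{p:strong-s-2} by producing a non-trivial idempotent from any one-sided absorbing element in $M$.

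First, suppose $a \in M$ is left-absorbing, i.e., $am = a$ for all $m \in M$. Taking $m = a$ gives $a^2 = a$, so $a$ is an idempotent. I then have to rule out $a = 1_M$: if equality held, then for every $m \in M$ one would get $m = 1_M \cdot m = am = a = 1_M$, forcing $M$ to be the trivial monoid, which contradicts the standing hypothesis that $M$ is non-trivial. Hence $a \neq 1_M$, so $a$ is a non-trivial idempotent of $M$, and Proposition~\ref{p:strong-s-2} immediately yields that $M$ is not strongly sofic.

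The case where $a$ is right-absorbing is completely symmetric: one again has $a^2 = aa = a$, and $a = 1_M$ would force every $m \in M$ to equal $a$ via $m = m \cdot 1_M = ma = a$. The parenthetical case of a zero element is then a special case, since a zero is in particular left-absorbing (and also right-absorbing).

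There is really no substantive obstacle here; the only thing to verify is that the hypothesis ``non-trivial'' is used precisely to exclude the degenerate possibility $a = 1_M$, so that the non-triviality of the idempotent produced matches the hypothesis of Proposition~\ref{p:strong-s-2}.
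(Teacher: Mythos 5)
Your argument is correct and is essentially the paper's own proof: the paper likewise observes that in a non-trivial monoid a left- or right-absorbing element is a non-trivial idempotent and then invokes Proposition~\ref{p:strong-s-2}; you merely spell out the verification ($a^2=a$ by absorbing $a$ itself, and $a=1_M$ would force $M$ to be trivial). No gaps.
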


\begin{proof}
If the monoid $M$ is not trivial and $a \in M$ is left-absorbing (resp.~right-absorbing),
then $a$ is a non-trivial idempotent of $M$.
\end{proof}

From the above discussion, we have the following:

\begin{itemize}
\item The multiplicative monoid $\{0,1\}$ is not strongly sofic.
Note that it is sofic since it is finite. 
\item 
If a set $D$ contains more than one element, then the monoid $\Map(D)$ is not strongly sofic.
Indeed, the monoid $\Map(D)$ is then non-trivial and, as mentioned above, every constant map in $\Map(D)$ is left-absorbing. 
Note that $\Map(D)$ is sofic if and only if $D$ is finite (cf.\ \cite[Proposition~4.1 and Corollary~5.5]{sofic-monoids}.
\item
The multiplicative monoid $\N$ of non-negative integers is not strongly sofic.
Note that it is sofic since it is commutative (cf.~\cite[Proposition~4.3]{sofic-monoids})
and that, by contrast, the additive monoid $\N$ is strongly sofic since it is a submonoid of the sofic group $\Z$.
\item
If $R$ is a non-trivial ring then the multiplicative monoid $R$ is not strongly sofic.
\end{itemize}

\begin{corollary}
\label{c:ssofic-strictly-sofic}
The class of strongly sofic monoids is strictly contained in the class of sofic monoids.
\end{corollary}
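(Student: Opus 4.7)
The plan is to verify the two parts of the statement separately. For the inclusion, there is nothing to do beyond citing Proposition~\ref{p:strongly-sofic-implies-sofic}, which directly says every strongly sofic monoid is sofic. So the entire content of the corollary is witnessed by exhibiting even a single monoid that is sofic but not strongly sofic.

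For strictness, I would produce an explicit example. The cleanest candidate is the multiplicative monoid $M = \{0,1\}$: it is finite, hence sofic (finite monoids are sofic, as noted in the introduction), but $0$ is a non-trivial idempotent (since $0^2 = 0$ and $0 \neq 1_M$), so by Proposition~\ref{p:strong-s-2} it is not strongly sofic. Equivalently, one could invoke Corollary~\ref{c:zero-not-ss} since $0$ is a zero element of $M$. Either path takes one line. Alternative witnesses already listed in the bullet points before the corollary statement (the multiplicative monoid $\N$, or $\Map(D)$ with $1 < |D| < \infty$) work identically: each is known to be sofic and each contains either a zero or a non-trivial idempotent, so Proposition~\ref{p:strong-s-2} rules out strong soficity.

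There is no real obstacle here; the corollary is essentially a summary of what was just established in Proposition~\ref{p:strongly-sofic-implies-sofic} together with Proposition~\ref{p:strong-s-2} and the surrounding bullet list. The only thing to be slightly careful about is to name an example whose soficity is genuinely already on the table in the paper (finiteness or commutativity), so that no extra work is smuggled in. I would therefore structure the proof in two short sentences: first cite Proposition~\ref{p:strongly-sofic-implies-sofic} for the inclusion, then cite the multiplicative monoid $\{0,1\}$ together with Proposition~\ref{p:strong-s-2} (or Corollary~\ref{c:zero-not-ss}) for strictness.
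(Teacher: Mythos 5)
Your proposal is correct and matches the paper's (implicit) argument: the inclusion is Proposition~\ref{p:strongly-sofic-implies-sofic}, and strictness is witnessed exactly as in the bullet list preceding the corollary, e.g.\ by the multiplicative monoid $\{0,1\}$, which is sofic because it is finite but not strongly sofic by Proposition~\ref{p:strong-s-2} (or Corollary~\ref{c:zero-not-ss}). Nothing further is needed.
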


From Proposition~\ref{p:strong-s-2} we also deduce the following characterization of strong soficity for finite monoids.

\begin{corollary}
\label{c:finite-monoid-strong-s}
Let $M$ be a finite monoid. Then the following conditions are equivalent.
\begin{enumerate}[{\rm (a)}]
\item $M$ is a group;
\item $M$ is a strongly sofic monoid;
\item $M$ has no non-trivial idempotents.
\end{enumerate}
\end{corollary}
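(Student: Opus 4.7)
The plan is to prove the cycle of implications $(a) \Rightarrow (b) \Rightarrow (c) \Rightarrow (a)$, each of which should be short since most of the heavy lifting has been done earlier in the section.

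For $(a) \Rightarrow (b)$, I would observe that any finite group $G$ is trivially sofic as a group: given a finite $K \subset G$ and $\varepsilon > 0$, one can take $D \coloneqq G$ and $\sigma \colon G \to \Sym(G) \subset \Map(G)$ the left regular representation, which satisfies (SM1)--(SM3) exactly (with $\varepsilon = 0$), and (SM4) with $\Delta_K = 1$ since each $\sigma(g)$ is a bijection. Then invoke Proposition~\ref{p:strongly-sofic-groups-monoids} to conclude that $G$ is strongly sofic as a monoid. For $(b) \Rightarrow (c)$, this is precisely the contrapositive of Proposition~\ref{p:strong-s-2}, which asserts that any monoid possessing a non-trivial idempotent fails to be strongly sofic.

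The only implication requiring a fresh argument is $(c) \Rightarrow (a)$, and this is a classical fact about finite monoids. The plan is as follows. Let $m \in M$ be arbitrary. Since $M$ is finite, the sequence $m, m^2, m^3, \ldots$ eventually repeats, so there exist integers $1 \leq i < j$ with $m^i = m^j$. Setting $p \coloneqq j - i \geq 1$, it follows that $m^i = m^{i+kp}$ for every $k \geq 0$. Choosing $k$ large enough that $kp \geq i$, one verifies that $e \coloneqq m^{kp}$ is an idempotent: indeed $e^2 = m^{2kp} = m^{kp + (kp - i) + i} = m^{kp - i} \cdot m^i \cdot m^{kp - i + i} / \ldots$; more cleanly, pick any $n \geq 1$ that is a multiple of $p$ with $n \geq i$, and then $m^{2n} = m^{n + (n - i) + i} = m^{n-i} \cdot m^{n+i}$; using $m^{n+i} = m^i$ (which follows from $n$ being a multiple of $p$), we get $m^{2n} = m^{n-i} \cdot m^i = m^n$, so $m^n$ is idempotent. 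By assumption (c), the only idempotent of $M$ is $1_M$, hence $m^n = 1_M$. Therefore $m$ admits $m^{n-1}$ as a two-sided inverse, so every element of $M$ is invertible and $M$ is a group.

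I expect no serious obstacle. The only mildly delicate point is the classical lemma that in a finite monoid every element has an idempotent power; this requires being a little careful in choosing the exponent to ensure that both $m^{2n} = m^n$ holds and that $n \geq 1$. Once that is in place, the three implications close up cleanly and the corollary follows.
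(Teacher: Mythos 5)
Your proposal is correct and follows essentially the same route as the paper: (a)$\Rightarrow$(b) via Proposition~\ref{p:strongly-sofic-groups-monoids}, (b)$\Rightarrow$(c) via Proposition~\ref{p:strong-s-2}, and (c)$\Rightarrow$(a) via the classical Frobenius-type argument that every element of a finite monoid has an idempotent power. The only cosmetic difference is that you argue (c)$\Rightarrow$(a) directly (forcing $m^n=1_M$ and hence invertibility), whereas the paper proves the contrapositive by exhibiting a non-trivial idempotent power of a non-invertible element; your exponent bookkeeping in the ``cleaner'' version is sound.
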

\begin{proof}
The implication (a) $\implies$ (b) follows from Proposition~\ref{p:strongly-sofic-groups-monoids} since every finite group is sofic.
The implication (b) $\implies$ (c) follows from Proposition~\ref{p:strong-s-2}.
Finally, the implication (c) $\implies$ (a) is a classical result from the theory of semigroups (see
\cite[Theorem 1.9 and Exercise 1]{clifford-preston}. Clifford and Preston attribute it to Frobenius).
For the convenience of the reader and the sake of completeness let us give a proof of this result.
Suppose that $M$ is not a group. Then there exists $a \in M$ which is not invertible. 
As $M$ is finite, there exist integers $1 \leq m < n$ such that $a^m = a^n$. Set $t \coloneqq n-m \geq 1$. 
Then $a^m = a^n = a^{m+t} = a^ma^t = a^{m+t}a^t = a^{m+2t} = \cdots = a^{m+mt}$.
Setting $e \coloneqq a^{mt}$ we thus have
\[
e^2 = a^{2mt} = a^{m+mt}a^{m(t-1)} = a^m a^{m(t-1)} = a^{mt} = e.
\]
As $a$ is not invertible and $mt \geq 1$, we have $a^{mt} \not= 1_M$. This shows that $e$ is a non-trivial idempotent of $M$.
\end{proof}

\begin{corollary}
Let $M$ be a strongly sofic monoid. 
Then every non-invertible element in $M$ has infinite order.
\end{corollary}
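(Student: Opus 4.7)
The plan is to argue by contraposition: assuming $a \in M$ has finite order, we will show that $a$ must be invertible in $M$. Here "finite order" is interpreted in the natural way, namely that the cyclic submonoid
\[
\langle a \rangle \coloneqq \{1_M, a, a^2, a^3, \ldots\} \subset M
\]
is finite (equivalently, there exist integers $0 \leq m < n$ with $a^m = a^n$).

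The whole argument is a short chain of already-established results. First, by Proposition~\ref{p:submonoids-strongly-sofic}, the submonoid $\langle a \rangle$ of the strongly sofic monoid $M$ is itself strongly sofic. Since we are assuming it is also finite, Corollary~\ref{c:finite-monoid-strong-s} forces $\langle a \rangle$ to be a group. The identity of this group, being the identity of a submonoid of $M$ containing $1_M$, must coincide with $1_M$. Hence there exists $b \in \langle a \rangle \subset M$ with $ab = ba = 1_M$, so $a$ is invertible in $M$, contradicting the hypothesis.

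Equivalently, one could bypass Corollary~\ref{c:finite-monoid-strong-s} and quote Proposition~\ref{p:strong-s-2} directly: if $a$ is non-invertible and $\langle a \rangle$ is finite, then the Frobenius-style calculation reproduced in the proof of Corollary~\ref{c:finite-monoid-strong-s} produces a non-trivial idempotent $e = a^{mt} \in \langle a \rangle \subset M$ (with $t = n - m$), whereas by Proposition~\ref{p:strong-s-2} no strongly sofic monoid admits such an idempotent.

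There is essentially no obstacle here; the only mildly subtle point is the matching of identity elements noted above, ensuring that invertibility in $\langle a \rangle$ entails invertibility in the ambient monoid $M$. Everything else is an immediate invocation of the closure property (Proposition~\ref{p:submonoids-strongly-sofic}) and the structure theorem for finite strongly sofic monoids (Corollary~\ref{c:finite-monoid-strong-s}).
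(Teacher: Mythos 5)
Your proof is correct and follows essentially the same route as the paper, which also combines Proposition~\ref{p:submonoids-strongly-sofic} with Corollary~\ref{c:finite-monoid-strong-s} (applied to the submonoid generated by $a$), merely stated in contrapositive form. Your extra remark that the identity of the group $\langle a\rangle$ must be $1_M$, so that invertibility there gives invertibility in $M$, is a small detail the paper leaves implicit, and your alternative via Proposition~\ref{p:strong-s-2} is the same idea unpacked.
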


\begin{proof}
If $a$ is a non-invertible element of finite order in a monoid $M$,
then the monoid generated by $a$ is not strongly sofic by Corollary~\ref{c:finite-monoid-strong-s}.
Therefore $M$ is not strongly sofic either, by Proposition~\ref{p:submonoids-strongly-sofic}.
\end{proof}

We have seen in Corollary~\ref{c:embeddable-group} that every monoid which is embeddable into a sofic group is strongly sofic.
The following example shows that there exist strongly sofic monoids that cannot be embedded into a group.

\begin{example}[A strongly sofic monoid that cannot be embedded into a group]
\label{ex:no-group}
Consider the monoid $\Z[X]$ of all integral polynomials in one indeterminate with the monoid operation
given by the composition of polynomials.
The identity element of $\Z[X]$ is the monomial $X$.
Observe that the set $M \coloneqq \Z[X] \setminus \Z$ of all non-constant polynomials is a submonoid of $\Z[X]$.
Indeed, $X \in M$ and, if $P, Q \in M$, then $\deg (P\circ Q) = (\deg(P))(\deg(Q)) > 0$, so that $P \circ Q \in M$.
Let us show that the monoid $M$ is strongly sofic. Let $K$ be a non-empty finite subset of $M$ and set
$\Delta_K \coloneqq \max\{\deg(P): P \in K\} \geq 1$.
Let also $\varepsilon >0$.
Choose a finite field $D$ of cardinality $|D| > \Delta_K/\varepsilon$ 
and consider the map $\sigma \colon M \to \Map(D)$ which sends each polynomial $P \in M$ to the corresponding evaluation function 
$v_P \colon D \to D$ given by $v_P(a) \coloneqq P(a)$ for all $a \in D$.
It is clear that $\sigma$ is a morphism of monoids.
Therefore Conditions (SM1) and (SM2) of Definition~\ref{d:strongly-sofic-monoid} are satisfied.
Moreover, if $P, Q\in K$ are distinct, then 
\[
d^\Ham_F(v_P,v_Q) = 1 - \frac{1}{|D|} |\{a\in D: P(a)=Q(a)\}| \geq 1 -  \frac{1}{|D|} \Delta_K > 1 - \varepsilon,
\]
since the number of solutions in $D$ of the equation $P(a) = Q(a)$ is at most $\deg(P-Q) \leq \Delta_K < |D| \varepsilon$.
This shows that (SM3) is also satisfied.
Finally, for all $P \in K$, we have $0 <\deg(P) \leq \Delta_K$ and hence $|(v_P)^{-1}(a)| = |\{b \in D: P(b)=a\}| \leq \deg(P)
\leq \Delta_K$ for all $a \in D$, and Condition (SM4) follows as well. We conclude that $M$ is strongly sofic.
\par
Since $X^2 \circ (-X) = (-X)^2 = X^2 = X^2 \circ X$ but $-X \neq X$, the monoid $M$ is not left-cancellative.
Therefore the monoid $M$ cannot be embedded into a group.
\par
Remark that $M$ is right-cancellative.
Indeed, suppose that $P, Q, R \in M$ satisfy $P \circ R = Q \circ R$.
As the polynomial $R$ is not constant, it takes infinitely many values $R(x)$, $x \in \mathbb{R}$. 
Consequently, from $P(R(x)) = Q(R(x))$ for all $x \in \R$, we deduce that $P = Q$.
\par
Note also that, by suitably adapting the argument for (SM3) above, one proves that $M$ is residually finite. 
As all residually finite moinoids are surjunctive by \cite[Theorem~5.17]{surjunctive-monoids}, 
we deduce that $M$ is surjunctive (a fact that we shall alternatively deduce from our main result, namely, Theorem~\ref{t:main}).
\par
Moreover, denoting by $G$ the additive group of rational numbers (which is sofic but not residually finite), 
then $M \times G$ yields an example of a strongly sofic monoid which is not residually finite and does not embed into a group either.
\end{example}

\begin{proposition}
\label{p:equiv-def-strongly-sofic}
Let $M$ be a monoid.
Then the following conditions are equivalent:
\begin{enumerate}[\rm (a)]
\item
$M$ is strongly sofic;
\item
there exist a directed set $I$ and  a net $\Sigma = (D_i,\sigma_i)_{i \in I}$, where $D_i$ is a non-empty finite set and
$\sigma_i \colon M \to \Map(D_i)$ is a map for all $i \in I$, such that:
\begin{enumerate}[{\rm ({SA}1)}]
\item $\sigma_i(1_M) = \Id_{D_i}$ for all $i \in I$;
\item $\lim_{i \in I}  d_{D_i}^\Ham(\sigma_i(m_1m_2),\sigma_i(m_1)\sigma_i(m_2)) = 0$ for all $m_1,m_2 \in M$;
\item $\lim_{i \in I}  d_{D_i}^\Ham(\sigma_i(m_1),\sigma_i(m_2)) = 1$ for all distinct $m_1, m_2 \in M$.
\item for every finite subset $K \subset M$, there exists an integer $\Delta_K \geq 1$ such that
$|\sigma_i(k)^{-1}(v)| \leq \Delta_K$ for all $i \in I$, $k \in K$, and $v \in D_i$.
\end{enumerate}
\end{enumerate}
\end{proposition}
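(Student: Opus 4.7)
The plan mirrors the structure of Proposition~\ref{p:equiv-def-sofic}, with (SA4) serving as an additional fourth clause.

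For (b)$\Rightarrow$(a): fix a finite $K \subset M$ and let $\Delta_K$ be the integer from (SA4). For any $\varepsilon > 0$, the finiteness of $K$ combined with (SA2) and (SA3) (and the directedness of $I$) yields a common index $i_0 \in I$ at which both $d_{D_{i_0}}^\Ham(\sigma_{i_0}(k_1 k_2), \sigma_{i_0}(k_1)\sigma_{i_0}(k_2)) \leq \varepsilon$ for all $k_1, k_2 \in K$ and $d_{D_{i_0}}^\Ham(\sigma_{i_0}(k_1), \sigma_{i_0}(k_2)) \geq 1 - \varepsilon$ for distinct $k_1, k_2 \in K$. Setting $D \coloneqq D_{i_0}$ and $\sigma \coloneqq \sigma_{i_0}$ satisfies (SM1)--(SM4): the first from (SA1), the middle two from the choice of $i_0$, and the last from (SA4) specialized at $i = i_0$.

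For (a)$\Rightarrow$(b): I would take, following Proposition~\ref{p:equiv-def-sofic}, the directed set $I \coloneqq \{(K, \varepsilon) : K \subset M \text{ finite}, \varepsilon > 0\}$ with the partial order $(K_1, \varepsilon_1) \leq (K_2, \varepsilon_2) \iff K_1 \subset K_2 \text{ and } \varepsilon_2 \leq \varepsilon_1$; this is directed, since $(K_1 \cup K_2, \min(\varepsilon_1, \varepsilon_2))$ is a common upper bound. For each $i = (K, \varepsilon) \in I$, strong soficity supplies an integer $\Delta_K$ (depending on $K$ alone) and a pair $(D_i, \sigma_i)$ satisfying (SM1)--(SM4); I would further stipulate $\sigma_i(m) \coloneqq \Id_{D_i}$ for $m \notin K$, which preserves all four conditions since (SM2)--(SM4) only constrain $\sigma_i$ on elements of $K$ and (SM1) concerns $1_M$. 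Conditions (SA1)--(SA3) then follow verbatim from the computation in the proof of Proposition~\ref{p:equiv-def-sofic}.

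The technical crux is (SA4): for a fixed finite $K^* \subset M$, one must produce a single integer $\Delta_{K^*}$ bounding $|\sigma_i(k)^{-1}(v)|$ uniformly over \emph{all} $i \in I$, $k \in K^*$, and $v \in D_i$. For $k \in K^* \setminus K$ (where $i = (K, \varepsilon)$) the bound is trivially $1$ by the above modification; for $k \in K^* \cap K$ the naive bound from (SM4) is $\Delta_K$, which depends on $K$ and hence on $i$. I expect the fix to require refining each witness $\sigma_i$ so that the preimage cardinalities at elements $k \in K$ are governed by constants $\Delta_{\{k\}}$ attached to each individual element—for instance by combining strong-sofic witnesses for singletons via a product-type construction in the spirit of Proposition~\ref{p:prod-strongly-sofic}—and then taking $\Delta_{K^*} \coloneqq \max_{k \in K^*} \Delta_{\{k\}}$. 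Securing element-wise preimage bounds while preserving the approximate multiplicativity (SM2) and the near-maximal separation (SM3) is where I anticipate the main effort of the proof to lie.
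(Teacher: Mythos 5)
Your (b)$\Rightarrow$(a) direction is correct and is exactly the intended argument: the integer $\Delta_K$ furnished by (SA4) is independent of $\varepsilon$, so it can serve as the constant required by Definition~\ref{d:strongly-sofic-monoid}, and (SM1)--(SM3) are extracted from (SA1)--(SA3) at a single large index. The (a)$\Rightarrow$(b) direction, however, is not complete, and the two concrete devices you introduce are both flawed. First, redefining $\sigma_i(m) \coloneqq \Id_{D_i}$ for all $m \notin K$ is not harmless: condition (SM2) constrains $\sigma_i$ at the products $k_1k_2$ with $k_1,k_2 \in K$, and these products need not lie in $K$, so overwriting $\sigma_i$ there destroys (SM2). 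The most you may safely overwrite is the complement of $\{1_M\} \cup K \cup K^2$; but then your claim that every $k \in K^* \setminus K$ has preimage bound $1$ fails for $k \in K^* \cap (K^2 \setminus K)$, where neither the modification nor (SM4) gives any control.

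Second, and more importantly, you explicitly leave the verification of (SA4) open, and the repair you sketch cannot work as described: the product construction of Proposition~\ref{p:prod-strongly-sofic} multiplies preimage cardinalities, since $|\sigma(k)^{-1}(v)| = \prod_{j}|\sigma_j(k_j)^{-1}(v_j)|$, so combining a witness for $(K,\varepsilon)$ with singleton witnesses only worsens the $K$-dependent bound $\Delta_K$ rather than replacing it by an element-wise one; and singleton witnesses alone cannot be assembled into a map satisfying (SM2)--(SM3) for the whole of $K$, because those conditions couple distinct elements of $K$. Thus the crux you correctly identify --- obtaining, for a fixed $k$, a preimage bound that is uniform over all indices $i=(K,\varepsilon)$, when (SM4) only supplies the constant $\Delta_K$ attached to the first coordinate of $i$ --- is not resolved by your argument. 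The paper itself omits the proof, asserting that it follows the lines of Proposition~\ref{p:equiv-def-sofic} with the $\varepsilon$-independent constant $\Delta_K$ of Definition~\ref{d:strongly-sofic-monoid} fed directly into (SA4); your observation that this requires a uniform-in-$i$ choice is a legitimate point of care, but as written your proposal neither completes that route nor provides a correct alternative, so the direction (a)$\Rightarrow$(b) remains unproved in your write-up.
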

\begin{proof}
The proof follows the same lines as the one of Proposition~\ref{p:equiv-def-sofic} and is therefore omitted. 
\end{proof}

A net $\Sigma = (D_i,\sigma_i)_{i \in I}$ as in Proposition \ref{p:equiv-def-strongly-sofic}.(b) is called a 
\emph{strong sofic approximation} of the strongly sofic monoid $M$.
Every strong sofic approximation is trivially a sofic approximation.

\section{Sofic topological entropy for monoid actions}
\label{s:topo-invar}
Let $X$ be a compact topological space equipped with a continuous action of a sofic monoid $M$.
Suppose that $\rho$ is a continuous pseudometric on $X$.

Given a non-empty finite set $D$, consider the set $X^D = \{\varphi  \colon D \to X\}$ of all maps from $D$ to $X$.
Equip $X^D$ with the product topology and the diagonal action of $M$ given by $(m \varphi)(v) \coloneqq m \varphi(v)$ 
for all $\varphi \in X^D$ and $v \in D$. Note that the action of $M$ on $X^D$ is continuous.
Define the continuous pseudometrics $\rho_2^D$ and $\rho_\infty^D$ on $X^D$  
by setting
\begin{align*}
\rho_2^D(\varphi,\psi) &\coloneqq \left(\frac{1}{|D|} \sum_{v \in D}(\rho (\varphi(v),\psi(v)))^2 \right)^{1/2}  \text{  and} \\
\rho_\infty^D(\varphi,\psi) &\coloneqq \max_{v \in D} \rho(\varphi(v),\psi(v)) 
\end{align*}
for all $\varphi, \psi \in X^D$. 
\par
Given a finite subset $F \subset M$, a map $\sigma \colon M \to \Map(D)$,
and a real number $\delta > 0$, we set 
\begin{equation}
\label{e:def-MAP}
\Map(X,M,\rho,F,\delta,\sigma) \coloneqq \{\varphi \in X^D:
\rho_2^D(\varphi \circ \sigma(m), m \varphi) \leq \delta \mbox{ for all }m \in F\}.
\end{equation}
Observe that $\Map(X,M,\rho,F,\delta,\sigma)$ is closed in $X^D$
since $\rho_2^D$ is continuous.
Given $\varepsilon >0$, we denote by $N_\varepsilon(\Map(X,M,\rho,F,\delta,\sigma),\rho_\infty^D)$ the maximal cardinality of a 
$(\rho_\infty^D,\varepsilon)$-separated subset $Z \subset \Map(X,M,\rho,F,\delta,\sigma)$.
Observe that since $X^D$ is compact, $N_\varepsilon(\Map(X,M,\rho,F,\delta,\sigma),\rho_\infty^D)$ is finite 
by Lemma~\ref{l:separated-compact}.
\par
Let $\Sigma=(D_i, \sigma_i)_{i \in I}$ be a sofic approximation of $M$.
We successively define
\begin{align*}
h_\Sigma(X,M,\varepsilon,\rho,F,\delta) &\coloneqq
\limsup_{i \in I} \frac{1}{|D_i|} \log N_\varepsilon(\Map(X,M,\rho,F,\delta,\sigma_i),\rho_\infty^{D_i}),\\
h_\Sigma(X,M,\varepsilon, \rho, F) &\coloneqq \inf_{\delta > 0} h_{\Sigma }(X,M,\varepsilon,\rho,F,\delta), \text{  and} \\
h_\Sigma(X,M,\varepsilon,\rho) &= \inf_{F} h_\Sigma (X,M,\varepsilon,\rho,F),
\end{align*}
where the last infimum is over all finite subsets $F$ of $M$.
\par
The \emph{sofic topological entropy} of $(X,M)$ with respect to $\Sigma$ and $\rho$ is finally defined as
\begin{equation}
\label{e:entropy}
h_\Sigma(X,M,\rho) \coloneqq \sup_{\varepsilon > 0} h_\Sigma(X,M,\varepsilon,\rho).  
\end{equation}

\begin{remark}
\label{r:entropie-epsilon-limite}
Note that the map $\delta \mapsto N_\varepsilon(\Map(X,M,\rho,F,\delta,\sigma), \rho_\infty^{D_i})$ is non-decreasing.
This implies that the map $\delta \mapsto h_\Sigma(X,M,\varepsilon,\rho,F,\delta)$ is also non-decreasing and hence that
\[
h_\Sigma(X,M,\varepsilon,\rho,F) = \inf_{0 < \delta \leq \delta_0} h_\Sigma(X,M,\varepsilon,\rho,F,\delta)
\] 
for any $\delta_0 > 0$, and therefore
\[
h_\Sigma(X,M,\varepsilon,\rho,F) = \lim_{\delta \to 0} h_\Sigma(X,M,\varepsilon,\rho,F,\delta).
\]
Similarly, the map $\varepsilon \mapsto N_\varepsilon(\Map(X,M,\rho,F,\delta,\sigma), \rho_\infty^{D_i})$ is non-increasing.
This implies that the map $\varepsilon \mapsto h_\Sigma(X,M,\varepsilon,\rho)$ is also non-increasing and hence that
\[
h_\Sigma(X,M, \rho) = \sup_{0 < \varepsilon \leq \varepsilon_0} h_\Sigma(X,M,\varepsilon,\rho)
\] 
for any $\varepsilon_0 > 0$, and therefore
\begin{equation*}
h_\Sigma(X,M, \rho) = \lim_{\varepsilon \to 0} h_\Sigma(X,M,\varepsilon,\rho).  
\end{equation*}
\end{remark}

In the following, we show that, provided that the monoid $M$ is strongly sofic, $\Sigma$ is a strong sofic approximation of $M$, and
$(X,M)$ admits a dynamically generating continuous pseudometric, then
$h_\Sigma(X,M, \rho) \in \{-\infty\} \cup [0,\infty]$ does not depend on the choice of the dynamically generating continuous pseudometric 
$\rho$ for $(X,M)$ (see \cite[Proposition~10.25]{kerr-li-book} for the corresponding group theoretical original argument).
\par
We start by showing that in the definition of entropy \eqref{e:entropy} one can replace $$N_\varepsilon(\Map(X,M,\rho,F,\delta,\sigma),\rho_\infty^D),$$ which is more suitable for concrete applications, by $$N_\varepsilon(\Map(X,M,\rho,F,\delta,\sigma),\rho_2^D),$$ which
gives an easier argument for the independence we alluded to above. To prove this, we shall make use of the following application of
Stirling's approximation formula. It was widely used in \cite{kerr-li} (see \cite[Lemma~10.1 and Proposition~10.2]{kerr-li-book} and 
\cite[Lemma~A.1]{CCL2}). 
\begin{lemma} 
\label{lemma:Stirling}
Let $0 < t <1/2$. Then there exists $d_0 = d_0(t) \in \N$ and $\beta = \beta(t) \in (0, +\infty)$ with 
\begin{equation}
\label{e:stirling-2}
\lim_{t \to 0} \beta(t) = 0
\end{equation}
such that 
\begin{equation}
\label{e:stirling}
\sum_{j=0}^{\floor{t d}} {{d}\choose{j}} \leq e^{\beta d}
\end{equation}
for all $d \in \N$ such that $d \geq d_0$.
\end{lemma}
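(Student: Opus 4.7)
My plan is to set $\beta(t) \coloneqq -t\ln t - (1-t)\ln(1-t)$, the natural-logarithm binary entropy function. Elementary calculus shows that $\beta$ is continuous and strictly positive on $(0,1/2)$ (it vanishes only at the endpoints $0$ and $1$), and the classical limit $t\ln t \to 0$ as $t \to 0^+$ gives $\lim_{t \to 0^+} \beta(t) = 0$, i.e.\ condition \eqref{e:stirling-2}. It remains only to establish \eqref{e:stirling}, which I plan to do uniformly in $d \geq 1$ (so that one may then take $d_0 \coloneqq 1$).

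For \eqref{e:stirling} I would use the standard binomial-probability trick. Expanding
\begin{equation*}
1 \; = \; (t + (1-t))^d \; = \; \sum_{j=0}^{d} \binom{d}{j} t^j(1-t)^{d-j}
\end{equation*}
and discarding all terms with $j > \lfloor td \rfloor$ (which are nonnegative), one gets a partial-sum inequality. Because $0 < t < 1/2$ implies $t/(1-t) \in (0,1)$, the factor $t^j(1-t)^{d-j} = (1-t)^d (t/(1-t))^j$ is a decreasing function of $j$, so every remaining summand is at least $t^{\lfloor td \rfloor}(1-t)^{d-\lfloor td\rfloor}$. Factoring this common lower bound out and inverting yields
\begin{equation*}
\sum_{j=0}^{\lfloor td \rfloor} \binom{d}{j} \; \leq \; t^{-\lfloor td \rfloor}(1-t)^{-(d-\lfloor td\rfloor)}.
\end{equation*}

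The final step is to replace the floor by $td$. Rewriting the right-hand side as $(1-t)^{-d}\bigl(t/(1-t)\bigr)^{-\lfloor td\rfloor}$, and noting that $(t/(1-t))^{-x}$ is an increasing function of $x$ (since $t/(1-t) < 1$ means its reciprocal exceeds $1$), the bound $\lfloor td \rfloor \leq td$ gives
\begin{equation*}
t^{-\lfloor td \rfloor}(1-t)^{-(d-\lfloor td\rfloor)} \; \leq \; t^{-td}(1-t)^{-(d-td)} \; = \; \exp\bigl(d\,\beta(t)\bigr),
\end{equation*}
which is exactly \eqref{e:stirling}.

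I do not foresee any serious obstacle: the argument is entirely elementary and gives the inequality for all $d \geq 1$, making the existence of $d_0$ a formality. The only point requiring a little care is the monotonicity step at the end, where naively replacing $\lfloor td \rfloor$ by $td$ in the two factors $t^{-\lfloor td\rfloor}$ and $(1-t)^{-(d-\lfloor td\rfloor)}$ separately would push the inequalities in opposite directions; one has to combine the factors first (via the substitution $(1-t)^d(t/(1-t))^j$) and then apply monotonicity once to the combined expression.
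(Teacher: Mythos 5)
Your proof is correct. The weighted binomial expansion $1=\sum_{j=0}^{d}\binom{d}{j}t^{j}(1-t)^{d-j}$, the monotonicity in $j$ of $(1-t)^{d}\bigl(t/(1-t)\bigr)^{j}$ for $t<1/2$ (which bounds every retained summand from below by $t^{\floor{td}}(1-t)^{d-\floor{td}}$), and the final replacement of $\floor{td}$ by $td$ performed on the combined factor $\bigl((1-t)/t\bigr)^{\floor{td}}$ all check out, and you correctly identified the one delicate point (one must not replace the floor separately in the two factors). Your route is, however, genuinely different from the one the paper relies on: the paper does not prove the lemma but quotes it as an application of Stirling's approximation formula from \cite[Lemma~10.1 and Proposition~10.2]{kerr-li-book} and \cite[Lemma~A.1]{CCL2}, where the sum is controlled via $(\floor{td}+1)\binom{d}{\floor{td}}$ and Stirling's formula estimates the single binomial coefficient up to polynomial error factors, which are then absorbed into $\beta$ by requiring $d\geq d_{0}(t)$; this is precisely why the statement carries a threshold $d_{0}$. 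Your Chernoff-type (entropy) argument is more elementary and sharper in form: it gives \eqref{e:stirling} with $\beta(t)=-t\ln t-(1-t)\ln(1-t)$ exactly and for every $d\geq 1$, so $d_{0}=1$ suffices and \eqref{e:stirling-2} is immediate from $t\ln t\to 0$. For the applications in the paper (Proposition~\ref{p:ent-2-infty} and Theorem~\ref{t:monotonicity}) only the existence of some $\beta(t)\to 0$ is needed, so either approach serves equally well.
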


In \cite[Exercise~9.42]{GKP}, a sharpening of the above result is presented, namely,
the inequality in \eqref{e:stirling} is replaced by $\approx$, with absolute error $O(1)$.
This has a nice probabilistic interpretation in terms of large deviations theory.

The following result extends Proposition~10.23 in~\cite{kerr-li-book}.

\begin{proposition}
\label{p:ent-2-infty}
Let $X$ be a compact topological space equipped with a continuous action of a sofic monoid $M$.
Let $\Sigma = (D_i,\sigma_i)_{i \in I}$ be a sofic approximation of $M$ with $\lim_{i \in I} |D_i| =~\infty$. 
Let $\rho$ be a continuous pseudometric on $X$.
Then 
\begin{equation}
\label{e:egalite-2-infty}
h_\Sigma(X,M,\rho) = \sup_{\varepsilon > 0} \inf_{F} \inf_{\delta > 0} \limsup_{i \in I} \frac{1}{|D_i|} \log N_\varepsilon(\Map(X,M,\rho,F,\delta,\sigma_i),\rho_2^{D_i}),
\end{equation}
where the infimum $\inf_{F}$ is taken over all finite subsets $F \subset M$.
\end{proposition}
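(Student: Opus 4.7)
The plan is to establish the equality as two opposite inequalities. The direction
\[
\sup_{\varepsilon > 0} \inf_{F} \inf_{\delta > 0} \limsup_{i \in I} \frac{1}{|D_i|} \log N_\varepsilon(\Map(X,M,\rho,F,\delta,\sigma_i),\rho_2^{D_i}) \;\leq\; h_\Sigma(X,M,\rho)
\]
is immediate from the pointwise bound $\rho_2^D \leq \rho_\infty^D$ on $X^D$: every $(\rho_2^D,\varepsilon)$-separated subset is \emph{a fortiori} $(\rho_\infty^D,\varepsilon)$-separated, hence $N_\varepsilon(\cdot,\rho_2^{D_i}) \leq N_\varepsilon(\cdot,\rho_\infty^{D_i})$, and this propagates through $\inf_F$, $\inf_\delta$, $\limsup_i$, and $\sup_\varepsilon$.

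The substantive content lies in the reverse inequality. I would fix $\varepsilon > 0$, a finite subset $F \subset M$, a $\delta > 0$, and a parameter $t \in (0, 1/2)$; set $\alpha \coloneqq \varepsilon/4$ and $\varepsilon' \coloneqq \varepsilon \sqrt{t}/4$; and, using Lemma~\ref{l:separated-compact}, choose an $\alpha$-net $\{y_1, \dots, y_M\} \subset X$ (a maximal $(\rho,\alpha)$-separated subset) of cardinality $M = N_\alpha(X,\rho) < \infty$, so that every $x \in X$ satisfies $\rho(x, y_j) < \alpha$ for some $j$. Abbreviating $W_i \coloneqq \Map(X,M,\rho,F,\delta,\sigma_i)$, the key estimate I would prove is
\[
N_\varepsilon(W_i, \rho_\infty^{D_i}) \;\leq\; e^{(\beta(t) + t \log M) |D_i|} \, N_{\varepsilon'}(W_i, \rho_2^{D_i})
\]
for all sufficiently large $i \in I$ (which exist because $\lim_{i\in I} |D_i| = \infty$). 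To obtain it, I select a maximal $(\rho_\infty^{D_i}, \varepsilon)$-separated subset $Z_i \subset W_i$ and a maximal $(\rho_2^{D_i}, \varepsilon')$-separated subset $Y_i \subset W_i$, and assign to each $\varphi \in Z_i$ some $\pi(\varphi) \in Y_i$ with $\rho_2^{D_i}(\varphi, \pi(\varphi)) \leq \varepsilon'$, available by maximality of $Y_i$. For fixed $\psi \in Y_i$ and $\varphi \in \pi^{-1}(\psi) \cap Z_i$, I define the exceptional set $E_\varphi \coloneqq \{v \in D_i : \rho(\varphi(v), \psi(v)) > \alpha\}$; a Chebyshev-type estimate applied to $\rho_2^{D_i}(\varphi,\psi) \leq \varepsilon'$ yields $|E_\varphi| \leq (\varepsilon'/\alpha)^2 |D_i| = t|D_i|$. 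I encode $\varphi$ by the pair $(E_\varphi, h_\varphi)$, where $h_\varphi \colon E_\varphi \to \{1, \dots, M\}$ is any choice satisfying $\rho(\varphi(v), y_{h_\varphi(v)}) < \alpha$ on $E_\varphi$. A direct triangle-inequality computation then shows that two elements $\varphi_1, \varphi_2 \in \pi^{-1}(\psi) \cap Z_i$ sharing the same encoding satisfy $\rho_\infty^{D_i}(\varphi_1, \varphi_2) \leq \varepsilon/2 < \varepsilon$, contradicting the $\varepsilon$-separation of $Z_i$; hence the encoding is injective on $\pi^{-1}(\psi) \cap Z_i$. Bounding the number of admissible $E_\varphi$ by $\sum_{j \leq \lfloor t|D_i|\rfloor} \binom{|D_i|}{j} \leq e^{\beta(t)|D_i|}$ via Lemma~\ref{lemma:Stirling} and the number of maps $h_\varphi$ by $M^{t|D_i|}$, and summing over $\psi \in Y_i$, yields the displayed estimate.

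Taking $\frac{1}{|D_i|}\log$, passing to $\limsup_{i \in I}$ and then to $\inf_F \inf_\delta$ (the additive constant $\beta(t) + t \log M$ passes through the infima), one obtains
\[
h_\Sigma(X,M,\varepsilon,\rho) \;\leq\; \inf_F \inf_{\delta > 0} \limsup_{i \in I} \frac{1}{|D_i|} \log N_{\varepsilon'}(W_i, \rho_2^{D_i}) + \beta(t) + t \log M.
\]
Since $\varepsilon' > 0$, the first term on the right is bounded above by the right-hand side of~\eqref{e:egalite-2-infty}. Letting $t \to 0$ makes $\beta(t) + t \log M \to 0$ by Lemma~\ref{lemma:Stirling} (with $M$ depending only on $\varepsilon$, not on $t$), giving $h_\Sigma(X,M,\varepsilon,\rho) \leq$ the right-hand side of~\eqref{e:egalite-2-infty}; a final $\sup_{\varepsilon > 0}$ completes the proof. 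The main obstacle will be the combinatorial encoding step together with the simultaneous balancing of the parameters $\alpha$, $\varepsilon'$, and $t$, so that the Stirling error $\beta(t)$ and the alphabet error $t \log M$ vanish in the limit $t \to 0$, while the auxiliary scale $\varepsilon'$ is permitted to shrink to $0$ under the outer supremum on the right.
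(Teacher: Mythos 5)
Your proposal is correct and follows essentially the same route as the paper's proof (itself modelled on Kerr--Li): the easy direction from $\rho_2^D\leq\rho_\infty^D$, and for the reverse direction a maximal $(\rho_2^D,\varepsilon')$-separated (hence spanning) set, with each fiber controlled by encoding the exceptional set where $\rho>\alpha$ together with a map into a finite $(\rho,\alpha)$-net, the count being handled by Lemma~\ref{lemma:Stirling} and $\lim_{i\in I}|D_i|=\infty$. The only difference is bookkeeping: you use a single auxiliary parameter $t\to0$ with $\alpha=\varepsilon/4$, $\varepsilon'=\varepsilon\sqrt{t}/4$, whereas the paper runs the analogous two-scale argument with parameters $\varepsilon\leq\kappa<1/2$ and lets $\varepsilon\to0$ before $\kappa\to0$; both yield the same vanishing error terms.
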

\begin{proof}
Let $D$ be a non-empty finite set. Given $\varphi, \psi \in X^D$ and setting $m \coloneqq \rho_\infty^D(\varphi,\psi)$, we have
\[
\rho_2^D(\varphi,\psi) = \left(\frac{1}{|D|} \sum_{v \in D}(\rho (\varphi(v),\psi(v)))^2 \right)^{1/2} \leq
\left(\frac{1}{|D|} \sum_{v \in D}m^2 \right)^{1/2} = m = \rho_\infty^D(\varphi,\psi).
\]
This shows that $\rho_2^D \leq \rho_\infty^D$. Consequently, given $\varepsilon > 0$, we have that any 
$(\rho_2^D,\varepsilon)$-separated subset $Z \subset X^D$ is $(\rho_\infty^D,\varepsilon)$-separated. 
We deduce that, given a finite subset $F \subset M$, $\delta > 0$, and $\sigma \colon M \to \Map(D)$, the inequality
$$N_\varepsilon(\Map(X,M,\rho,F,\delta,\sigma),\rho_2^D) \leq N_\varepsilon(\Map(X,M,\rho,F,\delta,\sigma),\rho_\infty^D)$$ holds.
Thus, in \eqref{e:egalite-2-infty} the left hand side is greater than or equal to the right hand side. Let us show the reverse inequality.
\par
Let $\varepsilon, \kappa > 0$ with $\varepsilon \leq \kappa < 1/2$, and let $D$ be a non-empty finite set.
Fix $\varphi \in X^D$.
Suppose that $\psi \in X^D$ is in the open $(\rho_2^D,\varepsilon^2)$-ball around $\varphi$,
that is,
\begin{equation}
\label{e:distance-psi-phi}
\sum_{v \in D} \rho(\varphi(v),\psi(v))^2 < \varepsilon^4 |D|.
\end{equation}
Denoting by $D_\psi' \subset D$ the set of all $v \in D$ such that $\rho(\varphi(v),\psi(v)) \geq \varepsilon$,
we see from \eqref{e:distance-psi-phi} that
\begin{equation}
\label{e:d-prime-psi}
|D_\psi'|\leq  \varepsilon^2 |D|.
\end{equation}
It follows that the set
$D_\psi \coloneqq \{v \in D: \rho(\varphi(v),\psi(v)) < \varepsilon\} = D \setminus D_\psi'$ satisfies
$|D_\psi| = |D \setminus D_\psi'| = |D| - |D_\psi'| \geq (1 - \varepsilon^2)|D|$, so that
\begin{equation}
\label{e:d-psi}
|D_\psi| \geq \lceil(1 - \varepsilon^2)|D|\rceil.
\end{equation}
Let $Z_\varphi \subset X^D$ be a $(\rho_\infty^D, 2 \kappa)$-separated subset of the open $(\rho_2^D,\varepsilon^2)$-ball around $\varphi$.
We claim that
\begin{equation}
\label{e:upper-bound}
|Z_\varphi| \leq  \sum_{j=\lceil(1 - \varepsilon^2)|D|\rceil}^{|D|} {|D| \choose{j}} \cdot  N_\kappa(X,\rho)^{\varepsilon^2 |D|}.
\end{equation}
Indeed, let us fix a maximal $(\rho,\kappa)$-separated subset $U$ of $X$, so that $|U| = N_\kappa(X,\rho)$.
Then, for every $x \in X$, there exists $u_x \in U$ such that $\rho(u_x,x) < \kappa$.
Given $\psi \in Z_\varphi$, define a map $f_\psi \colon D_\psi' \to U$ by setting
$f_\psi(v) \coloneqq u_{\psi(v)}$ for all $v \in D_\psi'$.
Let $\psi_1, \psi_2 \in Z_\varphi$ with $\psi_1 \not= \psi_2$ and $D_{\psi_1} = D_{\psi_2}$.
Let us show that $f_{\psi_1} \not= f_{\psi_2}$.
This will establish~\eqref{e:upper-bound} since every $\psi \in Z_\varphi$ satisfies
$|D_\psi| \geq \floor{(1 - \varepsilon^2)|D|}$ and $|D_\psi'| \leq \varepsilon^ 2|D|$
by~\eqref{e:d-psi} and~\eqref{e:d-prime-psi}, respectively.
Set $D_0 \coloneqq D_{\psi_1} = D_{\psi_2}$.
Then, for all $v \in D_0$, we have $\rho(\psi_1(v), \psi_2(v)) \leq \rho(\psi_1(v), \varphi(v)) +
\rho(\psi_2(v), \varphi(v)) < \varepsilon + \varepsilon = 2 \varepsilon \leq 2 \kappa$.
As $\rho_\infty^D(\psi_1,\psi_2) \geq 2 \kappa$, we deduce that there must exist
$v_0 \in D \setminus D_0$ such that $\rho(\psi_1(v_0), \psi_2(v_0)) \geq 2 \kappa$.
It follows that $f_{\psi_1}(v_0) = u_{\psi_1(v_0)} \neq u_{\psi_2(v_0)} = f_{\psi_2}(v_0)$.
Therefore, $f_{\psi_1} \neq f_{\psi_2}$.
This completes the proof of~\eqref{e:upper-bound}.
\par
By virtue of Lemma~\ref{lemma:Stirling}, there exist $d_0 = d_0(\varepsilon) \in \N$ and $\beta = \beta(\varepsilon) > 0$ such that
$\lim_{\varepsilon \to 0}\beta(\varepsilon)  = 0$ and, provided that $|D| \geq d_0$,
\[
 \sum_{j=\lceil(1 - \varepsilon^2)|D|\rceil}^{|D|} {|D| \choose{j}} 
= \sum_{j=0}^{\floor{\varepsilon^2|D|}} {|D| \choose{j}} \leq e^{\beta |D|}.
\]
From \eqref{e:upper-bound} we deduce that
\begin{equation}
\label{e:upper-bound-2}
|Z_\varphi| \leq e^{\beta |D|} N_\kappa(X,\rho)^{\varepsilon^2 |D|}
\end{equation}
provided that $|D| \geq d_0$.
\par
In order to complete the proof of the proposition,
let $F$ be a finite subset of $M$, let $\delta > 0$, and let $\sigma \colon M \to \Map(D)$.
Suppose that $Z_2$ is a maximal $(\rho_2^D,\varepsilon^2)$-separated subset of $\Map(X,M,\rho,F,\delta,\sigma)$.
Then, for every $\psi \in \Map(X,M,\rho,F,\delta,\sigma)$, there exists some
$\varphi \in Z_2$ such that $\rho_2^D(\varphi,\psi) < \varepsilon^2$.
In particular, if $Z_\infty$ is a maximal $(\rho_\infty^D, 2 \kappa)$-separated subset of $\Map(X,M,\rho,F,\delta,\sigma)$,
then every $\psi \in Z_\infty$ is contained in the open $(\rho_2,\varepsilon^2)$-ball around some $\varphi \in Z_2$.
Since any such ball contains at most $|Z_\varphi|$ elements of $Z_\infty$, we deduce from \eqref{e:upper-bound-2} that
\begin{equation}
\label{e:inegalite-finale}
N_{2 \kappa}(\Map(X,M,\rho,F,\delta,\sigma),\rho_\infty^D) 
\leq e^{\beta |D|} N_\kappa(X,\rho)^{\varepsilon^2 |D|}
N_{\varepsilon^2}(\Map(X,M,\rho,F,\delta,\sigma),\rho_2^D)
\end{equation}
provided that $|D| \geq d_0$.
By replacing respectively $D$ and $\sigma$ by $D_i$ and $\sigma_i$ for $i \in I$ large enough,
then, taking logarithms, dividing by $|D_i|$, and taking, in order, $\limsup_{i \in I}$, $\inf_{\delta > 0}$, $\inf_F$,
and (cf.\ Remark~\ref{r:entropie-epsilon-limite}) $\lim_{\varepsilon \to 0}$, and $\lim_{\kappa \to 0}$,
we deduce from \eqref{e:inegalite-finale} that, for the sofic approximation $\Sigma = (D_i,\sigma_i)_{i \in I}$ of $M$,
the right hand side in \eqref{e:egalite-2-infty} is greater than or equal to the left hand side,
so that they are in fact equal.
\end{proof}
\begin{lemma}
\label{l:maps-1}
Let $X$ be a compact topological space equipped with a continuous action of a monoid $M$.
Let $\rho$ and $\rho'$ be continuous pseudometrics on $X$ and suppose that $\rho'$ is dynamically generating for $(X,M)$.  
Let $\delta > 0$. Then there exist a finite set $F'' \subset M$ and $\delta'' > 0$ such that if 
$x,y \in X$ satisfy $\rho'(sx,sy) < \delta''$ for all $s \in F''$, then $\rho(x,y) < \delta$.  
\end{lemma}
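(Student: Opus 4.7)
The plan is to prove the statement by a standard compactness-and-contrapositive argument on the product space $X \times X$, exploiting the compactness of $X$, the continuity of both pseudometrics and of the action, and the dynamical generation hypothesis on $\rho'$.

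First, I would introduce the ``bad'' set
\[
K \coloneqq \{(x,y) \in X \times X : \rho(x,y) \geq \delta\},
\]
which is closed in $X \times X$ by continuity of $\rho$, hence compact since $X$ itself is compact. I would also note that every $(x,y) \in K$ satisfies $x \neq y$, since $\rho(x,x) = 0 < \delta \leq \rho(x,y)$.

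Next, I would use dynamical generation of $\rho'$ to associate to each $(x,y) \in K$ an element $m = m(x,y) \in M$ with $\alpha(x,y) \coloneqq \rho'(mx,my) > 0$. Since the action is continuous, the map $\phi_m \colon z \mapsto mz$ is continuous on $X$, and so the function $(x',y') \mapsto \rho'(\phi_m(x'),\phi_m(y'))$ is continuous on $X \times X$. It follows that
\[
V_{x,y} \coloneqq \{(x',y') \in X \times X : \rho'(mx', my') > \alpha(x,y)/2\}
\]
is an open neighborhood of $(x,y)$.

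Then I would extract a finite subcover. The family $\{V_{x,y}\}_{(x,y) \in K}$ is an open cover of the compact set $K$, so there exist finitely many pairs $(x_1,y_1), \dots, (x_n,y_n) \in K$ such that $K \subset \bigcup_{i=1}^n V_{x_i,y_i}$. I would then set
\[
F'' \coloneqq \{m(x_i,y_i) : 1 \leq i \leq n\} \subset M \quad \text{and} \quad \delta'' \coloneqq \min_{1 \leq i \leq n} \alpha(x_i,y_i)/2 > 0.
\]

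Finally, I would verify the claim contrapositively: if $\rho(x,y) \geq \delta$, then $(x,y) \in K$, hence $(x,y) \in V_{x_j,y_j}$ for some $j$, giving $\rho'(m(x_j,y_j) x, m(x_j,y_j) y) > \alpha(x_j,y_j)/2 \geq \delta''$, which contradicts the assumption that $\rho'(sx,sy) < \delta''$ for all $s \in F''$. There is no real obstacle here: the only subtlety is choosing the slack factor $1/2$ in the definition of $V_{x,y}$ (and of $\delta''$) so that both the openness of $V_{x,y}$ and the strict lower bound on $\delta''$ survive simultaneously after the compactness extraction.
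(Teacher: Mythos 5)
Your proof is correct and follows essentially the same compactness argument as the paper: the paper works with the complement $(X\times X)\setminus V$ of the set where $\rho(x,y)<\delta$, which is exactly your set $K$, uses the same half-slack neighborhoods, and extracts the same finite subcover to define $F''$ and $\delta''$. Your explicit appeal to the continuity of the action (so that $(x',y')\mapsto \rho'(mx',my')$ is continuous) is a welcome clarification of a point the paper attributes only to the continuity of $\rho'$.
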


\begin{proof}
Consider the open subset $V \subset X \times X$ defined by
\[
V \coloneqq \{(x,y) \in X \times X : \rho(x,y) < \delta\}.
\]
Since $\rho'$ is dynamically generating, given $(x,y) \in (X \times X) \setminus V$, we can find
$s_{x,y} \in M$ such that $d_{x,y}\coloneqq \rho'(s_{x,y} x, s_{x,y} y) > 0$.
By continuity of $\rho'$, for every $(x,y) \in (X \times X) \setminus V$
the set
\[
W_{x,y}\coloneqq \{(z,t) \in (X \times X) \setminus V: \rho'(s_{x,y} z, s_{x,y} t) > d_{x,y}/2\}
\]
is an open neighborhood of $(x,y)$ in $(X \times X) \setminus V$.
We thus obtain an open covering
\[
(X \times X)\setminus V = \bigcup_{(x,y) \in (X \times X)\setminus V} W_{x,y}.
\]
Since $(X \times X) \setminus V$ is compact, there exists a finite subset $E \subset (X \times X) \setminus V$ such that
\[
(X \times X) \setminus V = \bigcup_{(x,y) \in E} W_{x,y}.
\]
Set $F'' \coloneqq \{s_{x,y}: (x,y) \in E\}$ and $\delta'' \coloneqq \min_{(x,y) \in E} d_{x,y}/2$.
Note that $F''$ is a finite subset of $M$ and that $\delta'' > 0$.
We claim that $F''$ and $\delta''$ have the required properties.
Indeed, consider the subset $V'' \subset X \times X$ defined by
\[
V'' \coloneqq \{(x,y) \in X \times X : \rho'(sx,sy) < \delta'' \text{ for all } s \in F''\}.
\]
We have to show that $V'' \subset V$.
Let $(x,y) \in V''$. Then $\rho'(s_{z,t} x, s_{z,t} y) < \delta'' \leq d_{z,t}/2$ for all $(z,t) \in E$,
so that $(x,y) \notin  \bigcup_{(z,t) \in E} W_{z,t}= X\setminus V$ and hence $(x,y) \in V$. This proves our claim and finishes the proof.
\end{proof}

The following result is an extension of Lemma~10.24 in~\cite{kerr-li-book}.

\begin{lemma}
\label{l:maps}
Let $X$ be a compact topological space equipped with a continuous action of a strongly sofic monoid $M$.
Let $\Sigma = (D_i,\sigma_i)_{i \in I}$ be a strong sofic approximation of $M$.
Let $\rho$ and $\rho'$ be two continuous pseudometrics on $X$ and suppose that $\rho'$ is
dynamically generating for $(X,M)$. Let $F \subset M$ be a finite subset and let $\delta > 0$.
Then there are a finite subset $F' \subset M$, $\delta' > 0$, and $i_0 \in I$ such that
\begin{equation}
\label{e:inclusion-map}
\Map(X,M,\rho',F',\delta', \sigma) \subset \Map(X,M,\rho,F,\delta,\sigma)
\end{equation}
for all $\sigma = \sigma_i$ with $i \geq i_0$.
\end{lemma}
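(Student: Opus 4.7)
The guiding idea is to convert a $\rho$-control at points into a $\rho'$-control at finitely many translates (via Lemma~\ref{l:maps-1}), and then to push the available $\rho'_2$-bounds through $\sigma_i(m)$ by using the uniform preimage bound coming from property (SA4) of a strong sofic approximation. This last step is precisely where soficity is not enough and strong soficity must be invoked.

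First, since $\rho$ is continuous and $X$ is compact, set $L \coloneqq \sup_{x,y \in X}\rho(x,y) < \infty$. Apply Lemma~\ref{l:maps-1} with target $\delta/\sqrt 2$ to obtain a finite set $F'' \subset M$ and $\delta'' > 0$ such that $\rho'(sx,sy) < \delta''$ for all $s \in F''$ forces $\rho(x,y) < \delta/\sqrt 2$. Define $F' \coloneqq F \cup F'' \cup F''\cdot F$ and let $\Delta \coloneqq \Delta_{F'}$ be the integer provided by (SA4) for the strong sofic approximation $\Sigma$. I will fix $\delta' > 0$ and $i_0 \in I$ at the end.

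Now fix $\sigma = \sigma_i$ with $i \geq i_0$, $\varphi \in \Map(X,M,\rho',F',\delta',\sigma)$, and $m \in F$. For each $s \in F''$, apply the triangle inequality to decompose
\[
\rho'(s\varphi(\sigma(m)v),\,sm\varphi(v)) \leq T_1(s,m,v)+T_2(s,m,v)+T_3(s,m,v),
\]
where $T_1 = \rho'(s\varphi(\sigma(m)v),\varphi(\sigma(s)\sigma(m)v))$, $T_2 = \rho'(\varphi(\sigma(s)\sigma(m)v),\varphi(\sigma(sm)v))$, and $T_3 = \rho'(\varphi(\sigma(sm)v),sm\varphi(v))$. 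Averaging the squares and using $(a+b+c)^2 \leq 3(a^2+b^2+c^2)$: the $T_3$-term is bounded in $\rho'_2$ by $\delta'$ because $sm \in F'$; the $T_2$-term is nonzero only on the set where $\sigma(s)\sigma(m)v \neq \sigma(sm)v$, which by (SA2) has Hamming measure $o(1)$ in $i$, contributing at most $L^2 d_{D_i}^\Ham(\sigma(s)\sigma(m),\sigma(sm))$; finally, substituting $w = \sigma(m)v$ in the $T_1$-term gives
\[
\frac{1}{|D_i|}\sum_{v \in D_i} T_1^2 = \frac{1}{|D_i|}\sum_{w \in D_i} |\sigma(m)^{-1}(w)|\,\rho'(s\varphi(w),\varphi(\sigma(s)w))^2 \leq \Delta\,(\delta')^2,
\]
where the crucial use of (SA4) turns a pushforward under the (possibly non-injective) $\sigma(m)$ into a controlled multiple of the ambient $\rho'_2$-bound for $s \in F'' \subset F'$.

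Combining these estimates, the mean square of $\rho'(s\varphi(\sigma(m)v),sm\varphi(v))$ over $v$ is bounded by a quantity $E(s,m,i,\delta')$ that tends to $3(\Delta+1)(\delta')^2$ as $i \to \infty$. By Markov's inequality, the bad set $B_{s,m} \coloneqq \{v : \rho'(s\varphi(\sigma(m)v),sm\varphi(v)) \geq \delta''\}$ has $|B_{s,m}|/|D_i| \leq E(s,m,i,\delta')/(\delta'')^2$. By the contrapositive of Lemma~\ref{l:maps-1}, the set $B_m \coloneqq \{v : \rho(\varphi(\sigma(m)v), m\varphi(v)) \geq \delta/\sqrt 2\}$ is contained in $\bigcup_{s \in F''} B_{s,m}$, giving $|B_m|/|D_i| \leq |F''|\cdot \max_s E(s,m,i,\delta')/(\delta'')^2$. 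Splitting the definition of $\rho_2^{D_i}(\varphi \circ \sigma(m), m\varphi)^2$ over $B_m$ and its complement yields the bound $L^2 |B_m|/|D_i| + \delta^2/2$. It now suffices to choose $\delta'$ small enough and $i_0$ large enough (uniformly over the finite sets $F$ and $F''$, using (SA2) for the $T_2$-contribution) so that the first summand is at most $\delta^2/2$, giving $\rho_2^{D_i}(\varphi\circ\sigma(m),m\varphi)\leq \delta$ for every $m \in F$, i.e., the desired inclusion. The main obstacle is the first term $T_1$: without the uniform preimage bound $\Delta$ from strong soficity, the change of variables $w = \sigma(m)v$ inflates the available $\rho'_2$-control by an uncontrolled factor, and the argument collapses; this is the precise place where the class of sofic monoids is insufficient.
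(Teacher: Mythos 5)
Your argument is correct and follows essentially the same route as the paper's proof: the same auxiliary Lemma~\ref{l:maps-1}, the same enlarged set $F' = F \cup F'' \cup F''F$, the same crucial use of the uniform preimage bound $\Delta_{F'}$ from (SA4) to push the $\rho'_2$-control through the possibly non-injective $\sigma(m)$, and (SA2) to compare $\sigma(s)\sigma(m)$ with $\sigma(sm)$ --- the paper merely organizes the estimate via the good sets $A\cap B\cap C$ and counts their complements separately, whereas you combine the three terms by $(a+b+c)^2\leq 3(a^2+b^2+c^2)$ and apply a single Markov-type bound. One cosmetic fix: in bounding the $T_2$-contribution the constant should be the $\rho'$-diameter of $X$ (finite by compactness and continuity of $\rho'$), not the $\rho$-diameter $L$.
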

\begin{proof}
Since $X \times X$ is compact and $\rho$ is continuous, we have $\diam(X,\rho) \coloneqq \sup_{x,y \in X}\rho(x,y) \in [0,\infty)$.
Choose some real number $\delta_0 > \diam(X,\rho)$.
By Lemma~\ref{l:maps-1}, there exist a finite subset $F'' \subset M$ and a constant $\delta'' > 0$ such that if $x,y \in X$ 
satisfy $\rho'(sx,sy) < \delta''$ for all $s \in F''$, then $\rho(x,y) < \delta/2$. 
Consider the finite subset 
\[
F' \coloneqq F'' \cup F \cup (F''F) \subset M.
\]
As $\Sigma$ is a strong sofic approximation, there exists an integer $\Delta_{F'} \geq 1$ which satisfies the Conditions 
(SA1)-(SA4) for $K=F'$. 
Set 
\[
\delta' \coloneqq \min\{(\delta''/2)^2, ((\Delta_{F'} +2) |F''||F|  (1+\delta_0^2))^{-1}\} > 0. 
\]
Let $i_0 \in I$ such that setting $D \coloneqq D_i$ and $\sigma \coloneqq \sigma_i$ for all $i \geq i_0$, Conditions
(SM1)-(SM4) are satisfied for $\varepsilon \coloneqq \delta'$ and $K \coloneqq F'$.  
Let $\varphi \in \Map(X,M,\rho',F',\delta',\sigma)$ and define 
\begin{align*}
A& \coloneqq \{v \in D: \rho'(\varphi (\sigma(st)(v)), st \varphi(v)) < \delta''/2 \text{ for all } s\in F'' \text{ and } t \in F\}\\
B& \coloneqq\{v \in D: \rho'(\varphi (\sigma(s)\sigma(t)(v)), s \varphi(\sigma(t)(v))) < \delta''/2 \text{ for all } s\in F'' \text{ and } t \in F\}.
\end{align*}
By summing up the inequalities $\left((\rho')_2^D(\varphi \circ \sigma(st), st \varphi)\right)^2 < (\delta')^2$ over all $s \in F''$ and $t \in F$, from the definition of $A$ and the relation $(\delta'')^2 \geq 4\delta'$ we deduce that 
\begin{align*}
|F''|\cdot|F| (\delta')^2 & > \sum_{s\in F''}\sum_{t \in F}  ((\rho')_2^D(\varphi \circ \sigma(st)), st  \varphi))^2\\
& = 
 \frac{ 1}{|D|} \sum_{v \in D}   \sum_{s\in F''}\sum_{t \in F} (\rho'(\varphi ( \sigma(st)(v)), st \varphi(v)))^2 \\
& \geq   \frac{ 1}{|D|} \sum_{v \in D\setminus A} \sum_{s\in F''}\sum_{t \in F} (\rho'(\varphi (\sigma(st)(v)), st \varphi(v)))^2 \\
& \geq  \frac{|D\setminus A| }{|D|}   (\delta''/2)^2 
 \geq  \frac{|D\setminus A| }{|D|} \delta'.
\end{align*}
Therefore, 
$|D\setminus A| < |F''|\cdot|F|\cdot|D|\delta'$, and we obtain 
\begin{align}
\label{e:maps-1}
|A| = |D|  -  |D\setminus A| > (1-|F''|\cdot|F|\delta')|D|. 
\end{align}
Now, since $F\subset F' = K$, Condition (SM4) implies that 
$|\sigma(t)^{-1}(u)| \leq \Delta_{F'}$ for all $t \in F$ and $u \in D$. Consequently, for all $s \in F''$ and $t \in F$ we have 
\begin{align*}
\Delta_{F'} (\delta')^2 & > \Delta_{F'}((\rho')_2^D(\varphi \circ \sigma(s)), s \varphi))^2 \\ 
& =  \frac{1}{|D|}   \sum_{u \in D}   \Delta_{F'} (\rho'(\varphi ( \sigma(s)(u)), s \varphi(u))))^2 \\
& 
\geq \frac{1}{|D|}   \sum_{u \in D} \sum_{ v \in \sigma(t)^{-1}(u)}   (\rho'(\varphi ( \sigma(s)\sigma(t)(v)), s \varphi(\sigma(t)(v))))^2 \\
& = \frac{1}{|D|}   \sum_{v \in D}    (\rho'(\varphi ( \sigma(s)\sigma(t)(v)), s \varphi(\sigma(t)(v))))^2.
\end{align*}
By summing up the above inequalities over all $s \in F''$ and $t \in F$, we deduce that 
\begin{align*}
\Delta_{F'} \cdot |F''|\cdot |F| (\delta')^2 & > \frac{1}{|D|} \sum_{v \in D} \sum_{s\in F''}\sum_{t \in F}  
(\rho'(\varphi (\sigma(s)\sigma(t)(v)), s \varphi(\sigma(t)(v))))^2  \\
& \geq  \frac{1}{|D|} \sum_{v \in D\setminus B} \sum_{s\in F''}\sum_{t \in F} (\rho'(\varphi (\sigma(st)(v)), st \varphi(v)))^2 \\
& \geq  \frac{|D\setminus B|}{|D|} (\delta''/2)^2 
\geq  \frac{ |D\setminus B|}{|D|} \delta',
\end{align*} 
whence $|D\setminus B| < \Delta_{F'} \cdot |F''|\cdot |F|\cdot |D|\delta'$ and 
\begin{align}
\label{e:maps-2}
|B| = |D| -  |D\setminus B| > (1-\Delta_{F'} |F''|\cdot |F|\delta')|D|. 
\end{align}
Combining \eqref{e:maps-1} and \eqref{e:maps-2}, we obtain 
\begin{align}
\label{e:maps-3}
|A \cap B| = |A + |B| - |A \cup B| \geq |A| + |B| - |D| > (1- (1 + \Delta_{F'}) |F''|\cdot |F|\delta')|D|. 
\end{align}
Using Condition (SM2), a similar argument shows that for 
\[
C \coloneqq \{v \in D\: \sigma(s)\sigma(t)(v) = \sigma(st)(v) \text{ for all } s\in F'' \text{ and } t \in F\}, 
\]
one has   
\begin{align*}
|F''|\cdot |F| \delta' |D|
& = \sum_{s\in F''}\sum_{t \in F} |D|\delta' \\
& >  \sum_{s\in F''}\sum_{t \in F} |D| d^\Ham_D(\sigma(s) \sigma(t), \sigma(st)) \\
& =  \sum_{s\in F''}\sum_{t \in F} |\{v  \in D : \sigma(s)\sigma(t)(v) \neq  \sigma(st)(v) \}| \\
& \geq |D\setminus C|.
\end{align*}
Combining with \eqref{e:maps-3}, we obtain 
\begin{align}
\label{e:maps-4}
|A\cap B \cap C| \geq |A\cap B| - |D\setminus C| \geq (1- (\Delta_{F'}+2) |F''|\cdot |F|\delta')|D|. 
\end{align} 
Observe now that if $v\in A\cap B\cap C$ 
and $t \in F$, then for all $s\in F''$, one has 
\begin{align*}
\rho'(s \varphi(\sigma(t)(v)), st\varphi(v))
& \leq  \rho'(s \varphi(\sigma(t)(v)), \varphi(\sigma(st)(v))) + \rho'(\varphi(\sigma(st)(v)), s t\varphi(v))\\
& =_*     \rho'(s \varphi(\sigma(t)(v)), \varphi(\sigma(s)\sigma(t)(v)))+ \rho'(\varphi(\sigma(st)(v)), s t\varphi(v))\\
& <_* \delta''/2 + \delta''/2 \\
& =\delta'',
\end{align*}
where $=_*$ follows from the fact that $v \in C$ and $<_*$ follows from the fact that $v \in B$ and $v \in A$.
Thus, by the choices of $F''$ and $\delta''$,
\[
\rho(\varphi(\sigma(t)(v)), t\varphi(v)) < \delta/2.
\] 
Finally, by using \eqref{e:maps-4} and by the choice of $\delta'$, and recalling that 
$\delta_0 = \mathrm{diam}(X,\rho)$, we deduce that for all $t \in F$
\begin{align*}
(\rho^D_2 ( \varphi(\sigma(t)), t \varphi))^2 & =  \frac{1}{|D|}\sum_{v \in D} (\rho(\varphi(\sigma(t)(v)), t\varphi(v))^2\\
& = \frac{1}{|D|}\sum_{v\in D\setminus(A\cap B\cap C)} (\rho(\varphi(\sigma(t)(v)), t\varphi(v))^2\\ 
& \ \ \ \ \qquad \qquad + \frac{1}{|D|}\sum_{v\in A\cap B\cap C} (\rho(\varphi(\sigma(t)(v)), t\varphi(v))^2 \\
& \leq \frac{|D\setminus(A\cap B\cap C)|}{|D|} \delta_0^2 + \frac{|A\cap B\cap C|}{|D|} (\delta/2)^2\\
&\leq  (\Delta_{F'} +2) |F''|\cdot |F|\delta' \delta_0^2 + \delta^2/4\\
  & \leq 3 \delta^2/4+ \delta^2/4 = \delta^2. 
\end{align*}
This shows that $\varphi \in \Map(X,M,\rho,F,\delta,\sigma)$, and the proof is complete. 
\end{proof}

The following result extends Proposition~10.25 in~\cite{kerr-li-book}.

\begin{proposition}
\label{p:h-rho-h-rho'}
Let $X$ be a compact topological space equipped with a continuous action of an infinite strongly sofic monoid $M$.
Let $\Sigma = (D_i,\sigma_i)_{i \in I}$ be a strong sofic approximation of $M$.
Suppose that $(X,M)$ admits dynamically generating continuous pseudometrics $\rho$ and $\rho'$. 
Then $h_\Sigma(X,M,\rho) = h_\Sigma(X,M,\rho')$. 
\end{proposition}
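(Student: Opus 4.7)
By symmetry (interchanging $\rho$ and $\rho'$) it suffices to establish $h_\Sigma(X,M,\rho) \leq h_\Sigma(X,M,\rho')$. Since $M$ is infinite, Proposition~\ref{p:sa-infinite-monoid} gives $\lim_{i\in I}|D_i|=\infty$, so Proposition~\ref{p:ent-2-infty} lets me compute both entropies using $\rho_2^{D_i}$- and $(\rho')_2^{D_i}$-separation in place of the sup-separation. It then suffices, for each $\varepsilon > 0$, to produce $\varepsilon' > 0$ with $h_\Sigma(X,M,\varepsilon,\rho) \leq h_\Sigma(X,M,\varepsilon',\rho')$ in this reformulation.

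Fix $\varepsilon > 0$. Applying Lemma~\ref{l:maps-1} to the pair $(\rho,\rho')$ with target scale $\varepsilon/2$ yields a finite $F''' \subset M$ and $\delta''' > 0$ such that $\max_{s\in F'''}\rho'(sx,sy) < \delta'''$ implies $\rho(x,y) < \varepsilon/2$. By compactness set $D_0 \coloneqq \diam(X,\rho) < \infty$, and let $\Delta \coloneqq \Delta_{F'''} \geq 1$ be the constant provided by Condition (SA4) of Proposition~\ref{p:equiv-def-strongly-sofic} for $K = F'''$. Now fix an arbitrary finite $F^* \supset F''' \cup \{1_M\}$ and $\delta^* > 0$ to be constrained below. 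Applying Lemma~\ref{l:maps} \emph{with the roles of $\rho$ and $\rho'$ interchanged} (using that $\rho$ is itself dynamically generating), there exist a finite $F^{**}\subset M$, $\delta^{**} > 0$, and $i_0 \in I$ with $\Map(X,M,\rho,F^{**},\delta^{**},\sigma_i) \subset \Map(X,M,\rho',F^*,\delta^*,\sigma_i)$ for every $i \geq i_0$.

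Given distinct $\varphi,\psi$ in the left-hand set with $\rho_2^{D_i}(\varphi,\psi)\geq\varepsilon$, the bound on $\diam(X,\rho)$ together with a second-moment splitting shows that $D^\star \coloneqq \{v \in D_i : \rho(\varphi(v),\psi(v)) \geq \varepsilon/2\}$ satisfies $|D^\star| \geq c_1 |D_i|$, with $c_1 \coloneqq 3\varepsilon^2/(4D_0^2)$. For each $v \in D^\star$, Lemma~\ref{l:maps-1} furnishes some $s_v \in F'''$ with $\rho'(s_v\varphi(v),s_v\psi(v)) \geq \delta'''$; by pigeonhole a single $s_0 \in F'''$ works on a subset $D^{\star\star} \subset D^\star$ of size at least $|D^\star|/|F'''|$. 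Membership in $\Map_{\rho'}(F^*,\delta^*,\sigma_i)$ bounds both $(\rho')_2^{D_i}(\varphi\circ\sigma_i(s_0),s_0\varphi)$ and $(\rho')_2^{D_i}(\psi\circ\sigma_i(s_0),s_0\psi)$ by $\delta^*$. Choosing $\delta^*$ sufficiently small (depending only on $\varepsilon$ and the already fixed data), Markov's inequality shows that outside a subset of $D_i$ of size $\leq |D^{\star\star}|/2$, both errors are $\leq \delta'''/4$. The triangle inequality then produces $D^{\star\star}_{\mathrm{good}} \subset D^{\star\star}$ with $|D^{\star\star}_{\mathrm{good}}| \geq |D^{\star\star}|/2$ on which $\rho'(\varphi(\sigma_i(s_0)(v)),\psi(\sigma_i(s_0)(v))) \geq \delta'''/2$. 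Strong soficity enters here: Condition (SA4) gives $|\sigma_i(s_0)^{-1}(w)| \leq \Delta$ \emph{uniformly in $i$, in $s_0 \in F'''$, and independently of $F^*$}, so $W \coloneqq \sigma_i(s_0)(D^{\star\star}_{\mathrm{good}})$ satisfies $|W| \geq |D^{\star\star}_{\mathrm{good}}|/\Delta \geq (c_1/(2|F'''|\Delta))|D_i|$, and $\rho'(\varphi(w),\psi(w)) \geq \delta'''/2$ on $W$. Hence
\[
(\rho')_2^{D_i}(\varphi,\psi) \ \geq\ \frac{\delta'''}{2}\sqrt{\frac{|W|}{|D_i|}} \ \geq\ \varepsilon' \ \coloneqq\ \frac{\delta'''}{2}\sqrt{\frac{c_1}{2|F'''|\Delta}} \ >\ 0.
\]
Crucially, $\varepsilon'$ depends only on $\varepsilon$ and the fixed data $(\rho,\rho',F''',\delta''',\Delta,D_0)$, and \emph{not} on $F^*$ or $\delta^*$. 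Therefore $N_\varepsilon(\Map_\rho(F^{**},\delta^{**},\sigma_i),\rho_2^{D_i}) \leq N_{\varepsilon'}(\Map_{\rho'}(F^*,\delta^*,\sigma_i),(\rho')_2^{D_i})$ for all $i \geq i_0$. Taking $\frac{1}{|D_i|}\log$ and $\limsup_{i\in I}$, then using that $(F^{**},\delta^{**})$ provides a particular upper bound for $\inf_{F,\delta}$ on the left and taking $\inf_{F^*,\delta^*}$ on the right (which, by monotonicity of $\Map_{\rho'}$ in $F^*$ and $\delta^*$, is unchanged by the cofinal restriction $F^* \supset F'''\cup\{1_M\}$ and $\delta^*$ small), yields $h_\Sigma(X,M,\varepsilon,\rho) \leq h_\Sigma(X,M,\varepsilon',\rho') \leq h_\Sigma(X,M,\rho')$. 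Taking $\sup_\varepsilon$ completes the proof.

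\textbf{Main obstacle.} The delicate point is producing $\varepsilon'$ that is \emph{uniform} in $F^*$ and $\delta^*$: without uniformity one would only obtain, for each $F^*$, an inequality of the shape $h_\Sigma(X,M,\varepsilon,\rho) \leq h_\Sigma(X,M,\varepsilon'(F^*),\rho',F^*)$, which cannot be collapsed into a bound by a single $h_\Sigma(X,M,\varepsilon',\rho')$. This uniformity is precisely what strong soficity supplies: the preimage bound $\Delta_{F'''}$ from Condition (SA4) depends only on the set $F'''$ extracted from the data $(\varepsilon,\rho,\rho')$, and does not degrade as $F^*$ is enlarged. This is also where one sees why the proof does not extend to mere sofic approximations.
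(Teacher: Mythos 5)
Your proposal is correct, and it follows the same overall skeleton as the paper (Lemma~\ref{l:maps-1}, Lemma~\ref{l:maps}, Proposition~\ref{p:ent-2-infty} via Proposition~\ref{p:sa-infinite-monoid}, then the cofinality/monotonicity bookkeeping), but the core estimate is organized in the mirror direction and this changes where strong soficity is used. The paper proves $h_\Sigma(X,M,\rho') \leq h_\Sigma(X,M,\rho)$: it applies Lemma~\ref{l:maps} as stated to get $\Map(X,M,\rho',F',\delta',\sigma_i) \subset \Map(X,M,\rho,F,\delta,\sigma_i)$ and then shows that $\rho_\infty^{D}$-\emph{closeness} of two such microstates forces $(\rho')_2^{D}$-closeness; the key trick is that precomposition with $\sigma(s)$ never increases $\rho_\infty^{D}$-distance, so no preimage bound is needed in the main proof and Condition (SA4) enters only inside Lemma~\ref{l:maps}. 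You instead prove $h_\Sigma(X,M,\rho) \leq h_\Sigma(X,M,\rho')$, applying Lemma~\ref{l:maps} with the roles of $\rho$ and $\rho'$ interchanged (legitimate, since $\rho$ is also dynamically generating) and transferring \emph{separation} rather than closeness: from a positive-density set of coordinates where $\rho(\varphi(v),\psi(v))$ is large you pass, via the contrapositive of Lemma~\ref{l:maps-1}, pigeonhole over $F'''$, and the $\delta^*$-approximation property, to a positive-density set where $\rho'(\varphi(\sigma_i(s_0)v),\psi(\sigma_i(s_0)v))$ is large, and you then must push this set forward through $\sigma_i(s_0)$ — this is exactly where you need (SA4) a second time, since $\sigma_i(s_0)$ could otherwise collapse the set; your constants check out ($|D^\star| \geq \tfrac{3\varepsilon^2}{4D_0^2}|D_i|$, the Markov step with $\delta^* \lesssim \delta'''\sqrt{c_1/|F'''|}$, $|W| \geq |D^{\star\star}_{\mathrm{good}}|/\Delta$), and crucially $\varepsilon'$ depends only on $(\varepsilon,\rho,\rho')$-data, so the infimum over $(F^*,\delta^*)$ can be taken as you say. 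Two minor remarks: the degenerate case $\diam(X,\rho)=0$ (division by $D_0$) occurs only when $|X|\leq 1$ and should be dismissed in a line; and your closing claim that strong soficity is ``precisely'' what supplies the uniformity is slightly overstated as a diagnosis of the general proof — the paper's direction obtains its analogous uniform scale $\eta$ without any direct appeal to (SA4), confining strong soficity to Lemma~\ref{l:maps} — though it is an accurate description of where (SA4) is indispensable in \emph{your} version of the estimate.
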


\begin{proof}  
Let $\varepsilon>0$ and denote by $\delta'_0 = \mathrm{diam}(X,\rho')$ the $\rho'$-diameter of $X$. 
By Lemma~\ref{l:maps-1}, we can find a finite subset $E \subset M$ and $\eta > 0$ such that 
if $x,y \in X$ satisfy $\rho(sx,sy) < \sqrt{3\eta}$ for all $s \in E$, then $\rho'(x,y) < \varepsilon/\sqrt{2}$. 
Up to taking a smaller $\eta$, if necessary, we may also suppose that
\begin{align}
\label{e:invariant-1}
3\eta |E| (\delta'_0)^2 < \varepsilon^2/2.
\end{align}

Fix a finite subset $F\subset M$ and $\delta>0$ such that $E\subset F$ and $\delta < \eta$. 
By Lemma~\ref{l:maps}, there exists a finite subset $F' \subset M$, $\delta'>0$, and $i_0 \in I$ such that 
$\Map(X,M,\rho', F', \delta', \sigma_i) \subset \Map(X,M,\rho, F, \delta, \sigma_i)$ for all $i \in I$ with $i \geq i_0$. 
\par 
Let $i \in I$ with $i \geq i_0$. Set $D \coloneqq D_{i}$ and $\sigma \coloneqq\sigma_{i}$.
Let $\varphi_1, \varphi_2 \in \Map(X,M,\rho', F', \delta', \sigma)$ and suppose that $\rho^{D}_\infty(\varphi_1, \varphi_2) < \eta$. 
We claim that $(\rho')_2^D(\varphi_1, \varphi_2) < \varepsilon$. 
Indeed, for all $s \in E$ we have
\begin{align}
\label{e:invariant-3}
    \rho^D_2(s\varphi_1, s\varphi_2) & \leq \rho^D_2(s\varphi_1, \varphi_1\circ \sigma(s)) + \rho^D_2(\varphi_1\circ \sigma(s), \varphi_2\circ \sigma(s)) + \rho^D_2(\varphi_2\circ \sigma(s), s\varphi_2) \\
    & \leq \delta + \rho^D_\infty(\varphi_1\circ \sigma(s), \varphi_2\circ \sigma(s)) + \delta \nonumber\\
& \leq \delta + \eta+ \delta < 3\eta.  \nonumber 
\end{align}
From the choice of $E$ we have that
\begin{align*}
Q  \coloneqq \{v \in D \colon \rho(s\varphi_1(v), s\varphi_2(v)) < \sqrt{3\eta} \text{ for all } s \in E\}  \subset  \{v \in D \colon \rho'(\varphi_1(v), \varphi_2(v)) < \varepsilon/\sqrt{2} \}.
\end{align*} 
Moreover, by \eqref{e:invariant-3}, 
\begin{align*}
 |D| \cdot |E| (3\eta)^2  \geq \sum_{v \in D} \sum_{s \in E} ( \rho(s\varphi_1(v), s\varphi_2(v)) )^2   \geq \sum_{v \in D\setminus Q} \sum_{s \in E} (\rho(s\varphi_1(v), s\varphi_2(v)))^2  \geq |D \setminus Q|3\eta 
\end{align*}
so that $|D\setminus Q| \leq  |D| \cdot |E| 3 \eta$.  Consequently, combining with \eqref{e:invariant-1}, we obtain 
\begin{align*}
((\rho')_2^D(\varphi_1, \varphi_2))^2  & = \frac{1}{|D|}\sum_{v \in D} (\rho'(\varphi_1(v), \varphi_2(v)))^2\\
& = \frac{1}{|D|}\sum_{v \in D \setminus Q} (\rho'(\varphi_1(v), \varphi_2(v)))^2 + \frac{1}{|D|}\sum_{v \in Q} (\rho'(\varphi_1(v), \varphi_2(v)))^2 \\
& \leq \frac{|D \setminus Q|}{|D|}  (\delta'_0)^2 +  \frac{|Q|}{|D|} \varepsilon^2/2 \\
& \leq |E| 3 \eta (\delta'_0)^2 + \varepsilon^2/2\\
& \leq \varepsilon^2/2+ \varepsilon^2/2=\varepsilon^2,
\end{align*}
so that $(\rho')_2^D(\varphi_1, \varphi_2) < \varepsilon$, and the claim follows.
From the claim, we immediately deduce 
\begin{align}
\label{e:invariant-2}
N_\varepsilon(\Map(X,M,\rho', F', \delta', \sigma), (\rho')^D_2) \leq N_{\eta} (\Map(X,M,\rho, F, \delta, \sigma), \rho^D_\infty). 
\end{align} 
It follows that 
\begin{multline*}
\limsup_{i \in I}   \frac{1}{|D_i|} \log N_\varepsilon(\Map(X,M,\rho', F', \delta', \sigma_i), (\rho')^{D_i}_2) \\
\leq   \limsup_{i \in I} \frac{1}{|D_i|} \log  N_{\eta} (\Map(X,M,\rho, F, \delta, \sigma_i), \rho^{D_i}_\infty). 
\end{multline*}
Since $F \subset M$ was arbitrary (with $E \subset F$) and $\delta>0$ was arbitrary (with $\delta<\eta$), 
from the above inequality we deduce  that 
\begin{multline*}
\inf_{F'} \inf_{\delta'}  \limsup_{i \in I}  \frac{1}{|D_i|} \log N_\varepsilon(\Map(X,M,\rho', F', \delta', \sigma_i), (\rho')^{D_i}_2)\\ \leq \inf_{F} \inf_{\delta}  \limsup_{i \in I}  \frac{1}{|D_i|} \log N_\eta (\Map(X,M,\rho, F, \delta, \sigma_i), \rho^{D_i}_2).   
\end{multline*}
Finally, by passing to the limit for $\varepsilon\to 0$, so that also $\eta\to 0$ (cf.\ \eqref{e:invariant-1}), 
we conclude from Proposition~\ref{p:sa-infinite-monoid} and Proposition~\ref{p:ent-2-infty} that 
$h_\Sigma(X,M, \rho') \leq  h_\Sigma(X,M, \rho)$. 
As $\rho$ and $\rho'$ play symmetric roles, we deduce that $h_\Sigma(X,M, \rho') = h_\Sigma(X,M, \rho)$. 
\end{proof}

Let $X$ be a compact space equipped with a continuous action of an infinite strongly sofic monoid $M$ and admitting a dynamically generating continuous pseudometric $\rho$.
Given a strong sofic approximation $\Sigma = (D_i,\sigma_i)_{i \in I}$ of $M$,
we define the \emph{sofic topological entropy} of $(X,M)$ with respect to $\Sigma$ by setting $h_\Sigma(X,M) \coloneqq h_\Sigma(X,M,\rho)$.
The fact that $h_\Sigma(X,M)$ does not depend on the choice of the dynamically generating continuous pseudometric $\rho$ follows from Proposition~\ref{p:h-rho-h-rho'}.

\begin{theorem}
\label{t:topo-invariant}
Let $X$ and $Y$ be compact topological spaces equipped with continuous actions of an infinite strongly sofic monoid $M$.
Let $\Sigma = (D_i,\sigma_i)_{i \in I}$ be a strong sofic approximation of $M$.
Suppose that the dynamical systems $(X,M)$ and $(Y,M)$ are topologically conjugate
and that $(X,M)$ admits a dynamically generating continuous pseudometric.
Then $(Y,M)$ admits a dynamically generating continuous pseudometric and one has $h_\Sigma(X,M) = h_\Sigma(Y,M)$ for all strong sofic approximations $\Sigma$ of $M$.
\end{theorem}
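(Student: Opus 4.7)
The plan is to transport the dynamically generating pseudometric on $X$ to $Y$ using the topological conjugacy, and then to observe that all the structure entering the definition of sofic topological entropy is preserved under this transport.

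More precisely, let $f \colon X \to Y$ be a topological conjugacy and let $\rho$ be a dynamically generating continuous pseudometric on $(X,M)$. First I would define $\rho_Y \colon Y \times Y \to \R$ by
\[
\rho_Y(y_1,y_2) \coloneqq \rho(f^{-1}(y_1), f^{-1}(y_2)).
\]
Since $f^{-1} \colon Y \to X$ is continuous, $\rho_Y$ is a continuous pseudometric on $Y$. To see that $\rho_Y$ is dynamically generating for $(Y,M)$, take two distinct points $y_1,y_2 \in Y$ and set $x_i \coloneqq f^{-1}(y_i)$. By the dynamical generating property of $\rho$, there exists $m \in M$ with $\rho(mx_1, mx_2) > 0$, and then the $M$-equivariance of $f$ gives $my_i = f(mx_i)$, so $\rho_Y(my_1,my_2) = \rho(mx_1,mx_2) > 0$.

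Next I would exploit the bijection $F \colon X^D \to Y^D$ induced by postcomposition with $f$, i.e.\ $F(\varphi) \coloneqq f \circ \varphi$, for any non-empty finite set $D$. By the very definition of $\rho_Y$, one has $(\rho_Y)_2^D(F(\varphi),F(\psi)) = \rho_2^D(\varphi,\psi)$ and $(\rho_Y)_\infty^D(F(\varphi),F(\psi)) = \rho_\infty^D(\varphi,\psi)$ for all $\varphi,\psi \in X^D$. Moreover, using the $M$-equivariance of $f$, one checks that $mF(\varphi) = F(m\varphi)$ and $F(\varphi) \circ \sigma(m) = F(\varphi \circ \sigma(m))$ for every $m \in M$ and every map $\sigma \colon M \to \Map(D)$. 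Hence for any finite subset $F \subset M$, any $\delta > 0$, and any such $\sigma$,
\[
F(\Map(X,M,\rho,F,\delta,\sigma)) = \Map(Y,M,\rho_Y,F,\delta,\sigma),
\]
and $F$ restricts to a $\rho_\infty^D$-to-$(\rho_Y)_\infty^D$ isometry between these sets. Consequently, the counting functions $N_\varepsilon(\Map(\cdot),(\cdot)_\infty^D)$ agree on both sides.

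Plugging this identification of counting functions into the successive definitions of $h_\Sigma(\cdot,\varepsilon,\rho,F,\delta)$, $h_\Sigma(\cdot,\varepsilon,\rho,F)$, $h_\Sigma(\cdot,\varepsilon,\rho)$ and finally $h_\Sigma(\cdot,M,\rho)$, I obtain $h_\Sigma(X,M,\rho) = h_\Sigma(Y,M,\rho_Y)$. Since $M$ is infinite and strongly sofic and $\Sigma$ is a strong sofic approximation, Proposition~\ref{p:h-rho-h-rho'} ensures that the right-hand side equals $h_\Sigma(Y,M)$ (as $\rho_Y$ is a dynamically generating continuous pseudometric on $(Y,M)$), and the left-hand side is by definition $h_\Sigma(X,M)$. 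The argument is essentially bookkeeping: there is no real obstacle beyond verifying that every piece of structure (continuity, the diagonal action on $X^D$, the two pseudometrics $\rho_2^D$ and $\rho_\infty^D$, the defining condition for $\Map(\cdot,F,\delta,\sigma)$) is transported faithfully by $F$, which all follows formally from $f$ being an $M$-equivariant homeomorphism.
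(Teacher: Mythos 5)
Your proposal is correct and follows essentially the same route as the paper: transport $\rho$ to $Y$ via $\rho\circ(f^{-1}\times f^{-1})$, check it is dynamically generating, observe that postcomposition with $f$ gives an isometric bijection between the sets $\Map(X,M,\rho,F,\delta,\sigma)$ and $\Map(Y,M,\rho_Y,F,\delta,\sigma)$ so the separated-set counts coincide, and then invoke Proposition~\ref{p:h-rho-h-rho'} to identify the resulting quantities with $h_\Sigma(X,M)$ and $h_\Sigma(Y,M)$. The only (cosmetic) remark is that you reuse the letter $F$ both for the finite subset of $M$ and for the induced bijection $X^D\to Y^D$; mathematically everything checks out.
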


\begin{proof}
Since $(X,M)$ and $(Y,M)$ are topologically conjugate, there exists an $M$-equivariant homeomorphism $f \colon X \to Y$.
Let $\rho$ be a dynamically generating continuous pseudometric for $(X,M)$.
Then it is straightforward that the map $\rho' \coloneqq \rho \circ (f^{-1} \times f^{-1}) \colon Y \times Y \to \R$ is a dynamically generating continuous pseudometric for $(Y,M)$.
\par
Let now $F \subset M$ be a finite subset, let $\varepsilon,\delta > 0$, let $D$ be a non-empty finite set, 
and let $\sigma \colon X \to \Map(D)$.
Consider the map $\sigma' \coloneqq \sigma \circ f^{-1} \colon Y \to \Map(D)$.
For all $\varphi \in \Map(X,M,\rho, F, \delta, \sigma)$ set $\varphi' \coloneqq f \circ \varphi$. It is straightforward that
$\varphi' \in \Map(Y,M, \rho', F, \delta, \sigma')$ and that if also $\psi \in \Map(X,M,\rho, F, \delta, \sigma)$ then
$\varphi$ and $\psi$ are $(\rho_\infty^D,\varepsilon)$-separated if and only if $\varphi',\psi' \in \Map(Y,M,\rho', F, \delta, \sigma')$
are $((\rho')_\infty^D,\varepsilon)$-separated. Thus,
\[
N_\varepsilon(\Map(X,M,\rho,F,\delta,\sigma),\rho_\infty^D) = N_\varepsilon(\Map(Y,M,\rho',F,\delta,\sigma'),(\rho')_\infty^D).
\]
Consequently, if $\Sigma = (D_i,\sigma_i)_{i \in I}$ is the strong sofic approximation of $M$, we have
\begin{multline*}
h_\Sigma(X,M) = h_\Sigma(X,M,\rho) = \sup_{\varepsilon > 0}\inf_{F}\inf_{\delta > 0}\limsup_{i \in I} \frac{1}{|D_i|} \log N_\varepsilon(\Map(X,M,\rho,F,\delta,\sigma_i),\rho_\infty^{D_i})\\
= \sup_{\varepsilon > 0}\inf_{F}\inf_{\delta > 0}\limsup_{i \in I} \frac{1}{|D_i|} \log N_\varepsilon(\Map(Y,M,\rho',F,\delta,\sigma_i'),
(\rho')_\infty^{D_i}) = h_\Sigma(Y,M,\rho') = h_\Sigma(Y,M).
\end{multline*}
\end{proof}

Keeping in mind Example \ref{e:rho-X-rho-Y} and the notation therein we have the following monotonicity result.

\begin{proposition}
\label{p:st-ent-subsystem}
Let $X$ be a compact topological space equipped with a continuous action of a sofic monoid $M$.
Let $\Sigma = (D_i,\sigma_i)_{i \in I}$ be a sofic approximation of $M$.
Suppose that $(X,M)$ admits a continuous pseudometric $\rho$.
Let $Y \subset X$ be a closed $M$-invariant subset.
Then one has $h_\Sigma(Y,M,\rho) \leq h_\Sigma(X,M,\rho)$.
\end{proposition}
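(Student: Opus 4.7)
The plan is to exploit the compatibility between the constructions on $Y$ and on $X$ in a completely routine way; the only point to check is that every ingredient in the definition of sofic topological entropy for $(Y,M,\rho)$ is obtained by restriction from the corresponding ingredient for $(X,M,\rho)$, and hence can only give a smaller count of separated maps.

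First I would fix a non-empty finite set $D$ and a map $\sigma\colon M\to\Map(D)$. Since $Y$ is closed and $M$-invariant in $X$, the set $Y^D$ is a closed, $M$-invariant subset of $X^D$ (with the same diagonal action), and the pseudometrics $\rho_2^D$ and $\rho_\infty^D$ on $Y^D$ coincide with the restrictions of the corresponding pseudometrics on $X^D$ (as is evident from the defining formulas, combined with Example~\ref{e:rho-X-rho-Y}).

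Next I would establish the key inclusion
\[
\Map(Y,M,\rho,F,\delta,\sigma) \subset \Map(X,M,\rho,F,\delta,\sigma)
\]
for every finite $F\subset M$ and every $\delta>0$. Indeed, if $\varphi\in Y^D$ satisfies $\rho_2^D(\varphi\circ\sigma(m),m\varphi)\leq\delta$ for all $m\in F$, then $\varphi$, viewed as an element of $X^D$, satisfies the very same inequality since the pseudometric and the $M$-action are inherited from $X$. Any $(\rho_\infty^D,\varepsilon)$-separated subset of $\Map(Y,M,\rho,F,\delta,\sigma)$ is therefore also a $(\rho_\infty^D,\varepsilon)$-separated subset of $\Map(X,M,\rho,F,\delta,\sigma)$, and hence
\[
N_\varepsilon(\Map(Y,M,\rho,F,\delta,\sigma),\rho_\infty^D)\leq N_\varepsilon(\Map(X,M,\rho,F,\delta,\sigma),\rho_\infty^D).
\]

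Finally, applying this inequality to $D=D_i$ and $\sigma=\sigma_i$ for each $i\in I$, then taking $\frac{1}{|D_i|}\log$, then $\limsup_{i\in I}$, then $\inf_{\delta>0}$, then $\inf_F$, and finally $\sup_{\varepsilon>0}$, one obtains $h_\Sigma(Y,M,\rho)\leq h_\Sigma(X,M,\rho)$. There is no real obstacle here: the statement is purely a monotonicity observation, and the only thing that could go wrong would be a mismatch between the pseudometrics or actions on $Y$ and on $X$, which is ruled out by $M$-invariance of $Y$ and by the fact that $\rho$ on $Y$ is simply the restriction of $\rho$ on $X$.
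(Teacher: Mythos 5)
Your proposal is correct and follows essentially the same route as the paper: the inclusion $\Map(Y,M,\rho,F,\delta,\sigma)\subset\Map(X,M,\rho,F,\delta,\sigma)$ induced by $Y^D\subset X^D$, the resulting inequality between the separated-set counts, and then passing to $\limsup$, infima, and supremum in the definition of $h_\Sigma$. Nothing is missing.
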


\begin{proof}
Given a  finite subset $F \subset M$, $\delta > 0$, and a map $\sigma \colon M \to \Map(D)$,
where $D$ is a non-empty finite set,
we have a natural inclusion $\Map(Y,M,\rho,F,\delta,\sigma) \subset \Map(X,M,\rho,F,\delta,\sigma)$ induced by the inclusion
$Y^D \subset X^D$. 
Using~\eqref{e:N-epsilon-for-inv-sub}, it follows that
\[
N_\varepsilon(\Map(Y,M,\rho,F,\delta,\sigma),\rho_\infty^D) \subset  N_\varepsilon(\Map(X,M,\rho,F,\delta,\sigma),\rho_\infty^D)
\]
for every $\varepsilon > 0$.
We deduce that
\begin{align*}
h_\Sigma(Y,M, \rho)
&= \sup_{\varepsilon > 0} \inf_{F} \inf_{\delta > 0} \limsup_{i \in I} \frac{1}{|D_i|} \log N_\varepsilon(\Map(Y,M,\rho,F,\delta,\sigma_i),\rho_\infty^{D_i}) \\
&\leq \sup_{\varepsilon > 0} \inf_{F} \inf_{\delta > 0} \limsup_{i \in I} \frac{1}{|D_i|} \log N_\varepsilon(\Map(X,M,\rho,F,\delta,\sigma_i),\rho_\infty^{D_i}) \\
&= h_\Sigma(X,M, \rho).
\end{align*}
\end{proof} 

\begin{corollary}
\label{c:st-ent-subsystem}
Let $X$ be a compact topological space equipped with a continuous action of an infinite strongly sofic monoid $M$.
Let $\Sigma = (D_i,\sigma_i)_{i \in I}$ be a strong sofic approximation of $M$.
Suppose that $(X,M)$ admits a dynamically generating continuous pseudometric.
Let $Y \subset X$ be a closed $M$-invariant subset.
Then one has $h_\Sigma(Y,M) \leq h_\Sigma(X,M)$.
\end{corollary}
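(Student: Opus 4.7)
The plan is to deduce the corollary from Proposition~\ref{p:st-ent-subsystem} essentially as a bookkeeping exercise, once one checks that a suitable pseudometric can be used simultaneously on $X$ and $Y$.

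First, I would pick a dynamically generating continuous pseudometric $\rho$ for $(X,M)$, which exists by hypothesis. By Example~\ref{e:rho-X-rho-Y}, its restriction $\rho_Y \coloneqq \rho|_{Y \times Y}$ is again a continuous pseudometric, and it is dynamically generating for $(Y,M)$: indeed, since $Y$ is $M$-invariant, the action of $M$ restricts to $Y$, and for any two distinct points $x,y \in Y \subset X$ there is some $m \in M$ with $\rho(mx,my) > 0$, which reads as $\rho_Y(mx,my) > 0$ because $mx, my \in Y$. In particular, the system $(Y,M)$ admits a dynamically generating continuous pseudometric, so the quantity $h_\Sigma(Y,M)$ is well-defined.

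Next, I would invoke Proposition~\ref{p:h-rho-h-rho'}, which guarantees that the sofic topological entropy does not depend on the choice of dynamically generating continuous pseudometric. Hence
\begin{equation*}
h_\Sigma(X,M) = h_\Sigma(X,M,\rho) \quad \text{and} \quad h_\Sigma(Y,M) = h_\Sigma(Y,M,\rho_Y),
\end{equation*}
so the desired inequality reduces to $h_\Sigma(Y,M,\rho_Y) \leq h_\Sigma(X,M,\rho)$.

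Finally, I would apply Proposition~\ref{p:st-ent-subsystem} to the compact space $X$, its continuous pseudometric $\rho$, the sofic approximation $\Sigma$, and the closed $M$-invariant subset $Y$, using the convention of Example~\ref{e:rho-X-rho-Y} to identify $\rho_Y$ with the restriction of $\rho$. This yields exactly the required inequality. There is no substantive obstacle beyond verifying that $\rho_Y$ inherits both continuity and the dynamically generating property from $\rho$, which is immediate; the corollary is then a direct specialization of Proposition~\ref{p:st-ent-subsystem} to the setting of strong sofic approximations of an infinite strongly sofic monoid.
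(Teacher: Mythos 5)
Your proof is correct and follows essentially the same route as the paper: restrict $\rho$ to $Y$ (noting via Example~\ref{e:rho-X-rho-Y} that the restriction is a dynamically generating continuous pseudometric), use the independence from Proposition~\ref{p:h-rho-h-rho'} to express both entropies via $\rho$, and conclude by Proposition~\ref{p:st-ent-subsystem}. No issues.
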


\begin{proof}
If $\rho$ is a dynamically generating continuous pseudometric for $(X,M)$, then we have
$h_\Sigma(Y,M) = h_\Sigma(Y,M,\rho) \leq h_\Sigma(X,M,\rho) = h_\Sigma(X,M)$.
\end{proof}

\section{Sofic topological entropy of shifts and subshifts}
\label{s:ste-ss}
Let $M$ be a sofic monoid, let $A$ be a finite set, and let $X \subset A^M$ be a subshift.
Consider the dynamically generating continuous pseudometric $\rho$ defined by~\eqref{e:pseudo-met-shift}.
Given a non-empty finite set $D$, for all $\varphi, \psi \in X^D$ we have
\begin{equation}
\label{e:rho-2-for-subshifts}
\rho_2^D(\varphi,\psi) = \left(\frac{1}{|D|}\left| \{v \in D : \varphi(v)(1_M) \not=  \psi(v)(1_M) \} \right|\right)^{1/2}
\end{equation}
and
\begin{equation}
\label{e:rho-infty-for-subshifts}
\rho_\infty^D(\varphi,\psi) =
\begin{cases}
0 &\text{ if } \varphi(v)(1_M) = \psi(v)(1_M) \text{ for all } v \in D,\\
1 & \text{ otherwise.}
\end{cases}
\end{equation}

It follows that, given a finite subset $F \subset M$, $\delta> 0$, and $\sigma \colon M \to \Map(D)$,
a map $\varphi \in X^D$ belongs to $\Map(X,M,\rho,F,\delta,\sigma)$ (cf.\ \eqref{e:def-MAP}) 
if and only if for each $m \in F$ the set
\begin{equation}
\label{e:W-F-varphi-m}
W_{\varphi,m} \coloneqq \{v \in D: \left(\varphi(\sigma(m)(v))\right)(1_M) \neq \varphi(v)(m)\}
\end{equation}
satisfies
\begin{equation}
\label{e:W-F-varphi-card-m}
\vert W_{\varphi,m} \vert \leq |D| \delta^2.
\end{equation}
Setting
\begin{equation}
\label{e:W-F-varphi}
W_\varphi \coloneqq \bigcup_{m \in F} W_{\varphi,m} = \{v \in D: \left(\varphi(\sigma(m)(v))\right)(1_M) \neq \varphi(v)(m) \mbox{ for some $m \in F$}\},
\end{equation}
we deduce that if $\varphi \in X^D$ belongs to $\Map(X,M,\rho,F,\delta,\sigma)$ then
\begin{equation}
\label{e:W-F-varphi-card}
\vert W_\varphi \vert < |F|\cdot |D| \delta^2.
\end{equation}

\begin{lemma}
\label{l:entropie-subshift}
Let $M$ be a sofic monoid, let $\Sigma = (D_i,\sigma_i)_{i \in I}$ be a sofic approximation of $M$, and let $A$ be a finite set.
Suppose that $X \subset A^M$ is a subshift.
Then one has
\begin{equation}
\label{e:entropie-subshift}
h_\Sigma(X,M,\rho) \leq \log |A|.
\end{equation}
\end{lemma}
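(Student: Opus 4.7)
The plan is to exploit the very specific form of the pseudometric $\rho$ on the shift and the induced $\rho_\infty^D$ on $X^D$. By \eqref{e:rho-infty-for-subshifts}, $\rho_\infty^D$ takes only the values $0$ and $1$ on $X^D$: it equals $0$ when $\varphi(v)(1_M) = \psi(v)(1_M)$ for all $v \in D$, and equals $1$ otherwise. In particular, two configurations in $X^D$ are at $\rho_\infty^D$-distance $> 0$ from each other if and only if their ``evaluation-at-$1_M$'' coordinates differ somewhere in $D$.

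This suggests introducing the evaluation map $e_D \colon X^D \to A^D$ defined by $e_D(\varphi)(v) \coloneqq \varphi(v)(1_M)$, and showing that its restriction to any $(\rho_\infty^D,\varepsilon)$-separated subset $Z \subset X^D$ is injective. Indeed, if $\varphi, \psi \in Z$ are distinct, then $\rho_\infty^D(\varphi,\psi) \geq \varepsilon > 0$, and since $\rho_\infty^D$ takes only the values $0$ and $1$ on $X^D$, we must have $\rho_\infty^D(\varphi,\psi) = 1$. By the observation above this forces $e_D(\varphi) \neq e_D(\psi)$. Hence $|Z| \leq |A^D| = |A|^{|D|}$, which yields the uniform bound
\[
N_\varepsilon(\Map(X,M,\rho,F,\delta,\sigma),\rho_\infty^D) \leq |A|^{|D|}
\]
for every finite subset $F \subset M$, every $\delta > 0$, every non-empty finite set $D$, and every map $\sigma \colon M \to \Map(D)$. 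Note that this holds irrespective of any constraint coming from the set $\Map(X,M,\rho,F,\delta,\sigma)$, since we just use that it is a subset of $X^D$.

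Specializing to $D = D_i$ and $\sigma = \sigma_i$, taking logarithms and dividing by $|D_i|$ gives
\[
\frac{1}{|D_i|}\log N_\varepsilon(\Map(X,M,\rho,F,\delta,\sigma_i),\rho_\infty^{D_i}) \leq \log |A|
\]
for every $i \in I$. Passing successively to $\limsup_{i \in I}$, $\inf_{\delta > 0}$, $\inf_F$, and $\sup_{\varepsilon > 0}$ (each of these operations preserving the uniform upper bound $\log |A|$) we obtain $h_\Sigma(X,M,\rho) \leq \log |A|$, as required. I do not expect a significant obstacle: the whole argument rests on the discrete ($\{0,1\}$-valued) nature of $\rho_\infty^D$ on $X^D$, which reduces the counting problem to the trivial enumeration of maps from $D_i$ into the finite alphabet $A$.
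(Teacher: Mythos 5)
Your proposal is correct and follows essentially the same route as the paper: the $\{0,1\}$-valued nature of $\rho_\infty^D$ on $X^D$ makes the evaluation map $\varphi \mapsto (\varphi(v)(1_M))_{v \in D}$ injective on any $(\rho_\infty^D,\varepsilon)$-separated set, giving the uniform bound $|A|^{|D|}$, after which taking logarithms and the successive limits yields \eqref{e:entropie-subshift}. The only cosmetic difference is that the paper phrases the injectivity as an equivalence for $0<\varepsilon<1$, while you derive it directly for arbitrary $\varepsilon>0$; both are fine.
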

\begin{proof}
It follows from \eqref{e:rho-infty-for-subshifts} that for any $0 < \varepsilon < 1$ and any finite non-empty set $D$, 
a subset $Z \subset X^D$ is $(\rho_\infty^D,\varepsilon)$-separated
if and only if the map $Z \to A^D$ given by $\varphi \mapsto (\varphi(v)(1_M))_{v \in D}$ is injective.
Thus, given a finite subset $F \subset M$, $\delta > 0$, and a map $\sigma \colon M \to \Map(D)$,
taking $Z$ as a maximal $(\rho_\infty^D,\varepsilon)$-separated subset in $\Map(X,M,\rho,F,\delta,\sigma)$ we have
\begin{equation}
\label{e:seconde-inegalite}
N_\varepsilon(\Map(X,M,\rho,F,\delta,\sigma),\rho_\infty^D) = |Z| \leq |A^D| =  |A|^{|D|}.
\end{equation}
Taking logarithms we deduce that for the sofic approximation $\Sigma = (D_i,\sigma_i)_{i \in I}$ of $M$ one has
\begin{align*}
h_\Sigma(X,M,\varepsilon,\rho,F,\delta) 
& = \limsup_{i \in I}\frac{1}{|D_i|} \log N_\varepsilon(\Map(X,M,\rho,F,\delta,\sigma_i),\rho_\infty^D)
\\ & \leq \limsup_{i \in I}\frac{1}{|D_i|} \log |A|^{|D_i|} \\
& = \log |A|,
\end{align*}
and \eqref{e:entropie-subshift} follows.
\end{proof}

The following result extends Proposition~10.28 in~\cite{kerr-li-book}.

\begin{lemma}
\label{l:top-ent-full-shift}
Let $M$ be a sofic monoid, let $\Sigma = (D_i,\sigma_i)_{i \in I}$ be a sofic approximation of $M$, and let $A$ be a finite set.
Then one has 
\begin{equation}
\label{e:entropie-full-subshift}
h_\Sigma(A^M,M,\rho) = \log |A|.
\end{equation}
\end{lemma}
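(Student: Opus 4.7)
The inequality $h_\Sigma(A^M,M,\rho) \leq \log|A|$ is already supplied by Lemma~\ref{l:entropie-subshift}, so the whole content is the reverse inequality $h_\Sigma(A^M,M,\rho) \geq \log |A|$. My plan is to exhibit, for every index $i \in I$, an explicit family of $|A|^{|D_i|}$ maps in $\Map(A^M,M,\rho,F,\delta,\sigma_i)$ that are pairwise $(\rho_\infty^{D_i},\varepsilon)$-separated for every $0 < \varepsilon \leq 1$, and such that moreover these maps sit in $\Map(A^M,M,\rho,F,\delta,\sigma_i)$ for every finite $F \subset M$ and every $\delta > 0$ simultaneously.

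The construction is: for each function $f \in A^{D_i}$, define $\varphi_f \colon D_i \to A^M$ by setting
\[
\varphi_f(v)(m) \coloneqq f(\sigma_i(m)(v)) \qquad \text{for all } v \in D_i \text{ and } m \in M.
\]
Using (SA1), one checks immediately that $\varphi_f(v)(1_M) = f(v)$, and, for every $m \in M$ and $v \in D_i$,
\[
(\varphi_f(\sigma_i(m)(v)))(1_M) = f(\sigma_i(1_M)(\sigma_i(m)(v))) = f(\sigma_i(m)(v)) = \varphi_f(v)(m).
\]
In the notation of \eqref{e:W-F-varphi-m} this says $W_{\varphi_f,m} = \emptyset$ for every $m \in M$, and therefore \eqref{e:W-F-varphi-card-m} holds trivially. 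Hence $\varphi_f \in \Map(A^M,M,\rho,F,\delta,\sigma_i)$ for every finite subset $F \subset M$ and every $\delta > 0$.

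Next I argue that the map $f \mapsto \varphi_f$ is injective and that any two distinct images are $(\rho_\infty^{D_i},\varepsilon)$-separated for all $\varepsilon \leq 1$. Indeed, if $f \neq g$ then there exists $v \in D_i$ with $f(v) \neq g(v)$, which by the identity $\varphi_f(v)(1_M) = f(v)$ together with \eqref{e:rho-infty-for-subshifts} gives $\rho_\infty^{D_i}(\varphi_f,\varphi_g) = 1 \geq \varepsilon$. Thus $\{\varphi_f : f \in A^{D_i}\}$ is a $(\rho_\infty^{D_i},\varepsilon)$-separated subset of $\Map(A^M,M,\rho,F,\delta,\sigma_i)$ of cardinality $|A|^{|D_i|}$.

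Putting everything together,
\[
N_\varepsilon(\Map(A^M,M,\rho,F,\delta,\sigma_i),\rho_\infty^{D_i}) \geq |A|^{|D_i|}
\]
for every finite $F \subset M$, every $\delta > 0$, and every $0 < \varepsilon \leq 1$; taking logarithms, dividing by $|D_i|$, and then taking $\limsup_{i \in I}$, $\inf_\delta$, $\inf_F$, and $\sup_{\varepsilon>0}$ yields $h_\Sigma(A^M,M,\rho) \geq \log|A|$, which combined with Lemma~\ref{l:entropie-subshift} proves \eqref{e:entropie-full-subshift}. There is really no main obstacle here: the only thing one has to be careful about is that the definition of $\varphi_f$ uses $\sigma_i$ on the whole of $M$, not just on a finite subset, so that $\varphi_f(v)$ really is an element of $A^M$; the identity $W_{\varphi_f,m} = \emptyset$ depends only on (SA1) and requires none of the asymptotic approximation properties (SA2), (SA3) of the sofic approximation, which is why the argument works uniformly in $F$ and $\delta$.
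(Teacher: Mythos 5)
Your proposal is correct and is essentially the paper's own argument: the paper likewise associates to each $\omega \in A^{D_i}$ the map $\varphi_\omega(v)(m) \coloneqq \omega(\sigma_i(m)(v))$, uses (SA1) to get separation via the value at $1_M$, and observes that $\rho_2^{D_i}(\varphi_\omega \circ \sigma_i(s), s\varphi_\omega) = 0$ (your $W_{\varphi_f,m} = \emptyset$ formulation is the same fact), yielding $N_\varepsilon \geq |A|^{|D_i|}$ and hence the lower bound, with the upper bound from Lemma~\ref{l:entropie-subshift}. No gaps; your remark that only (SA1), not (SA2)--(SA3), is needed matches the paper's proof as well.
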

\begin{proof}
Set $X \coloneqq A^M$. Let $F \subset M$ be a finite subset, let $0 < \varepsilon < 1$, let $\delta > 0$,
and let $\sigma \colon M \to \Map(D)$ satisfy Conditions (SM1), (SM2), and (SM3), where $D$ is a non-empty finite set. 

With each $\omega \in A^D$ let us associate $\varphi_\omega \in X^D$ by setting
\[
\varphi_\omega(v)(m) \coloneqq \omega(\sigma(m)(v))
\]
for all $v \in D$ and $m \in M$. 

Note that the map $\omega \mapsto \varphi_\omega$ is injective and that the set $Z \coloneqq \{\varphi_\omega: \omega \in A^D\}$
is $(\rho_\infty^D,\varepsilon)$-separated.
Indeed, if  $\omega_1, \omega_2 \in A^D$ are distinct, then there exists $v \in D$ such that $\omega_1(v) \neq \omega_2(v)$.
As $\sigma(1_M) = \Id_D$ by (SM1), we have
$\varphi_{\omega_1}(v)(1_M) = \omega_1(\sigma(1_M)(v)) =  \omega_1(v) \neq \omega_2(v) = \omega_2(\sigma(1_M)(v)) =
\varphi_{\omega_2}(v)(1_M)$, so that $\rho_\infty^D(\varphi_{\omega_1},\varphi_{\omega_2}) = 1 > \varepsilon$ 
(cf.\ \eqref{e:rho-infty-for-subshifts}).

Moreover, if $v \in D$, we have
\[
\varphi_\omega(\sigma_s(v))(1_M) = \omega(\sigma_{1_M}(\sigma_s(v))) = \omega(\sigma_s(v)) = \varphi_\omega(v)(s) = (s \varphi_\omega(v))(1_M)
\]
so that (cf.\ \eqref{e:rho-2-for-subshifts}),
\[
\rho_2^D(\varphi_\omega \sigma_s, s \varphi_\omega) = 0
\]
for all $\omega \in A^D$ and $s \in F$. 
We deduce that $Z = \{\varphi_\omega: \omega \in A^D\} \subset \Map(X,M,\rho,F,\delta,\sigma)$ and, therefore, keeping in mind that
$Z$ is $(\rho_\infty^D, \varepsilon)$-separated,
\begin{equation}
\label{e:premiere-inegalite}
|A|^{|D|} = |A^D| = |\{\varphi_\omega: \omega \in A^D\}| \leq N_\varepsilon(\Map(X,M,\rho,F,\delta,\sigma),\rho_\infty^D).
\end{equation}
Taking logarithms we deduce that for the sofic approximation $\Sigma = (D_i,\sigma_i)_{i \in I}$ of $M$ one has
\begin{align*}
h_\Sigma(X,M,\varepsilon,\rho,F,\delta) & = \limsup_{i \in I}\frac{1}{|D_i|} \log N_\varepsilon(\Map(X,M,\rho,F,\delta,\sigma_i),\rho_\infty^D)
\\ & \geq \limsup_{i \in I}\frac{1}{|D_i|} \log |A|^{|D_i|} \\
& = \log |A|
\end{align*}
and, consequently, $h_\Sigma(X,M,\rho) \geq \log |A|$.
\par
On the other hand, by virtue of Lemma~\ref{l:entropie-subshift}, we have $h_\Sigma(X,M, \rho) \leq \log |A|$, and \eqref{e:entropie-full-subshift} follows.
\end{proof}

The following result extends Proposition 10.29 in \cite{kerr-li-book}.

\begin{theorem} 
\label{t:monotonicity}
Let $M$ be an infinite strongly sofic monoid and let $\Sigma=(D_i, \sigma_i)_{i \in I}$ be a strong sofic approximation of $M$.  
Let $A$ be a finite set and let $X \subset A^M$ be a subshift.
Then one has $h_\Sigma(X,M) \leq \log |A|$. 
Moreover, one has $h_\Sigma(X,M) = \log |A|$ if and only if $X = A^M$.
\end{theorem}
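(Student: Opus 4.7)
The plan is to establish both parts of the theorem. First, I handle the easy parts. The inequality $h_\Sigma(X,M) \leq \log|A|$ follows immediately from Lemma~\ref{l:entropie-subshift} applied with the dynamically generating continuous pseudometric $\rho$ from \eqref{e:pseudo-met-shift}, together with the fact that, by Proposition~\ref{p:h-rho-h-rho'}, the quantity $h_\Sigma(X,M)$ does not depend on the choice of dynamically generating continuous pseudometric. In the case $X = A^M$, Lemma~\ref{l:top-ent-full-shift} directly gives $h_\Sigma(A^M,M,\rho) = \log|A|$, hence $h_\Sigma(A^M,M) = \log|A|$. The substance is therefore the converse implication: if $X \subsetneqq A^M$, then $h_\Sigma(X,M) < \log|A|$.

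For this, first produce a forbidden pattern. Since $X$ is closed and properly contained in $A^M$, an open basis argument in the prodiscrete topology yields a finite subset $F_0 \subset M$ (which may be taken to contain $1_M$) and a pattern $p \in A^{F_0} \setminus X_{F_0}$. Fix $0 < \varepsilon < 1$. For a map $\sigma \colon M \to \Map(D)$ and any $\varphi \in \Map(X,M,\rho,F_0,\delta,\sigma)$, define $\omega_\varphi \in A^D$ by $\omega_\varphi(v) \coloneqq \varphi(v)(1_M)$. By \eqref{e:rho-infty-for-subshifts}, the assignment $\varphi \mapsto \omega_\varphi$ is injective on any $(\rho_\infty^D,\varepsilon)$-separated subset of $\Map(X,M,\rho,F_0,\delta,\sigma)$, so it suffices to bound the number of $\omega$ in the image. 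Setting
\[
B_\omega \coloneqq \{v \in D : (\omega(\sigma(m)(v)))_{m \in F_0} = p\},
\]
the definition of $W_\varphi$ in \eqref{e:W-F-varphi}, the bound \eqref{e:W-F-varphi-card}, and the fact that $\varphi(v)|_{F_0} \in X_{F_0}$ together force $B_{\omega_\varphi} \subset W_\varphi$ and hence $|B_{\omega_\varphi}| \leq |F_0|\,|D|\,\delta^2$.

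To count such $\omega$, I exploit the strong sofic structure. Combining (SA3) and (SA4) applied to $K = F_0$, for $i$ sufficiently large the set
\[
D_i^\dagger \coloneqq \{v \in D_i : \sigma_i(m)(v) \neq \sigma_i(m')(v) \text{ for all distinct } m,m' \in F_0\}
\]
has density in $D_i$ arbitrarily close to $1$. Inside $D_i^\dagger$, a greedy selection produces a subset $V_i$ whose translate-sets $\{\sigma_i(m)(v) : m \in F_0\}$, $v \in V_i$, are pairwise disjoint and for which $|V_i| \geq |D_i|/(2|F_0|^2 \Delta_{F_0})$: each chosen vertex $v$ only forbids those $v'$ satisfying $\sigma_i(m')(v') = \sigma_i(m)(v)$ for some $(m,m') \in F_0 \times F_0$, and by (SA4) there are at most $|F_0|^2 \Delta_{F_0}$ such $v'$.

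Once $V_i$ is produced, the events $\{v \in B_\omega\}_{v \in V_i}$ depend on pairwise disjoint blocks of coordinates of $\omega$, so they are independent under the uniform distribution on $A^{D_i}$, each of probability $|A|^{-|F_0|}$. Choosing $\delta$ small enough that $|F_0|\,\delta^2 \cdot |D_i|/|V_i| < |A|^{-|F_0|}/2$, the condition $|B_\omega| \leq |F_0|\,|D_i|\,\delta^2$ becomes a large-deviation event, and an elementary binomial counting estimate of Stirling type (Lemma~\ref{lemma:Stirling}) yields
\[
\#\{\omega \in A^{D_i} : |B_\omega| \leq |F_0|\,|D_i|\,\delta^2\} \leq |A|^{|D_i|}\, e^{-\kappa\,|V_i|}
\]
for some $\kappa > 0$ depending only on $|A|$, $|F_0|$, and $\Delta_{F_0}$. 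Taking logarithms, dividing by $|D_i|$, and then passing to $\limsup_{i \in I}$, $\inf_\delta$, $\inf_F$, and $\sup_\varepsilon$ yields $h_\Sigma(X,M) \leq \log|A| - \kappa/(2|F_0|^2 \Delta_{F_0}) < \log|A|$. The main obstacle is the combinatorial construction of $V_i$ together with the accompanying sharp counting estimate: one must keep the exponent $\kappa$ uniform in $\delta$ as $\delta \to 0$ in order for the bound to survive the $\inf_\delta$ built into the definition of sofic topological entropy, and this is where the bounded-preimage axiom (SA4) — absent in the merely sofic case — enters decisively.
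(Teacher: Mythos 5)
Your proposal is correct, and its overall strategy is the one the paper uses (reduce to counting the read-outs $\omega_\varphi \in A^{D_i}$, use the bounded-fibre condition (SM4)/(SA4) to extract a linear-size family of pairwise disjoint, faithfully embedded copies of $F_0$, and exploit that the forbidden pattern can occur on only a $\delta$-small fraction of positions), but you organize the key counting step genuinely differently. The paper stratifies the separated set $Z$ according to the exceptional set $W(\varphi)=W_\varphi\cup V$: for each admissible $W$ it builds a $W$-dependent quasi-tiling $U_W\subset D\setminus W$ on which the forbidden pattern \emph{never} occurs, so each stratum has at most $|A|^{|D|}\bigl(1-|A|^{-|F|}\bigr)^{|U_W|}$ elements, and Lemma~\ref{lemma:Stirling} is then used to bound the number of strata by $e^{\beta|D|}$. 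You instead fix a single $\varphi$-independent quasi-tiling $V_i$ inside the injectivity locus $D_i^\dagger$ and allow the forbidden pattern on up to $s=|F_0|\,|D_i|\,\delta^2$ of its blocks, turning the count into a binomial lower tail: $\#\{\omega : |B_\omega\cap V_i|\le s\} = |A|^{|D_i|}\Pr\bigl[\mathrm{Bin}(|V_i|,|A|^{-|F_0|})\le s\bigr]$, and this does decay like $e^{-\kappa|V_i|}$ with $\kappa$ depending only on $|A|,|F_0|$ once $s/|V_i|$ is small enough; one can get it from Lemma~\ref{lemma:Stirling} exactly in your spirit (bound $q^j\le 1$, pull out $(1-q)^{|V_i|-s}$, and choose $\delta$ so small that $\beta(s/|V_i|)\le \tfrac14\log\tfrac{1}{1-q}$, which is legitimate since $s/|V_i|\le 2|F_0|^3\Delta_{F_0}\delta^2$). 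Your route buys a single clean decomposition with no union over exceptional sets $W$ (one use of the Stirling-type lemma instead of one per stratification), at the cost of a small large-deviation computation; the paper's route keeps every count exact and avoids probabilistic language. Two minor corrections: the density of $D_i^\dagger$ needs only (SA3) ((SA4) enters in the greedy selection of $V_i$, where $|D_i|/|V_i|\le 2|F_0|^2\Delta_{F_0}$ lets you fix $\delta$ independently of $i$); and the uniformity to worry about at the end is not in $\delta$ — the $\inf_\delta$ in the definition only helps, a single good $\delta$ suffices — but in $\varepsilon$, and since your final bound $\log|A|-\kappa/(2|F_0|^2\Delta_{F_0})$ is independent of $\varepsilon$, the concluding $\sup_{\varepsilon>0}$ goes through.
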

\begin{proof}
Since $h_\Sigma(X,M) \leq \log |A|$ by Lemma~\ref{l:entropie-subshift} and $h_\Sigma(A^M,M) = \log |A|$ by Lemma~\ref{l:top-ent-full-shift}, we only need to prove that if $X \subsetneqq A^M$ then $h_\Sigma(X,M) < \log |A|$.
Thus, suppose that $X \subsetneqq A^M$. Then there exists a finite subset $F \subset M$
and an $X$-forbidden pattern $p \in A^F \setminus X_F$. After enlarging $F$ if necessary, we may suppose that $|F| \geq 2$.
\par
Let $0 < \delta \leq 1/(2(|F|+1))$ and let $0 < \varepsilon \leq \delta/\binom{|F|}{2}$ (cf.\ Remark~\ref{r:entropie-epsilon-limite}).
Since $M$ is strongly sofic, we can find an integer $\Delta_F \geq 1$ (not depending on $\varepsilon$), a non-empty finite set $D$, and a map $\sigma \colon M \to \Map(D)$ satisfying Conditions (SM1)-(SM4) relative to the finite subset $K \coloneqq F$ and $\varepsilon$. 
\par
Choose a $(\rho_\infty^D,\varepsilon)$-separated subset $Z \subset \Map(X,M,\rho,F,\delta,\sigma)$
of maximal cardinality, so that $|Z| = N_\varepsilon(\Map(X,M,\rho,F,\delta,\sigma),\rho_\infty^D)$.
With every $\varphi \in Z$, let us associate the element $\omega_\varphi \in A^D$ defined by 
$\omega_\varphi(v) \coloneqq \varphi(v)(1_M)$ for all $v \in D$.
If $\varphi, \psi \in Z$ are distinct, then, as $Z$ is $(\rho_\infty^D, \varepsilon)$-separated, 
there exists $v \in D$ such that $\varphi(v)(1_M) \neq \psi(v)(1_M)$, that is, $\omega_\varphi(v) \neq \omega_\psi(v)$. 
This shows that the map $Z \to A^D$, given by $\varphi \mapsto \omega_\varphi$, is injective.
\par
Let $V \subset D$ denote the set of all $v \in D$ such that the map $F \to D$, given by $s \mapsto \sigma(s)(v)$, is not injective.
We deduce from (SM3) that
\begin{align*}
|V| & \leq \sum_{\substack{s,t \in F\\s \neq t}} |\{v \in D : \sigma(s)(v)  = \sigma(t)(v)\}|  \\
& = \sum_{\substack{s,t \in F\\s \neq t}} \left(1 -  d^\Ham_D (\sigma(s), \sigma(t)) \right) |D|  \\
& \leq \binom{|F|}{2} \varepsilon |D| \leq \delta |D|.
\end{align*}
Consequently, we have
\begin{equation}
\label{e:ubound-for-V}
|V| \leq \delta |D|.
\end{equation}
\par
Consider now the set $\WW$ consisting of all subsets $W \subset D$ such that $|W| \leq (|F|+1)\delta |D|$ and $V \subset W$. 
Note that from our choice of $\delta$ we deduce that for all $W \in \WW$, we have 
\begin{equation}
\label{e:bound-cardinal-de-D-W}
|D \setminus W| = |D| - |W| \geq |D| - (|F|+1)\delta |D| \geq  \frac{|D|}{2}. 
\end{equation}

Fix $W \in \WW$.  
Take a subset $U_W \subset D \setminus W$ of maximal cardinality such that $\sigma(F)(u) \cap \sigma(F)(v) = \varnothing$ for all distinct
$u,v \in U_W$. 
We claim that $\left(\sigma(F)^{-1}(\sigma(F)(u))\right)_{u \in U_W}$ covers $D \setminus W$ (here we use the notation introduced in 
\eqref{e:inverse-set-map}). Indeed, let $v \in D \setminus W$. Suppose that
$v \notin \bigcup_{u \in U_W} \sigma(F)^{-1}(\sigma(F)(u))$, that is, $v \notin \sigma(f_1)^{-1}(\sigma(f_2)(u))$ for all $f_1, f_2 \in F$
and $u \in U_W$. Then, for all $u \in U_W$ we have $\sigma(f_1)(v) \neq \sigma(f_2)(u)$ for all $f_1, f_2 \in F$, that is, 
$\sigma(F)(v) \cap \sigma(F)(u) = \varnothing$, contradicting the maximality of $U_W$. The claim follows.
\par
Combining with the inequalities $|\sigma(s)^{-1}(v)|\leq \Delta_F$ for all $s \in F$ and $v \in D$ (cf.\ Condition (SM4)), we find that 
\begin{align*}
|D\setminus W| \leq \sum_{u \in U_W} |\sigma(F)^{-1}(\sigma(F)(u))| \leq \sum_{u \in U_W}  \sum_{s \in F}  \sum_{t \in F}  
|\sigma(s)^{-1}(\sigma(t)(u))|\leq |U_W| \cdot|F|^2 \Delta_F. 
\end{align*}
Keeping in mind \eqref{e:bound-cardinal-de-D-W}, we obtain 
\begin{equation}
\label{e:cardinal-U}
|U_W| \geq \frac{|D\setminus W|}{|F|^2\Delta_F} \geq \frac{|D|}{2|F|^2 \Delta_F}.
\end{equation}

Define $Z_W$ to be the set of all $\varphi \in Z$ such that
\begin{equation}
\label{e:def-z-varphi}
\left(\varphi(\sigma(s)(v))\right)(1_M) = \left(s\varphi(v)\right)(1_M) \ \ \mbox{ for all $v \in D \setminus W$ and all $s \in F$}.
\end{equation}
Thus, if $\varphi \in Z_W$ and $u \in U_W$, the pattern $q = q_{\varphi, u} \in A^F$ defined by 
\[
q(s) \coloneqq \omega_\varphi(\sigma(s)(u))
\]
satisfies
\[
q(s) = \omega_\varphi(\sigma(s)(u)) = \varphi(\sigma(s)(u))(1_M) = (s\varphi(u))(1_M) = \varphi(u)(s)
\]
for all $s \in F$ (cf.\ \eqref{e:def-z-varphi}), showing that 
$q = \varphi(u)\vert_F \in X_F$. Therefore $q \neq p$ (the forbidden $X$-pattern in the beginning of the proof).

As a consequence, given $u \in U_W$ we have 
\begin{equation}
\label{e:new}
|\{\omega_\varphi\vert_{\sigma(F)(u)}: \varphi \in Z_W\}| = |\{q_{\varphi, u}: \varphi \in Z_W\}| \leq |A^F \setminus \{p\}| = |A^F|-1 = 
|A|^{|F|}-1.
\end{equation} 
Since $U_W \subset D \setminus W \subset D \setminus V$, the map $F \to D$, given by $s \mapsto \sigma(s)(u)$, 
is injective for all $u \in U_W$.
Consequently, we have $|\sigma(F)(u)| = |F|$ for all $u \in U_W$. 
As the subsets $\sigma(F)(u)$, $u \in U_W$, are mutually disjoint, it follows that
\begin{equation}
\label{e:card-sigmaFUW}
|\sigma(F)(U_W)| = |F|\cdot |U_W|.
\end{equation}
We deduce that
\[
|\{\omega_\varphi\vert_{\sigma(F)(U_W)}: \varphi \in Z_W\}| \leq (|A|^{|F|}-1)^{|U_W|}.
\]
From the injectivity of the map $Z_W \to A^D = A^{D \setminus \sigma(F)(U_W)} \times A^{\sigma(F)(U_W)}$ given by 
\[
\varphi \mapsto \omega_\varphi = (\omega_\varphi\vert_{A^{D \setminus \sigma(F)(U_W)}}, \omega_\varphi\vert_{A^{\sigma(F)(U_W)}}),
\] 
and using \eqref{e:card-sigmaFUW}, we then deduce the estimate
\[
|Z_W| \leq |A|^{|D| - |F|\cdot|U_W|} (|A|^{|F|}-1)^{|U_W|} = |A|^{|D|} (1-|A|^{-|F|})^{|U_W|}.
\]
By virtue of \eqref{e:cardinal-U}, we have that 
\begin{equation}
\label{e:upper-bound-Z-W}
|Z_W| \leq |A|^{(1 - \beta_0)|D|},
\end{equation}
where $\beta_0 \displaystyle \coloneqq -\frac{\log_{|A|}(1-|A|^{-|F|})}{2\Delta_F |F|^2} > 0$. 
Note that $\beta_0$ is independent of $W, \delta, D$, and $\sigma$.

Let $\varphi \in Z$. As $\delta < 1$, from \eqref{e:W-F-varphi-card} we deduce that $|W_\varphi| \leq |F|\delta |D|$.
As $|V| \leq \delta |D|$ by~\eqref{e:ubound-for-V}, we have that $W(\varphi) \coloneqq W_\varphi \cup V$ satisfies
$|W(\varphi)| \leq (|F|+1)\delta|D|$, so that $W(\varphi) \in \WW$.
Let $v \in D \setminus W(\varphi) \subset D \setminus W_\varphi$. From \eqref{e:W-F-varphi} it follows that
$\varphi(\sigma(m)(v))(1_M) = \varphi(v)(m)$ for all $m \in F$. From \eqref{e:def-z-varphi} we deduce
that $\varphi \in Z_{W(\varphi)}$. It follows that
\begin{equation}
\label{e:Z-union-ZW}
Z = \bigcup_{W \in \WW} Z_W.
\end{equation}
\par
Setting $d \coloneqq |D|$ and $t \coloneqq (|F|+1)\delta$, we have $0 < t <1/2$ so that, by applying Lemma~\ref{lemma:Stirling},
there exist $d_0 = d_0(t) > 0$ and $\beta = \beta(t) > 0$, with $\beta(t) \to 0$ as $\delta \to 0$,
such that the number of all subsets $W \subset D$ with cardinality  $|W| \leq \floor{(|F|+1)\delta |D|} = \floor{t d}$, is bounded above, provided $D$ is large enough (namely, $|D| = d \geq d_0$), by $e^{\beta |D|}$.
\par
From \eqref{e:Z-union-ZW} and \eqref{e:upper-bound-Z-W} we then deduce the estimate
\[
|Z| = \vert \bigcup_{W \in \WW} Z_W \vert \leq e^{\beta |D|} |A|^{(1 - \beta_0)|D|}.
\]
Recalling that $Z$ is a maximal $(\rho_\infty^D,\varepsilon)$-separated subset of $\Map(X,M,\rho,F,\delta,\sigma)$, we then have
\[ 
\frac{1}{|D|}\log N_\varepsilon(\Map(X,M,\rho,F,\delta,\sigma), \rho_\infty^D) = \frac{1}{|D|}\log |Z| \leq \beta + (1 - \beta_0) \log |A|.
\]
Taking the infimum over $\delta$ (equivalently, $\lim_{\delta \to 0}$, by Remark~\ref{r:entropie-epsilon-limite}), over $F$, and then the supremum over $\varepsilon$ (equivalently, $\lim_{\varepsilon \to 0}$, by Remark~\ref{r:entropie-epsilon-limite}), and keeping in mind that 
$\beta(t) \to 0$ as  $t \to 0$, we see that $h_\Sigma(X,M) = h_\Sigma(X,M,\rho) \leq (1 - \beta_0) \log |A| < \log|A|$.
\end{proof}

We are now in a position to prove the main result of our paper.

\begin{proof}[Proof of Theorem \ref{t:main}]
Let $M$ be a strongly sofic monoid. Let $A$ be a finite set and let $\tau \colon A^M \to A^M$ be an injective cellular automaton.
We have to show that $\tau$ is surjective.
\par
If $M$ is a finite monoid then $A^M$ is a finite set and the result is trivial.
Thus, we may assume that $M$ is infinite.
Set $X \coloneqq \tau(A^M)$. Then $X$ is a subshift of $A^M$ and $\tau$ yields a topological conjugacy between the dynamical systems
$(A^M,M)$ and $(X,M)$. Consequently, if $\Sigma$ is a strong sofic approximation of $M$, we have $h_\Sigma(A^M,M) = h_\Sigma(X,M)$ 
by Theorem~\ref{t:topo-invariant}. It then follows from Theorem~\ref{t:monotonicity} that $X = A^M$.
Therefore $\tau$ is surjective.
\end{proof}

The strongly sofic monoid in Example~\ref{ex:no-group} is right-cancellative but not left-cancellative.
By classical results of Malcev \cite{malcev1, malcev2} (see also \cite{edwardes-heath}), 
there exist cancellative monoids which are not embeddable in a group.
This motivates the following question, for which we have not been able to provide an answer.

\begin{question}
Does there exist a cancellative strongly sofic monoid which is not embeddable in a group?
\end{question}


\end{document}